\newcommand{\Ext}{{\rm Ext}}
\newcommand{\Hom}{{\rm Hom}}
\newcommand{\R}{{\mathbb R}}
\newcommand{\F}{{\mathbb F}}
\newcommand{\Z}{\mathbb{Z}}
\newcommand{\tr}{\odot}
\newcommand{\si}{{\mathcal S}}
\newcommand{\A}{\mathcal A}
\newcommand{\Sq}{{Sq}}
\newcommand{\ua}{\mathcal{U}\mathcal{A}}
\newcommand{\ind}{{\rm Ind}\;}
\newcommand{\cu}{\mathcal U}
\newcommand{\g}{\gamma}
\newcommand{\lm}{\lambda}
\newcommand{\bi}{\overline{\iota}}
\newcommand{\coker}{{\rm coker}}
\newcommand{\im}{{im}}
\theoremstyle{plain}
\newtheorem{theorem}{Theorem}[section]
\newtheorem{proposition}[theorem]{Proposition}
\newtheorem{corollary}[theorem]{Corollary}
\newtheorem{conjecture}[theorem]{Conjecture}
\theoremstyle{definition}
\newtheorem{definition}[theorem]{Definition}
\theoremstyle{remark}
 \title{The Curtis-Wellington spectral sequence through cohomology}
\author{Dana Hunter}
  \thanks{This work was partially supported by NSF DMS-2039316.}
\begin{document}
\maketitle

\begin{abstract}
We study stable homotopy through unstable methods applied to its representing infinite loop space, as
pioneered by Curtis and Wellington.  Using cohomology instead of homology, we find a width filtration whose subquotients
are simple quotients of Dickson algebras.  We make initial calculations and determine towers in the resulting width spectral sequence.
We also make  calculations related to the image of $J$ and conjecture that it is captured exactly by the lowest filtration 
in the width spectral sequence.
\end{abstract}

\section{Introduction}

Our project, first taken up by Curtis \cite{Curtis} and Wellington  \cite{Wellington}, is to study stable homotopy groups at the prime two through the unstable Adams spectral sequence for their representing space.  We call this 
 the Curtis-Wellington spectral sequence (CWSS), which to our knowledge has not been studied above the zero line for almost 
 forty years.

\subsection{Main Results}
Recall that $Q_0 S^0$ is the zero component of the infinite loop space which represents stable homotopy.
Recall as well that the Dickson algebras are rings of invariants $D_n = \F_2[x_1, \ldots, x_n]^{Gl_n(F_2)}$, calculated by Dickson as polynomial on
generators in degree $2^n - 2^\ell$.  The Steenrod algebra action on the ambient polynomial algebras restrict
to the Dickson algebras, which provide a rich and still not fully understood collection of (unstable) modules
over the Steenrod algebra. Let $D_n^o$ be the quotient of $D_n$ by all perfect squares.

\begin{theorem}\label{trigradedSSIntro}
There is a width filtration (see Definition \ref{D:widthFiltration} below) of  the indecomposables of  $H^*(Q_0S^0, \F_2)$ 
whose subquotients are $D_n^o$.  
\end{theorem}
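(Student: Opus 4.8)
The plan is to deduce everything from the classical Dyer--Lashof computation of the homology and then dualize, which is exactly where ``through cohomology'' buys something. First I would recall that, by Kudo--Araki--Dyer--Lashof theory, $H_*(Q_0S^0;\F_2)$ is a polynomial algebra on the translated classes $\overline{Q^I}:=Q^I\iota * [-2^{\ell(I)}]$, where $\iota\in H_0$ is the fundamental class, $I=(i_1,\dots,i_n)$ ranges over admissible sequences of positive excess, and $\ell(I)=n$ is the length. The essential point is that these polynomial \emph{generators are not primitive}: the reduced coproduct $\overline{\psi}(\overline{Q^n})=\sum_{0<i<n}\overline{Q^i}\otimes\overline{Q^{n-i}}$ shows that $\overline{Q^2}$ is indecomposable but not primitive, whereas $(\overline{Q^1})^2$ \emph{is} primitive. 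Since the indecomposables of a cohomology algebra are canonically dual to the primitives of the homology coalgebra, $QH^*(Q_0S^0;\F_2)\cong\big(PH_*(Q_0S^0;\F_2)\big)^\vee$, the theorem is equivalent to exhibiting the asserted filtration on the primitives of $H_*$ and computing its subquotients, and these primitives are genuinely finer than the Dyer--Lashof generators.

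Next I would set up the width filtration of \refD{widthFiltration} on the primitives. The guiding principle, visible in the examples above, is that a primitive is assigned the length $n=\ell(I)$ of the admissible monomial $\overline{Q^I}$ occurring as its leading term, and that the Frobenius $x\mapsto x^2$ raises width by exactly one, since $(\overline{Q^I})^2=\overline{Q^{(|I|,\,I)}}$ has length $\ell(I)+1$. I would then verify that this is a filtration by $\A$-submodules: concretely, that the dual Steenrod action on $QH^*$, computed from the Nishida relations, never raises width. Granting this, the width-$n$ subquotient is the $\A$-module spanned by the width-$n$ primitives modulo those of lower width, and the task reduces to identifying it.

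The heart of the proof --- and the step I expect to be the main obstacle --- is the identification of this width-$n$ subquotient with $D_n^o$ as an $\A$-module. On underlying graded vector spaces I would match the width-$n$ primitives with the non-square monomial basis of $D_n=\F_2[x_1,\dots,x_n]^{GL_n(\F_2)}$: the Dickson generators sit in degrees $2^n-2^\ell$, and the reason the subquotient is $D_n^o$ rather than all of $D_n$ is that the perfect squares of $D_n$ are precisely the classes pushed up to width $n+1$ by the Frobenius (a square $c_{n,\ell}^2$ has degree $2^{n+1}-2^{\ell+1}$, the degree of a generator one level up). To promote this bijection to an $\A$-module isomorphism I would compare the two Steenrod actions: the action on the width-$n$ subquotient coming from the dualized Nishida relations, and the restriction to $D_n$ of the Steenrod action on $\F_2[x_1,\dots,x_n]=H^*(B(\Z/2)^n;\F_2)$. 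The natural mechanism is the inclusion $(\Z/2)^n\hookrightarrow\Sigma_{2^n}$ governing the length-$n$ extended power, which realizes the width-$n$ piece geometrically and makes the relevant action the invariant-theoretic one on the nose. The delicate part is checking that these agree exactly, not merely up to the filtration, together with the bookkeeping reconciling the two distinct Frobenius structures (squaring in $H_*$ versus perfect squares in $D_n$); I would reduce this to the action of the $\Sq^{2^j}$ on the degree-$(2^n-2^\ell)$ classes of each side and propagate by the Cartan formula and the instability relations.
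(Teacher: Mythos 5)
Your proposal is a genuinely different route from the paper's: you work in homology, filtering the primitives $PH_*(Q_0S^0;\F_2)$ by Dyer--Lashof length and dualizing, whereas the paper works directly with the Giusti--Salvatore--Sinha Hopf ring presentation of $H^*(BS_\infty)$. In the paper the indecomposables are read off from Theorem \ref{cohomBSinfty} as single skyline columns with at least one odd exponent, the width filtration of Definition \ref{D:widthFiltration} is literally the geometric width $2^{n-1}$ of the column, closure under the Steenrod action is immediate from the explicit formula of Theorem \ref{SteenrodAction} (which only produces terms of the same or smaller width on indecomposables), and the identification of $F_n/F_{n-1}$ with $D_n^o$ falls out of Corollary 7.6 of \cite{GSS}: restriction to the transitive elementary abelian subgroup $V_n\subset S_{2^n}$ sends $\gamma_{\ell[2^k]}$ ($\ell+k=n$) to the Dickson generator $d_{k,\ell}$, so width-$2^{n-1}$ columns with an odd exponent biject with non-square Dickson monomials. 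The entire point of the paper's cohomological setup is to make each of these steps a one-line observation.

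The gap in your version is that the objects you filter are never actually produced. You correctly note that $QH^*\cong(PH_*)^\vee$ and that the primitives are finer than the Dyer--Lashof generators, but you then assign a width to a primitive via ``the admissible monomial occurring as its leading term'' without an explicit basis of $PH_*(Q_0S^0;\F_2)$; that basis is Madsen's theorem, and without it the leading term, the well-definedness of the width, and the claim that the (right) Steenrod action computed from the Nishida relations respects the filtration are all unproved. (There is also a bookkeeping point you elide: an increasing filtration of $PH_*$ by $\A$-submodules dualizes to a decreasing one on $QH^*$, so you must pass to annihilators to recover the filtration of Definition \ref{D:widthFiltration}.) The step you flag as ``the main obstacle'' --- identifying the width-$n$ subquotient with $D_n^o$ as an $\A$-module via $(\Z/2)^n\hookrightarrow \Sigma_{2^n}$ --- is indeed the crux, and matching a non-square monomial basis plus checking $\Sq^{2^j}$ on generators does not suffice: you need that the dual of restriction identifies the length-$n$ primitives modulo length $<n$ with the Dickson coinvariants, which is again essentially Madsen's computation, carried out through repeated Adem/Nishida manipulations. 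So the outline can be completed (it is in effect the Madsen--Wellington route), but as written it defers exactly the hard content that the paper's proof replaces with the skyline-diagram presentation.
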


Applying a reduction of Bousfield (see Proposition \ref{abelianE2} below) we have the following.

\begin{corollary}
There is  
a width spectral sequence with 
	\[ E_1^{s, t;n } = \Ext_{\cu}^{s,t}( D_n^o, \F_2) \]
	and $d_r: \Ext^{s,t}( D_n^o) \to \Ext^{s+1,t}( D_{n+r}^o)$ which  converges to 
the $E_2$-term of the Curtis-Wellington spectral sequence.
	\end{corollary}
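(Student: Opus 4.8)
The plan is to realize the width spectral sequence as the spectral sequence attached to the width-filtered module under the contravariant functor $\Ext_\cu(-,\F_2)$, and then to identify its abutment with the $E_2$-term of the CWSS by means of Bousfield's reduction. First I would invoke Proposition~\ref{abelianE2}: it identifies the $E_2$-term of the CWSS with $\Ext_\cu^{s,t}(M,\F_2)$, where $\cu$ is the category of unstable modules over the Steenrod algebra and $M$ denotes the module of cohomology indecomposables of $H^*(Q_0S^0,\F_2)$. By Theorem~\ref{trigradedSSIntro}, $M$ carries the width filtration, with associated graded pieces $D_n^o$. Since $\cu$ is an abelian category with enough projectives, $\Ext_\cu(-,\F_2)$ is a contravariant cohomological $\delta$-functor, so the problem reduces to the formal construction of the spectral sequence of a filtered object together with a convergence check.

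Next I would assemble the exact couple. Writing the width filtration as a sequence of submodules with successive quotients $D_n^o$, each successive short exact sequence of the form $0 \to F_{n+1} \to F_n \to D_n^o \to 0$ yields, upon applying $\Ext_\cu(-,\F_2)$, a long exact sequence in which the connecting homomorphism raises the homological degree $s$ by one. These long exact sequences splice into an exact couple; its derived spectral sequence has $E_1^{s,t;n} = \Ext_\cu^{s,t}(D_n^o,\F_2)$, i.e.\ $E_1$ is $\Ext_\cu$ of the associated graded. The $d_1$ differential is the composite of the connecting map (raising $s$ by one) with restriction along the next inclusion, so it lands in $\Ext^{s+1,t}(D_{n+1}^o)$; inductively, $d_r$ raises $s$ by one and the width by $r$, which is exactly the bidegree $d_r : \Ext^{s,t}(D_n^o) \to \Ext^{s+1,t}(D_{n+r}^o)$ in the statement. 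Note that the internal degree $t$ is preserved throughout, since all maps in the filtration and its resolutions are degree-preserving.

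For convergence I would use the degree growth of the Dickson generators. Recall that $D_n$ is polynomial on classes in degrees $2^n - 2^\ell$ for $0 \le \ell < n$, so $D_n^o$ is concentrated in internal degrees bounded below by a quantity that grows with $n$. Consequently, in each fixed internal degree $t$ only finitely many width levels $n$ can contribute to $E_1^{s,t;n}$; the filtration is therefore locally finite in each $t$. This degreewise finiteness makes the filtration exhaustive and Hausdorff in each internal degree and kills any potential $\varprojlim^1$ obstruction, so the spectral sequence converges strongly to $\Ext_\cu^{s,t}(M,\F_2)$, which by the first step is the $E_2$-term of the CWSS.

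The formal exact-couple construction is routine; the genuinely non-formal point, and the one I expect to require care, is convergence. One must confirm that $\Ext_\cu$ interacts correctly with the (possibly infinite) width filtration so that the abutment is $\Ext_\cu(M,\F_2)$ itself and not merely a completion of it. The degree growth of the Dickson generators is precisely what collapses this difficulty into a finiteness statement in each internal degree $t$; pinning this bound down explicitly, together with fixing the variance of the filtration so that the differential has the stated bidegree, is where the real work lies.
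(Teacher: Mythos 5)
Your proposal matches the paper's construction: the paper likewise filters the Nakaoka module $\mathfrak{N}$ by width, identifies the subquotients with $D_n^o$, applies $\Ext_{\cu}(-,\F_2)$ to the short exact sequences $0 \to F_{n-1} \to F_n \to D_n^o \to 0$, and assembles the resulting long exact sequences into an exact couple whose spectral sequence converges to $\Ext_{\cu}(\Sigma^{-1}\mathfrak{N},\F_2)$, which is the CWSS $E_2$ by the Bousfield reduction; your convergence argument via the degree growth of the Dickson generators is more explicit than anything the paper records. The one slip is that you wrote the short exact sequences as $0 \to F_{n+1} \to F_n \to D_n^o \to 0$, a decreasing indexing which, taken literally under the contravariant $\Ext$, would make the $d_1$ differential \emph{decrease} the width index; with the paper's increasing convention the differential has the stated bidegree $\Ext^{s,t}(D_n^o) \to \Ext^{s+1,t}(D_{n+r}^o)$, as you correctly assert.
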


Our calculations rely on presentation of the cohomology of $Q_0 S^0$ due to Giusti-Salvatore-Sinha.
Through that presentation we can already manage by-hand calculations of the Curtis-Wellington
spectral sequence more readily than by previous techniques.  But filtration by (quotients of) Dickson algebras
is particularly amenable to computer calculation, for which we can thank Hood Chatham - see Appendix \ref{F:FullChart}.  

It is elementary to eliminate the possibility of many differentials in the width spectral sequence, so these
computer calculations
provide a good understanding of the $E_2$-term of the Curtis-Wellington spectral sequence  as well.
While, disappointingly, the $E_2$ term of the CWSS is much larger than even the classical Adams spectral sequence,
we have explored two regular phenomena.

In Section \ref{TowersSec} we make calculations to determine the locations of towers in the resulting width spectral sequence. 

\begin{theorem} \label{D1 towers intro}
		There are infinite towers in $\Ext^{s,t}(\Sigma^{-1}D_1^o, \F_2)$ in degrees $4a - 2$ for $a$ a positive integer. 
	\end{theorem}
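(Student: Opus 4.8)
The plan is to make $\Sigma^{-1}D_1^o$ completely explicit, exploit the vanishing of the odd Steenrod operations to manufacture infinite $Sq^1$-towers, and then locate the bottoms of those towers in the stems $4a-2$. First I would read off the $\cu$-module structure from the definition: since $Gl_1(\F_2)$ is trivial, $D_1=\F_2[x]$ with $|x|=1$, and the perfect squares span $\F_2\{1,x^2,x^4,\dots\}$, so $D_1^o$ has basis $\{x^{2k+1}\}_{k\ge 0}$. From the Cartan formula $Sq^i x^n=\binom{n}{i}x^{n+i}$ and Lucas' theorem one gets that every odd square vanishes on $D_1^o$ while $Sq^{2j}x^{2k+1}=\binom{k}{j}x^{2(k+j)+1}$. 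In particular $Sq^{\mathrm{top}}$ kills $D_1^o$, so $D_1^o$ is a suspension and $\Sigma^{-1}D_1^o$ is a genuine unstable module; comparing the displayed formula with the Frobenius doubling functor $\Phi$ identifies
\[ \Sigma^{-1}D_1^o\;\cong\;\Phi\big(H^*(\R P^\infty;\F_2)\big)\;=\;\F_2\oplus\Phi\,\widetilde H^*(\R P^\infty;\F_2). \]
The feature I would carry forward is that $Sq^1$, and indeed every odd square, acts as zero on $\Sigma^{-1}D_1^o$.

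Next I would pass to a minimal free resolution $\cdots\to P_1\to P_0\xrightarrow{\epsilon}\Sigma^{-1}D_1^o\to 0$ in $\cu$, so that $\Ext_\cu^{s,t}$ records the degree-$t$ generators of $P_s$. The tower mechanism rests on the two facts that $Sq^1$ annihilates the module and that $Sq^1Sq^1=0$. If $g$ is a degree-$d$ generator of $P_0$, then $\epsilon(Sq^1 g)=Sq^1\epsilon(g)=0$, so $Sq^1 g$ is a nonzero cycle and, by minimality, is hit by a generator $\rho_1$ of $P_1$ in degree $d+1$; inductively, once $d_s(\rho_s)=Sq^1\rho_{s-1}$ one has $d_s(Sq^1\rho_s)=Sq^1Sq^1\rho_{s-1}=0$, so $Sq^1\rho_s$ is again a cycle and is hit by a generator $\rho_{s+1}$ of $P_{s+1}$ in degree $d+s+1$. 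This produces $\Ext_\cu^{s,d+s}\neq 0$ for all $s\ge 0$, an infinite tower in the stem $d$. The only point to check is that each $Sq^1\rho_s$ is a \emph{minimal} generator — that nothing of the same internal degree competes with it — which I would confirm from the explicit low-degree resolution by a degree-and-excess count. This is the unstable shadow of the $h_0$-Bockstein: since $Sq^1$ acts as zero, the $Q_0$-Margolis homology is everything, so towers are plentiful and the real question is which stems hold their bottoms.

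Then I would locate the bottoms. By the Lucas computation the indecomposables of $\Phi\,\widetilde H^*\R P^\infty$ sit in degrees $2^{i+1}-2$, so $\Ext_\cu^0$ already seeds towers in the stems $2,6,14,30,\dots$. To reach the remaining $4a-2$ (for instance $10,18,22,26$) the bottoms must occur at positive homological degree, and to find them I would use the self-referential Frobenius short exact sequence
\[ 0\to\Sigma^{-1}D_1^o\xrightarrow{\ \lambda\ }H^*(\R P^\infty;\F_2)\to D_1^o\to 0, \qquad D_1^o=\Sigma\big(\Sigma^{-1}D_1^o\big), \]
whose long exact sequence in $\Ext_\cu(-,\F_2)$ expresses the higher tower-bottoms through $\Ext_\cu(H^*\R P^\infty,\F_2)$ — which is small, $\R P^\infty$ being a $K(\Z/2,1)$ — and through a single suspension. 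Running this recursion and matching the base cases against the computer-generated chart of Appendix \ref{F:FullChart} should show that each new bottom lands in a stem $\equiv 2\pmod 4$ and that every such stem $4a-2$ is attained.

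The hard part will be controlling this recursion tightly enough to establish the pattern for \emph{all} $a$ rather than in a finite range. Two difficulties stand out. First, suspension is not exact on projectives, so relating $\Ext_\cu(\Sigma M)$ to $\Ext_\cu(M)$ forces one through the standard but delicate $\Phi$--$\Sigma$ exact sequences, and the connecting map in the sequence above must be tracked carefully to see which classes truly survive as bottoms. Second, the parity $\equiv 2\pmod 4$ is genuinely unstable — stably every even stem carries an $h_0$-tower, as the $Q_0$-Margolis homology shows — so it has to be extracted from the admissibility-and-binomial combinatorics of a Frobenius-doubled resolution; making that uniform in $a$, presumably by producing an explicit infinite family of cycles or a clean additive description of $\Ext_\cu(H^*\R P^\infty,\F_2)$, is where I expect the main work to lie, after which the minimality check of the second step secures infiniteness.
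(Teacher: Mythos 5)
Your identification of the module is correct, and the observation that $\Sigma^{-1}D_1^o\cong\Phi\big(H^*(\R P^\infty;\F_2)\big)$ with all odd squares acting trivially is accurate (and not made explicit in the paper). But there is a genuine gap at the core of the argument: your ``tower mechanism'' only manufactures towers sitting over generators you can already exhibit, and the $\A$-indecomposables of $\Sigma^{-1}D_1^o$ live only in degrees $2^{i+1}-2$, i.e.\ stems $2,6,14,30,\dots$. The bulk of the theorem --- towers in stems $10,18,22,26,\dots$ --- therefore rests entirely on the recursion in your final step, which you do not carry out and which you yourself flag as ``where the main work will lie.'' That recursion moreover leans on the unjustified claim that $\Ext_{\cu}(H^*(\R P^\infty),\F_2)$ is small because $\R P^\infty$ is a $K(\Z/2,1)$: smallness of the unstable Adams $E_2$ for the space concerns $\Ext_{\ua}$, equivalently $\Ext_{\cu}$ of $\Sigma^{-1}\ind H^*(\R P^\infty)=\F_2$, not $\Ext_{\cu}$ of the whole module $\widetilde H^*(\R P^\infty)$, which already has infinitely many $\Ext^0$ classes (generators in degrees $2^i-1$). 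Finally, even the seeded towers are not secured: the assertion that each $\Sq^1\rho_s$ is a \emph{minimal} generator is precisely what needs proof uniformly in $s$, and verifying it ``from the explicit low-degree resolution'' or against the computer chart can only ever cover finitely many stems.

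The paper closes all of these gaps at once by a different route: it passes to the linear dual $M=(\Sigma^{-1}D_1^o)^*$ as an unstable right $\A$-module and applies Bousfield's tower-detecting complex $T(M)$, a quotient of the $\Lambda$-algebra complex $V(M)$ whose towers are known to coincide with those of $\Ext_{\cu}$. Since the differential of $T(M)$ involves only $\Sq^1$ (which acts as zero here) and $\Sq^{2k}$ (which sends $x_{4k+1}$ to $x_{2k+1}$), computing $H^2(T(M))$ is a short case check: $x_{4k+1}\otimes(\lambda_0)^2$ is not a cycle, $x_{2k+1}\otimes\lambda_{2k-1}\lambda_0$ is a boundary, and $x_{4k-1}\otimes(\lambda_0)^2$ is a cycle that is never a boundary, yielding a tower in stem $4k-2$ for every $k\ge 1$, uniformly in $k$. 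If you want to salvage your approach, replacing the minimal-resolution heuristic and the Frobenius recursion with this tower-detector computation is the missing ingredient.
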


	\begin{theorem}\label{Dn towers intro}
		Let $n$ be an integer greater than or equal to 2. There are towers in $\Ext^{s,t}(\Sigma^{-1}D_n^o, \F_2)$ corresponding to all integer solutions of
		\[ (2^{n-2}-2^{n-3})a_1 + \cdots + (2^{n-2}-1)a_{n-2} + (2^{n-1}-1)b_{n-1} = k \]
		where at least one of $a_1, \ldots, a_{n-2}$ are odd and $n \ge 3$. And also towers corresponding to all integer solutions of
		\[ (2^{n-1}-2^{n-2})b_1 + \cdots + (2^{n-1}-1)b_{n-1} + (2^{n}-1)c_{n} + (2^n-1) = k. \]
	\end{theorem}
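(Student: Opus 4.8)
The plan is to compute $\Ext_{\cu}^{s,t}(\Sigma^{-1}D_n^o,\F_2)$ by exploiting the recursive ``doubling'' structure of the Dickson algebras and then to read off the towers as infinite $h_0$-divisible families, where $h_0\in\Ext^{1,1}_{\cu}(\F_2,\F_2)$ is the class dual to $\Sq^1$. The structural input is the classical identification of $D_n$ with a polynomial extension of the Frobenius double of the previous Dickson algebra, $D_n=\Phi(D_{n-1})[d_{n,0}]$, where $d_{n,0}$ is the top Dickson class (the product of the nonzero linear forms) in degree $2^n-1$, and where the lower generators of $D_n$ are exactly the doubles of those of $D_{n-1}$. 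Since squaring is additive over $\F_2$ and $\big(\sum_k a_k d_{n,0}^{\,k}\big)^2=\sum_k a_k^2 d_{n,0}^{\,2k}$, the squares land only in even powers of $d_{n,0}$, so passing to the quotient by squares gives a splitting of underlying graded vector spaces
\[ D_n^o \;\cong\; \bigoplus_{k\ \mathrm{odd}}\Phi(D_{n-1})\,d_{n,0}^{\,k}\ \oplus\ \bigoplus_{k\ \mathrm{even}}\Phi(D_{n-1}^o)\,d_{n,0}^{\,k}. \]
First I would make this an $\mathcal A$-module statement: since $\Sq^i d_{n,0}$ is not primitive, the $d_{n,0}$-power grading is only a filtration, so the work is to show the associated graded is as displayed and to record the $\Sq^1$-action and top operation on each summand, after which I would compute $\Ext_{\cu}$ using the lambda-algebra model adapted to the Giusti--Salvatore--Sinha presentation, as in the Curtis--Wellington framework.

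The detection of towers rests on the fundamental exact sequence $0\to\Sigma\Omega_1 M\to\Phi M\xrightarrow{\lambda_M} M\to\Sigma\Omega M\to 0$, in which $\lambda_M(\Phi x)=\Sq^{|x|}x=x^2$ for an unstable algebra, so that $\operatorname{im}\lambda_M$ is the subspace of squares and $\Sigma\Omega M\cong M^o$. The resulting long exact sequence in $\Ext_{\cu}(-,\F_2)$ places $\Ext_{\cu}(\Phi M)$ between $\Ext_{\cu}(M)$ and $\Ext_{\cu}(\Sigma M^o)$, and this is exactly the mechanism behind the base case Theorem~\ref{D1 towers intro}: for $M=\F_2[z]$ one has $\Sigma^{-1}D_1^o=\Phi(\F_2[z])$ and the towers occur precisely at $\Phi(z^{\,\mathrm{odd}})$, i.e.\ at the classes of $M^o$, giving the degrees $4a-2$. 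I would therefore set up an induction on $n$ in which the doubling functor, via this sequence, transports the towers already established for $D_{n-1}^o$ (and, after a second descent through $\Phi(D_{n-1}^o)$, for $D_{n-2}^o$) into towers for $D_n^o$; in the lambda algebra the transported classes are visible because right multiplication by $\lambda_0$ preserves admissibility, so a cycle $g$ yields an infinite tower $\{g\lambda_0^k\}$ as soon as none of these classes is a boundary.

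The two families should then correspond to the two summands above and to the depth at which the recursion terminates. The ``odd $k$'' summand carries the full $\Phi(D_{n-1})$ and the explicit power $d_{n,0}^{\,k}$ with $k\ge1$; tracking internal degrees — the doubling halves the $t$-coordinate on $\Ext$ so that the doubled $D_{n-1}$-generator degrees reappear as the undoubled coefficients $2^{n-1}-2^\ell$, while the genuinely new class $d_{n,0}$ contributes its unhalved degree $2^n-1$ through the connecting map — produces the second equation with its forced positive power $d_{n,0}^{\,c_n+1}$. The ``even $k$'' summand only sees $\Phi(D_{n-1}^o)$, which forces a further application of the decomposition to $D_{n-1}$ and hence brings in the generators of $D_{n-2}$ and the top class $d_{n-1,0}$ of $D_{n-1}$; this yields the first equation, valid only for $n\ge3$. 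In both cases the parity and positivity constraints are the ``reduced mod squares'' conditions at the relevant level: the requirement that at least one of $a_1,\dots,a_{n-2}$ be odd is exactly the condition that the underlying monomial be nonzero in $D_{n-2}^o$, i.e.\ not a perfect square, and hence name a genuine generator rather than being absorbed into a double.

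The main obstacle will be proving the towers are genuinely infinite rather than truncating: one must show each $g\lambda_0^k$ remains a nonzero homology class for all $k$, which requires controlling the lambda-algebra (equivalently $\Sq^1$/Bockstein) differential on all of $\Sigma^{-1}D_n^o$ at once, not merely at the bottom of each tower, and treating the extension between the two summands so that no tower in one family is killed by a differential originating in the other. This is delicate precisely because $\Sq^1$ vanishes identically on every doubled summand, so the usual stable ``$Q_0$-Margolis homology'' count overcounts the towers; the genuinely unstable input — the instability bound forcing certain operations to vanish, together with the interaction of $d_{n,0}$-multiplication with the functor $\Phi$ and the shift $\Sigma\Omega M\cong M^o$ — is what cuts the count down to exactly the constrained solution sets. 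I would therefore devote most effort to verifying that these unstable constraints are captured precisely by the two Diophantine equations, confirming along the way that each integer solution names one and only one tower.
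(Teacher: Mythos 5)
You have correctly identified the detection mechanism the paper uses --- Bousfield's tower-detecting quotient $T(M)$ of the $\Lambda$-algebra complex, in which a class generates an infinite $h_0$-tower exactly when it survives to $H^2(T(M))$ --- and your interpretation of the two Diophantine families as ``odd powers of the top Dickson class'' versus ``reduced mod squares one level down'' is consistent with what the computation produces. But your proposal defers precisely the step that constitutes the paper's entire proof. The paper works directly with $M=(\Sigma^{-1}D_n^o)^*$: it writes down closed formulas for the right actions $x_{a_1,\ldots,a_n}\cdot \Sq^1$ and $x_{a_1,\ldots,a_n}\cdot \Sq^{2k}$ on the dual monomial basis, sorts the cycles of $T^2(M)$ into four explicit types, and checks type by type which are boundaries; the surviving classes are exactly the solutions of the two equations with their parity constraints. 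Nothing in your write-up produces those action formulas or performs that elimination --- you explicitly postpone it (``I would therefore devote most effort to verifying\ldots'') --- so as it stands the two equations are asserted rather than derived.

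Moreover, the inductive route you propose in place of the direct computation runs into an obstruction you name but do not resolve. The splitting $D_n^o\cong\bigoplus_{k\ \text{odd}}\Phi(D_{n-1})\,d_{n,0}^{\,k}\oplus\bigoplus_{k\ \text{even}}\Phi(D_{n-1}^o)\,d_{n,0}^{\,k}$ is only an isomorphism of graded vector spaces (at best of associated graded objects for the $d_{n,0}$-adic filtration), and the tower-detecting differential is not functorial for $\Phi$ in the way the induction requires: by Definition \ref{def:towers} it is built from $\Sq^1$ together with $\Sq^{2k}$ acting only on classes of degree exactly $4k$, and doubling degrees changes which classes satisfy that congruence (a class of degree $2k$ with $k$ even doubles into the $4k$ range even though it was not there before), while $\Sq^1$ vanishes identically on $\Phi N$. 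So towers of $D_{n-1}^o$ do not transport to towers of the summands $\Phi(D_{n-1}^o)d_{n,0}^{\,k}$ by any formal argument; one would need a genuine comparison theorem relating $H^2(T(\Phi N))$ to $H^2(T(N))$, plus control of the extension between the two summands, and neither is supplied. Completing your argument would require either proving such a comparison or simply carrying out the cycle/boundary analysis directly, which is what the paper does in one pass.
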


These results agree with the locations of towers identified by  Wellington in the $E_2$ term of the CWSS,
  thus imply that there are no differentials between the towers internal to the width spectral sequence. \\

Finally, in Section \ref{JSec}
 we make calculations relating to the image of $J$, seeing preliminarily that it seems to be governed 
by the width filtration. We noticed that the unstable Adams $\Ext$ chart for $H^*(BO, \F_2)$ is a shifted version of the unstable Adams $\Ext$ chart for the first quotient of the width filtration. 

\begin{proposition}\label{shiftedD1 Intro}
$\Ext_{\cu}^{s,t}(\Sigma^{-1}  D_1^o) \cong \Ext_{\cu}^{s,{t+1}}(\Sigma^{-1} \ind H^*(BO))$
\end{proposition}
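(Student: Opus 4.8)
The plan is to reduce the statement to an explicit comparison of two completely understood unstable $\A$-modules and then to propagate that comparison through their resolutions. First I would make both sides concrete. For $\ind H^*(BO)$ I would apply the Wu formula $Sq^i(w_j)=\sum_t\binom{j+t-i-1}{t}w_{i-t}w_{j+t}$ and pass to indecomposables: every term with $i-t>0$ is decomposable, so only the $t=i$ term survives and $Sq^i\bar w_j=\binom{j-1}{i}\bar w_{i+j}$. Under $\bar w_j\leftrightarrow t^{j-1}$ this is exactly the action on $\F_2[t]=H^*(\R P^\infty)$, namely $Sq^i t^m=\binom{m}{i}t^{m+i}$, so $\ind H^*(BO)\cong\Sigma\,\F_2[t]$ and $\Sigma^{-1}\ind H^*(BO)\cong\F_2[t]$. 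On the other side $GL_1(\F_2)$ is trivial, so $D_1=\F_2[x_1]$ with $|x_1|=1$; modding out squares leaves the odd powers, on which $Sq^{2k}x_1^{2j+1}=\binom{j}{k}x_1^{2(j+k)+1}$ and odd operations vanish. This is precisely the Frobenius double, so I would identify $\Sigma^{-1}D_1^o\cong\Phi(\F_2[t])\cong\Phi(\Sigma^{-1}\ind H^*(BO))$, the subalgebra of squares.

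The comparison is then governed by the doubling functor $\Phi$ and the natural transformation $\lambda:\Phi M\to M$, which here is the inclusion $\F_2[t^2]\hookrightarrow\F_2[t]$ of squares; its cokernel is the span of odd powers, i.e. $\Sigma\Phi(\F_2[t])\cong D_1^o$, giving the short exact sequence $0\to\Sigma^{-1}D_1^o\xrightarrow{\lambda}\Sigma^{-1}\ind H^*(BO)\to D_1^o\to0$ in $\mathcal U$. The first thing to verify is the edge case $s=0$, which fixes the indexing: the $\A$-module generators of $\F_2[t]$ lie exactly in the degrees $2^k-1$, hence those of $\Phi(\F_2[t])$ lie in degrees $2(2^k-1)$, and since $2(2^k-1)+1=2^{k+1}-1$ is the next generator degree of $\F_2[t]$, the generating degrees of $\Sigma^{-1}D_1^o$ are precisely those of $\Sigma^{-1}\ind H^*(BO)$ lowered by one. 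This already exhibits the claimed shift on the zero line.

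To promote this to all homological degrees I would apply the exact functor $\Phi$ to a minimal resolution computing $\Ext_{\cu}(\Sigma^{-1}\ind H^*(BO))$ and then re-resolve the result into a minimal one, using the functorial sequence $0\to\Phi J\to J\to\Sigma\Phi J\to0$ on each injective $J$ to convert $\Phi$ of the resolution back into minimal form; tracking internal degrees should show that the degree-doubling built into $\Phi$ combines with the $2^k-1$ spacing of the generators to produce a uniform shift of one in $t$. I expect the main obstacle to be exactly this reconciliation — turning the intrinsic degree-doubling behavior of $\Phi$ into the stated shift-by-one, uniformly in $s$ rather than merely on the zero line; a secondary point is to account for the single anomalous class coming from $\bar w_1$ (equivalently the degree-$0$ generator $t^0$ of $\F_2[t]$), whose image under the shift would sit at $t=-1$ and therefore lies outside the comparison. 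Finally I would match the resulting infinite towers against the locations already computed for $\Ext_{\cu}(\Sigma^{-1}D_1^o)$, and against Wellington's, to confirm that the isomorphism is realized tower-by-tower.
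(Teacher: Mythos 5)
Your setup is exactly the paper's: you identify $M=\Sigma^{-1}\ind H^*(BO)$ with $\F_2[t]$, observe that $\lambda_M\colon\Phi M\to M$ is the injection onto the squares with $\Phi M\cong\Sigma^{-1}D_1^o$ and $\coker\lambda_M\cong D_1^o$, and this short exact sequence is precisely what the paper's proof of Proposition \ref{shiftedD1} extracts (there phrased as $\lambda_M(w_i)=\Sq^{i-1}w_i=w_{2i-1}$, cokernel spanned by the $w_{2i}$). The gap is in the step you yourself flag as the main obstacle: passing from this sequence to the shift-by-one on $\Ext$ uniformly in $s$. Your proposed mechanism --- apply $\Phi$ to a minimal resolution and re-resolve using a ``functorial sequence $0\to\Phi J\to J\to\Sigma\Phi J\to 0$'' --- fails, because that sequence is false for free (and for injective) unstable modules: for the free module $F(n)$ on one $n$-dimensional generator, $\lambda_{F(n)}$ is injective with cokernel $\Sigma F(n-1)$, which is generated in degree $n$ and is not the evenly-concentrated module $\Sigma\Phi F(n)$. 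The identity $\coker\lambda_M\cong\Sigma\Phi M$ is an accident of $M=\F_2[t]$, and since $\Phi$ doubles internal degrees, pushing $\Phi$ through a resolution cannot by itself produce a uniform shift of one in $t$.

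The functor that actually produces the shift is $\Omega$, the left adjoint of $\Sigma$, with $\Sigma\Omega M=\coker\lambda_M$; what your sequence really says is $\Omega M\cong\Sigma^{-1}D_1^o$ and $\Omega_1 M=\ker\lambda_M=0$. The paper's Proposition \ref{injectiveLambdaExtIso} then finishes in one line: vanishing of $\Omega_1 M$ means $\Omega$ applied to a free resolution $P_\bullet\to M$ is a free resolution of $\Omega M$ (using $\Omega F(n)=F(n-1)$), and the adjunction $\Hom_{\cu}(\Omega P_\bullet,\F_2)\cong\Hom_{\cu}(P_\bullet,\Sigma\F_2)$ gives $\Ext^{s,t}_{\cu}(\Sigma^{-1}D_1^o)\cong\Ext^{s,t+1}_{\cu}(M)$. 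Your observation that the generator degrees $2^{k+1}-2$ of $\Sigma^{-1}D_1^o$ sit one below the generator degrees $2^{k+1}-1$ of $\F_2[t]$ is the $s=0$ shadow of this adjunction, and the ``anomalous'' class $\bar w_1=t^0$ is harmless for the argument since it lies in the image of $\lambda_M$ and so does not appear in $\Omega M$ (it only affects the degenerate bidegree $t=-1$). To complete your proof you need to replace the $\Phi$-of-resolutions step by this derived adjunction for $\Omega$, or prove an equivalent statement.
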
 

While it seems this would be classical, we haven't found any treatment of this in the literature. This motivated us to study the image of $J$ map on cohomology, with preliminary calculations that support the following. 

\begin{conjecture}\label{imJ conjecture intro}
The algebraic map on $\Ext$ induced by the image of $J$ on cohomology is the same as the map on $\Ext$ induced by reduction to $D_1^o$. 
\end{conjecture}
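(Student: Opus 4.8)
The plan is to realize both maps as coming from honest maps of unstable $\mathcal{A}$-modules and then to show that, under the identification of Proposition~\ref{shiftedD1 Intro}, these two module maps agree up to the indicated degree shift; naturality of $\Ext_{\cu}(-,\F_2)$ will then force the two induced maps on $\Ext$ to coincide. Both maps in the conjecture have target $\Ext_{\cu}(\ind H^*(Q_0S^0))$. The reduction map is induced by the width-filtration projection $r\colon \ind H^*(Q_0S^0)\twoheadrightarrow D_1^o$ onto the lowest subquotient (Definition~\ref{D:widthFiltration}, Theorem~\ref{trigradedSSIntro}), while the image-of-$J$ map is induced by $\ind J^*\colon \ind H^*(Q_0S^0)\to \ind H^*(BO)$, the effect on indecomposables of the cohomology homomorphism $J^*$ attached to the geometric $J$-map. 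Since Proposition~\ref{shiftedD1 Intro} identifies the two sources, it suffices to prove that $\ind J^*$ and $r$ agree under the module-level isomorphism $\ind H^*(BO)\cong \Sigma D_1^o$ refining that proposition.

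First I would fix the space-level map inducing $J^*$ and compute its effect on cohomology in the Giusti--Salvatore--Sinha presentation of $H^*(Q_0S^0;\F_2)$. The goal is to express the pullbacks $J^*w_i$ of the Stiefel--Whitney classes in terms of the GSS generators and read off the induced map on indecomposables. Because $H^*(BO;\F_2)=\F_2[w_1,w_2,\ldots]$ is a single polynomial algebra --- morally ``one Dickson algebra's worth'' of data, matching $D_1=\F_2[x_1]$ --- I expect $\ind J^*$ to be surjective with image precisely the $n=1$ stratum of the width filtration, so that it factors through the projection to $D_1^o$.

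Next I would identify $r$ explicitly in the same presentation: the width filtration is defined combinatorially on the GSS generators, and the bottom quotient $D_1^o$ is cut out by the generators of minimal width. I would then compare $\ind J^*$ with $r$ generator by generator. The expected conclusion is that $\ind J^*=\iota\circ r$, where $\iota\colon D_1^o\xrightarrow{\cong}\Sigma^{-1}\ind H^*(BO)$ is the module isomorphism underlying Proposition~\ref{shiftedD1 Intro} (carrying the degree shift $t\mapsto t+1$). If the two maps fail to agree on the nose and instead differ by contributions of higher width, I would argue that those contributions lie in positive width filtration and hence are invisible to the relevant map on $\Ext$, which via Proposition~\ref{abelianE2} is detected entirely on the lowest associated graded.

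The main obstacle is the explicit computation of $J^*$ on cohomology: the behavior of the $J$-map on the representing space is classically delicate, and writing $J^*w_i$ in the GSS generators demands a careful analysis of the Dyer--Lashof and Dickson structure those generators carry. A secondary subtlety is the bookkeeping of the suspension $\Sigma^{-1}$ and the degree shift in Proposition~\ref{shiftedD1 Intro}: I must check that the module isomorphism realizing that proposition is natural enough that equality of $\ind J^*$ with $r$ on modules descends to equality of the two maps on $\Ext$, rather than merely equality up to an automorphism of the common source. Securing that naturality, together with the width-vanishing argument, is exactly what upgrades the abstract isomorphism of Proposition~\ref{shiftedD1 Intro} to the map-level statement asserted by the conjecture.
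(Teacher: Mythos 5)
The statement you set out to prove is, in the paper, a \emph{conjecture}: the paper offers no proof of it at all, only computational evidence through degree 15, so your attempt must stand on its own --- and it does not. The central step of your plan, that $\ind J^*$ is ``surjective with image precisely the $n=1$ stratum'' and hence factors through the projection to $D_1^o$, is directly refuted by the paper's own calculations: already in degree $3$ one finds $J^*(\g_{2[1]})=\beta_3+\beta_1^3$, so $\g_{2[1]}\mapsto\beta_3$ on indecomposables, and more generally $\g_{1[2]}^{i}\g_{2[1]}\mapsto\beta_{2i+3}$ (Conjecture \ref{conj:cohomMap}); the paper states explicitly that ``the map $J^*$ itself is not reduction to $D_1^o$.'' Your fallback --- that any discrepancy ``lies in positive width filtration and hence is invisible to the relevant map on $\Ext$'' --- is not a lemma you may invoke; it is essentially the entire content of the conjecture. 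By Theorem \ref{WidthSS}, $\Ext_{\cu}(\Sigma^{-1}\mathfrak{N},\F_2)$ is assembled from \emph{all} the subquotients $D_n^o$, and a map of modules that is nonzero on higher-width generators has no a priori reason to induce zero on the corresponding part of $\Ext$; arguing that it nonetheless does is exactly what the paper's evidence gestures toward (e.g.\ the presentation in Figure \ref{F:moduleDiag}, where $\g_{1[1]}^3+\g_{2[1]}$ and $\g_{1[1]}^5+\g_{1[2]}\g_{2[1]}$ are chosen as generators mapping to zero under $J^*$), and what its subsequent conjecture --- that there exists a presentation of the Nakaoka module in which all generators other than $\g_{1[1]}^{2^k-1}$ die --- isolates as the genuinely open point.

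Two auxiliary objects in your plan also do not exist as described. First, your projection $r\colon \ind H^*(Q_0S^0)\twoheadrightarrow D_1^o$: in the width filtration of Definition \ref{D:widthFiltration}, $D_1^o=F_1$ is the bottom \emph{sub}module, not a quotient, and Steenrod operations on indecomposables can strictly lower width (this is precisely why the width spectral sequence can have differentials at all), so killing the higher-width columns need not be a map of unstable $\A$-modules; the comparison must be made on $\Ext$ or via a chosen presentation, as the paper does. Second, Proposition \ref{shiftedD1} is an isomorphism of $\Ext$ groups with a degree shift, not a ``module-level isomorphism $\ind H^*(BO)\cong\Sigma D_1^o$'': its proof shows $\lambda_M$ is injective for $M=\Sigma^{-1}\ind H^*(BO)$ and identifies $D_1^o$ with $\coker\lambda_M$, i.e.\ $\Sigma^{-1}D_1^o\cong\Omega M$, so the two modules are genuinely different and only their $\Ext$ groups agree after the shift. (Note also that the paper's $J$-map computations concern $SO\to Q_1S^0$ using $H^*(SO)$, with the $BO$ statement reserved for the delooping.) In short, the proposal contradicts the established module-level computation and assumes the conjecture at the only point where work is actually required.
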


We provide initial evidence for this conjecture, and also speculate that the map on unstable Adams spectral sequence
$E_2$ terms is algebraic.  Such results would invite further study of the compatibility of the width filtration
and the chromatic filtration.

\subsection{Background}

Recall that the stable homotopy groups of spheres are isomorphic to the unstable homotopy groups of  $Q_0 S^0$, the degree zero component of $\varinjlim \Omega^d S^d$. 
The unstable Adams spectral sequence for $Q_0 S^0$, which we choose to name the Curtis-Wellington 
spectral sequence after the only two people to study it globally, was introduced by Curtis in \cite{Curtis}.
 He outlined some first calculations, noticing that Adams filtration lowered and that both the Hopf and Kervaire classes were in filtration zero, leading to the well-known and still open conjecture that these are the only classes to survive on the zero line.  But Curtis made some fundamental errors, which Wellington corrected and then went on to establish  more global properties, including the classification of Bockstein towers (which in particular
precludes any upper vanishing lines).

In order to make these calculations, 
Curtis and Wellington applied a  deep connection between stable homotopy and symmetric groups.  
At this level of homology, this was noticed independently and simultaneously by Barratt and Priddy \cite{BP} and Quillen.   Briefly, one models the classifying space for the $n$th symmetric group as a colimit over $d$ of space of $n$ disks in $\R^d$. Then given a set of $n$ disks in $\R^d$, associate to it a collapse map from $S^d = \R^d \cup \infty$ to itself which sends the complement of the disks to the base point and each interior of a disk homeomorphically onto the $S^d \backslash \infty$.   These maps from the space of disks to $\Omega^d S^d$ can be assembled to a map from the colimit. The Barratt-Priddy-Quillen theorem (BPQ) tells us that a resulting map from the classifying space for the infinite symmetric groups to $Q_0 S^0$ is an isomorphism in homology.  


Wellington and Curtis  then applied the known structure of the homology of $Q_0 S^0$.  This homology is a free algebra under the product induced by the connect sum of maps, whose generators are the Kudo-Araki-Dyer-Lashof algebra acting on a single class \cite{CLM}. While algebraic topologists are comfortable with ``homological coalgebra,'' in this case one runs into difficulties because calculations of the homology coproduct as well as Steenrod coaction (Nishida relations) require regular applications of Adem relations.  Wellington had to filter carefully to make things at all tractable. Through non-explicit methods, Nakaoka had previously shown that the cohomology of the infinite symmetric group and thus $Q_0 S^0$ is polynomial, generated in combinatorially interesting degrees.  Finer control of that calculation, and in particular incorporation of Steenrod algebra action, motivated many authors to study cohomology of symmetric groups in more detail in the eighties and nineties \cite{AdemMaginnisMilgram, AdemMilgram, Feshbach}. Relatively recently,  Giusti, Salvatore and Sinha found a new Hopf ring presentation for the cohomology of symmetric groups, as algebras over the Steenrod algebra \cite{GSS}, yielding a ``skyline diagram'' presentation for the cohomology of  $\si_\infty$ in the limit.   

\subsection{Plan of the paper}

In Section \ref{ReviewSec} we review work of  Giusti-Salvatore-Sinha on the cohomology of symmetric groups.
 The unstable Adams spectral sequence is typically intractable, with a non-abelian Quillen homology defining its $E_2$.  But in Section \ref{WidthSSSec}  we apply a standard result of Bousfield \cite{Bousfield} in the  special case that a cohomology ring is free, as is the case here, equating the $E_2$ with $Ext$ in the category of unstable modules over the Steenrod algebra of the desuspended indcomposables.  Using the skyline diagram presentation of the cohomology of the infinite symmetric group, these indecomposables are manageable.  

Indeed, we show that a filtration by skyline diagram width (which corresponds to composition length in the Kudo-Araki-Dyer-Lashof algebra) yields subquotients which are given by the Dickson algebras, modulo perfect squares.  The resulting width spectral sequence is relatively tractable, allowing us for example reveal an error, likely of transcription, in Wellington's Ext charts (at the 11- and 12- stems).   We then share computer calculations,
which imply  many more differentials than in the classical Adams spectral sequence, but regular phenomena as well.

  In particular, there are Bockstein ($h_0$) towers, which we classify in the width spectral sequence in Section \ref{TowersSec}.  These occur in the same dimensions as Wellington identified, though with considerably more effort, in the $E_2$ of the CWSS.  Thus, there are no differentials in the width spectral sequence with $h_0$ inverted, and we conjecture no differentials in the 
width spectral sequence in general, a purely algebraic question.   
It would be interesting to understand differentials between these $h_0$-towers and the special roles the resulting classes
in homotopy might play.

Based on a remarkable identification of the unstable Adams $E_2$ for $BO$ and some preliminary calculations, in Section \ref{JSec} we initiate the study of the $J$-homomorphism.  
We conjecture that the $J$ map from $SO$ to $Q_1 S^0$ induces
the map on the $E_2$ of the CWSS defined purely algebraically by reducing to the width one quotient in the width filtration.  Our  preliminary calculations, 
show that while the $J$ map does not induce this reduction on cohomology, the induced map on $Ext$ nonetheless agrees with the map induced by reduction.
If this conjecture holds, we could  imagine the width filtration being connected to the chromatic filtration more broadly, as 
 some basic numerology suggests as well.
 
 \subsection{Thanks} I would like to thank my advisor Dev Sinha for all of his help and guidance, Hal Sadofsky for many helpful conversations, Dan Isaksen for confirming interest in the aspects of the Curtis-Wellington spectral sequence which make up the bulk of this work, Haynes Miller for providing references, and Peter May for clarifying some of the history.

\section{Review}\label{ReviewSec}
	Recall that $H^*(Q_0S^0, \F_2) \cong H^*(BS_\infty, \F_2)$. Recent calculations of cohomology of finite symmetric groups by taking all symmetric groups together and considering both cup product and a transfer or induction product gives a concise presentation.
	
\begin{theorem}[GSS]\label{GSSHopfRing}
	 As a Hopf ring, $\bigoplus_n H^{*}( B\si_{n}; \F_{2})$ is generated by classes \\* $\gamma_{\ell [n]}$ $\in H^{n(2^{\ell} - 1)}(B \si_{n 2^{\ell}})$, along with unit classes on each component. The coproduct of $\gamma_{\ell [n]}$ is given by 
	 $$\Delta \gamma_{\ell [n]} = \sum_{i+j = n} {\gamma_{\ell[i]}} \otimes {\gamma_{\ell[j]}}.$$
	Relations between transfer products of these generators are given by 
	$$\gamma_{\ell [n] } \tr \gamma_{\ell [m] } = \binom{n+m}{n} \gamma_{\ell [n+m]}.$$
	Relations between cup products of generators are that cup products of generators
on different components are zero.
\end{theorem}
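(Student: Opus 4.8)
The plan is to prove the presentation by detecting cohomology on elementary abelian subgroups and identifying the restricted images with Dickson algebras, using the two product structures to organize the bookkeeping. First I would fix the Hopf ring structure on $\bigoplus_n H^*(B\si_n;\F_2)$: the transfer product $\tr \colon H^*(B\si_n)\otimes H^*(B\si_m)\to H^*(B\si_{n+m})$ is induced by the transfer associated to $\si_n\times\si_m\hookrightarrow\si_{n+m}$; the cup product $\cupp$ is internal to each component; and the coproduct $\Delta$ is restriction along $\si_i\times\si_j\hookrightarrow\si_n$, which is exactly dual to the (homological) block-sum product. The Hopf ring axioms---in particular the distributivity relating $\cupp$ and $\tr$ through $\Delta$---follow formally from the distributivity of product over disjoint union for finite sets, i.e. from the bipermutative structure on the category of finite sets and bijections. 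I would record that the asserted coproduct formula is then consistent with $\sum_n \gamma_{\ell[n]}$ being grouplike, taking $\gamma_{\ell[0]}$ to be the unit.

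Next I would introduce the generators via restriction. Embed $V=(\Z/2)^{\ell}$ into $\si_{2^\ell}$ through the regular representation, so that $V$ acts freely transitively on $2^\ell$ points. By a theorem of Quillen, $H^*(B\si_n;\F_2)$ is detected on its elementary abelian subgroups, and since this ring is polynomial by Nakaoka it is reduced, so the total restriction map is injective. On the regular $V$ the image of $H^*(B\si_{2^\ell})\to H^*(BV)=\F_2[t_1,\dots,t_\ell]$ is the invariant ring under the Weyl group $N_{\si_{2^\ell}}(V)/C_{\si_{2^\ell}}(V)\cong GL_\ell(\F_2)$, namely the Dickson algebra $D_\ell$. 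I would define $\gamma_\ell=\gamma_{\ell[1]}$ to be the class restricting to the top Dickson invariant, the product of all nonzero classes in $H^1(BV)$, of degree $2^\ell-1$ (the mod-two Euler class of the reduced regular representation), and $\gamma_{\ell[n]}$ to be the corresponding divided-power class in $H^{n(2^\ell-1)}(B\si_{n2^\ell})$. The coproduct formula $\Delta\gamma_{\ell[n]}=\sum_{i+j=n}\gamma_{\ell[i]}\otimes\gamma_{\ell[j]}$ and the transfer relation $\gamma_{\ell[n]}\tr\gamma_{\ell[m]}=\binom{n+m}{n}\gamma_{\ell[n+m]}$ are then both verified by restriction to the block subgroups $\si_{i2^\ell}\times\si_{j2^\ell}$ and to $V^{\times(n+m)}$ respectively, where they reduce to the standard divided-power identities; the vanishing of cup products of generators on different components is immediate from the fact that $\Delta$ separates blocks together with the Hopf ring distributivity law.

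The crux is completeness: that these generators and relations exhaust $\bigoplus_n H^*(B\si_n)$. For this I would use Quillen's stratification to identify the detected image as a limit, over decompositions of $n$ into regular blocks giving $V_{\ell_1}\times\cdots\times V_{\ell_k}$, of tensor products $D_{\ell_1}\otimes\cdots\otimes D_{\ell_k}$ of Dickson algebras, equalized by the Weyl group actions and the incidence (double-coset) maps between block decompositions. I would then show the sub-Hopf-ring generated by the $\gamma_{\ell[n]}$ maps onto this limit with no further relations, matching Poincar\'e series degree by degree against Nakaoka's count of the polynomial generators of $H^*(B\si_n)$. Organizing this comparison is precisely the content of the skyline-diagram bookkeeping.

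The main obstacle I anticipate is this last step: controlling the detection map simultaneously for both products and verifying that the distributivity law is compatible with the Dickson-algebra images across overlapping block decompositions. As a cross-check and an alternative route to the additive count, I would dualize to homology, where $H_*(\coprod_n B\si_n;\F_2)$ is the free polynomial algebra under the block-sum product on the Dyer--Lashof classes $Q^I[1]$, read off the second product and coproduct there, and confirm that the $\gamma_{\ell[n]}$ are dual to the expected monomials, so that the two computations agree.
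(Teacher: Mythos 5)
This theorem is not proved in the paper at all: it is quoted verbatim from Giusti--Salvatore--Sinha \cite{GSS}, so the comparison has to be with their proof. Your route (detection on elementary abelian subgroups, identification of the images with Dickson algebras, completeness via Quillen stratification plus a Poincar\'e series count against Nakaoka) is genuinely different from theirs and is closer in spirit to Feshbach's invariant-theoretic computation of $H^*(B\si_n;\F_2)$. The GSS argument runs in the direction you relegate to a ``cross-check'': they start from the completely known homology $\bigoplus_n H_*(B\si_n;\F_2)$, free on admissible Dyer--Lashof monomials by Nakaoka, use Hopf ring distributivity to show that gathered monomials in the $\gamma_{\ell[n]}$ span, and establish linear independence by an explicit bijection between skyline diagrams and the dual Dyer--Lashof basis; restriction to the transitive elementary abelians and the Dickson algebras only enters later (their Corollary 7.6, quoted in Section \ref{WidthSSSec} here) to compute Steenrod operations, not to prove the presentation.

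Two concrete gaps in your version. First, your justification for detection is wrong as stated: $H^*(B\si_n;\F_2)$ for finite $n$ is \emph{not} polynomial (only the stable ring $H^*(B\si_\infty)$ is, by Nakaoka), so you cannot conclude reducedness that way; detection on elementary abelian $2$-subgroups is true but requires the separate wreath-product/transfer argument of Madsen--Milgram, and Quillen's theorem alone only gives injectivity modulo nilpotents. Second, the step you correctly identify as the crux --- identifying the equalizer over the Quillen category as a limit of tensor products of Dickson algebras compatible with both products, and matching its size against Nakaoka's count --- is not a bookkeeping afterthought but the entire content of the theorem by this route; it is exactly what made Feshbach's computation long and delicate, and your proposal gives no mechanism for carrying it out. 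Relatedly, the existence of the divided-power classes $\gamma_{\ell[n]}$ satisfying $\gamma_{\ell[n]}\tr\gamma_{\ell[m]}=\binom{n+m}{n}\gamma_{\ell[n+m]}$ is asserted rather than constructed; in \cite{GSS} these classes come for free as duals of powers of homology classes under the block-sum product, which is another reason the dual approach is the efficient one.
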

	
	These can be presented graphically, as ``skyline diagrams.''  The generators $\gamma_{\ell[n]}$ are represented by a rectangle of width $n\cdot 2^{\ell -1}$ and total area $n \cdot (2^{\ell}-1)$, so that the area of the rectangle corresponds to its degree in cohomology.  Cup product is indicated by vertical stacking to make columns, whose placement next to each other denotes transfer product.  We also draw in vertical dashed lines separating the block into $n$ equal sections, for purposes of coproduct. An example of such a diagram can be seen in Figure \ref{F:skylineEx}.
	
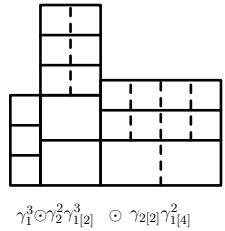
\begin{figure}[h]
\begin{center}
  \begin{tikzpicture}[line cap=round,line join=round,x=1.0cm,y=1.0cm, scale=0.4]

\draw [line width=1pt, color=black] (0,0) -- (1,0) -- (1,3) -- (0,3) -- cycle;
\draw [line width=1pt, color=black] (0,1) -- (1,1);
\draw [line width=1pt, color=black] (0,2) -- (1,2);

\node[align=center, scale=0.6] at (0.5,-1) {$\gamma_{1}^3 $};
\node[align=center, scale=0.6] at (1,-1) {$\odot$};

\draw [line width=1pt, color=black] (1,0) -- (3,0) -- (3,3) -- (1,3) -- cycle;
\draw [line width=1pt, color=black] (1,1.5) -- (3,1.5);
\draw [line width=1pt, color=black] (1,3) -- (3,3) -- (3,6) -- (1,6) -- cycle;
\draw [line width=1pt, color=black] (1,4) -- (3,4);
\draw [line width=1pt, color=black] (1,5) -- (3,5);
\draw [line width=1pt, dash pattern = on 3pt off 3pt, color=black] (2,3) -- (2,6);

\node[align=center, scale=0.6] at (2,-1) {$\gamma_{2}^2\gamma_{1[2]}^3$};
\node[align=center, scale=0.6] at (3.5,-1) {$\odot$};

\draw [line width=1pt, color=black] (3,0) -- (7,0) -- (7,3.5) -- (3,3.5) -- cycle;
\draw [line width=1pt, color=black] (3,1.5) -- (7,1.5);
\draw [line width=1pt, color=black] (3,2.5) -- (7,2.5);
\draw [line width=1pt, dash pattern = on 3pt off 3pt, color=black] (4,1.5) -- (4,3.5);
\draw [line width=1pt, dash pattern = on 3pt off 3pt, color=black] (5,0) -- (5,3.5);
\draw [line width=1pt, dash pattern = on 3pt off 3pt, color=black] (6,1.5) -- (6,3.5);

\node[align=center, scale=0.6] at (5,-1) {$ \gamma_{2[2]}\gamma_{1[4]}^2$};

\end{tikzpicture}
\caption{Skyline diagram for $\g_1^3 \odot \g_2\g_{1[2]}^3 \odot \g_{2[2]}\g_{1[4]}^2$}
\label{F:skylineEx}
\end{center}
\end{figure}

	The cohomology of the infinite symmetric group is the inverse limit
	\[ H^*(BS_\infty) = \varprojlim_n H^*(BS_n). \]
	The maps $H^*(BS_n) \to H^*(BS_m)$ for $n > m$ take a diagram in $H^*(BS_n)$  which has a ``tail''  of width   greater than or equal to $\frac{n-m}{2}$  (that is, a $\odot$-product factor of $1_k$ with $k > \frac{n-m}{2}$) to a diagram in $H^*(BS_m)$ obtained by shortening its tail to make it the appropriate width to be an element of $H^*(BS_m)$.  If a class is not such a transfer product with a sufficiently large unit class, it maps to zero.

	With restriction maps taking this form, the cohomology of $B\si_{\infty}$ could be viewed through such diagrams with ``infinitely long tails'', or in monomial form as  ``$\odot 1_\infty$''.   As they confer no additional information, we prefer to omit the tails altogether, only using them implicitly when we calculate through multiplication rules for finite groups. Using this presentation, we next recall another basic result of Giusti-Salvatore-Sinha, refining a classical result of Nakaoka.
 
	\begin{theorem}\label{cohomBSinfty}
	 	The cohomology of $BS_\infty$ is a polynomial algebra. 
Minimal generators of $H^*(BS_\infty)$ as an algebra under cup-product are represented graphically
by single columns with at least one block type appearing an odd number of times.
	\end{theorem}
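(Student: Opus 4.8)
The plan is to read off both statements from the Hopf ring presentation of Theorem~\ref{GSSHopfRing} together with the description of the restriction maps, representing classes in $H^*(BS_\infty)=\varprojlim_n H^*(B\si_n)$ by tailless skyline diagrams. By Theorem~\ref{GSSHopfRing} each $H^*(B\si_n)$ has an additive basis of skyline diagrams, that is, transfer products $C_1\tr\cdots\tr C_r$ of columns, a column being a cup product $\prod_\ell \gamma_{\ell[m]}^{e_\ell}$ of generators spanning one width; these survive to a spanning set in the limit. Granting that $H^*(BS_\infty)$ is polynomial --- this is Nakaoka's classical computation, which the theorem refines, and which one could also recover by comparing Poincar\'e series with $H_*(Q_0S^0)$ via Barratt--Priddy--Quillen --- it remains to show that the indecomposables $QH^*(BS_\infty)=H^+/(H^+\cdot H^+)$ are spanned by the single columns in which some block type occurs with odd multiplicity. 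I would split this into: (i) every transfer product of at least two columns is decomposable, so single columns span $QH^*$; and (ii) a single column all of whose multiplicities are even is decomposable.

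The engine for both is the Hopf ring distributivity law, that cup product distributes over transfer product through the coproduct,
\[ z\cdot(a\tr b)=\sum_{(z)}(z'\cdot a)\tr(z''\cdot b),\qquad \Delta z=\sum z'\otimes z'', \]
combined with the facts that $\Delta$ is multiplicative for $\tr$, that $\Delta 1_k=\sum_{i+j=k}1_i\otimes 1_j$, that cup products of classes supported on different components vanish, and the coproduct formula for $\gamma_{\ell[n]}$ from Theorem~\ref{GSSHopfRing}. For (i) I would argue by induction on the number of transfer factors. Given a nonzero $r$-column diagram $D=C_1\tr\cdots\tr C_r$, form the cup product $P=C_1\cdots C_r$ of its columns (as tailless classes); expanding $P$ by the distributivity law produces $D$ together with other diagrams. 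Ordering diagrams first by their number of columns and then lexicographically by the decreasing multiset of column widths, I expect $D$ to be the unique maximal term, occurring with coefficient one, all other terms having either strictly fewer columns (from stackings forced when tails push pieces onto a common component) or a strictly smaller width multiset (from splitting a generator through its coproduct). Since $P$ is a product of positive-degree classes, it is decomposable, and the remaining terms are decomposable by induction; hence $D$ is decomposable.

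Statement (ii) then follows from (i) with one more use of distributivity: if $C$ has all multiplicities even, let $D_0$ be the column with multiplicities halved and expand $D_0^2$. The contribution of the term $D_0\otimes 1$ in the diagonal is exactly the full vertical stacking $C$, with coefficient one, while every other contribution carries a nontrivial transfer product and hence has at least two columns; by (i) those are decomposable, and $D_0^2$ is a square, so $C$ is decomposable. Combining (i) and (ii), $QH^*(BS_\infty)$ is spanned by single columns having a block type of odd multiplicity; since $H^*(BS_\infty)$ is polynomial, a minimal generating set is a basis of $QH^*$ and may be chosen among these columns, which is the assertion.

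The main obstacle is the leading-term analysis in (i): because columns of equal width interact through the binomial coefficients $\binom{n+m}{n}$ of Theorem~\ref{GSSHopfRing}, a transfer product of like blocks can vanish and a cup product can collapse to a shorter stacking, so one must verify that the chosen order on diagrams genuinely makes the sorted transfer product the unique maximal term with unit coefficient --- equivalently, that the associated graded of the cup product on the spanning set of nonzero diagrams is the transfer product. Establishing this compatibility is the crux; the distributivity bookkeeping and the Frobenius step (ii) are then routine.
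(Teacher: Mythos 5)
Your overall strategy --- expand the cup product of the columns of a skyline diagram via Hopf ring distributivity and argue that the transfer product of those columns is the unique widest term --- is the same width-filtration argument the paper sketches (the paper gives only a two-sentence outline, deferring details to Giusti--Salvatore--Sinha), and you correctly single out the leading-term verification as the crux. But your intermediate claim (i), that every transfer product of at least two columns is decomposable, is false, and the error propagates. In degree $3$ one has
\[
\gamma_{1[1]}\cdot\gamma_{1[1]}^2 \;=\; \gamma_{1[1]}\tr\gamma_{1[1]}^2 \;+\; \gamma_{1[1]}^3,
\]
so the two-column diagram $\gamma_{1[1]}\tr\gamma_{1[1]}^2$ is congruent to the generator $\gamma_{1[1]}^3$ modulo decomposables and is therefore \emph{not} decomposable. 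The failure in your induction is visible in your own description of the expansion: among the ``other terms'' of $C_1\cdots C_r$ are the fully stacked, single-column diagrams, which have fewer transfer factors but are precisely the putative generators and are not covered by the inductive hypothesis. What your expansion actually yields --- and what suffices for spanning --- is the weaker statement that every multi-column diagram is congruent modulo decomposables to a sum of single-column diagrams. Step (ii) must then be restated in the same way, and it needs a well-founded ordering (say by width) to guarantee that the single-column error terms appearing there do not reintroduce all-even columns in a circular fashion.

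Even after these repairs, you have only shown that the odd-multiplicity single columns \emph{span} the indecomposables $\ind H^*(BS_\infty)$; the theorem asserts they are the minimal generators, i.e.\ a basis of the indecomposables, so linear independence modulo decomposables is still owed (or a Poincar\'e-series comparison with Nakaoka's answer, which you mention but do not carry out). The paper's route closes both gaps at once: a product of the candidate generators has the corresponding multi-column skyline diagram as its unique widest term, so monomials in these classes are upper-triangular with respect to the skyline basis. This gives algebraic independence (hence independence in the indecomposables) directly, and together with the filtration argument for surjectivity it re-proves polynomiality rather than importing it from Nakaoka.
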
 
	
	These minimal generators form a basis for the indecomposibles of the cohomology of $BS_\infty$, which we call the Nakaoka module  $\mathfrak{N}$.

The idea of proof is that any product of such single-column diagrams will result in a sum of diagrams, the widest of which is the original skyline diagram. 
This shows such products are algebraically independent, and a simple filtration argument shows the polynomial ring they generate exhausts the cohomology.
For example, the skyline diagam from Figure \ref{F:skylineEx}, is the product of its columns, $\g_1^3$, $\g_{2}^2 \g_{1[2]}^3$, and $\g_{2[2]}\g_{1[4]}^2$ plus lower-width terms. 
	
	In the same paper, Giusti-Salvatore-Sinha describe the Steenrod algebra action on the cohomology of symmetric groups in terms of the basis elements $\g_{\ell[2^k]}$.  Because there are Cartan formulae for both cup and transfer product, this determines Steenrod structure on the whole.
	
	\begin{definition} (i) The algebraic degree of a gathered block is the total number of Hopf ring generators cup-multiplied to make the gathered block. \\
 (ii) The height of one of these skyline diagrams is the largest of the algebraic degrees of its constituent gathered blocks. \\
 (iii) The effective scale of a gathered block, composed of $\gamma_{\ell, n}$ cup-multiplied together, is the largest such $\ell$ that occurs in the block. The effective scale of a gathered monomial is the minimum of the effective scales of its gathered blocks. \\
 (iv) A monomial is full width as long as it is not a non-trivial transfer product of some monomial with some unit class $1_k$. 
	 \end{definition}

	\begin{theorem}[Theorem 8.3 of \cite{GSS}]\label{SteenrodAction}
	The Steenrod square $\Sq^i \gamma_{\ell[2^k]}$ is the sum of all full-width monomials of total degree $2^k(2^\ell -1 ) + i$, height one or two, and effective scale at least $\ell$, with height two only allowed if the effective scale $= \ell$. 
	\end{theorem}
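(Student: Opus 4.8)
The plan is to reduce to the generators and then detect everything on elementary abelian subgroups. Since the excerpt records Cartan formulae for both the cup and transfer products, knowing $\Sq^i$ on each $\gamma_{\ell[2^k]}$ determines the Steenrod structure on all of $\bigoplus_n H^*(B\si_n;\F_2)$, so it suffices to establish the stated formula on these generators. The first step is therefore to fix, for each $\gamma_{\ell[2^k]} \in H^{2^k(2^\ell-1)}(B\si_{2^{k+\ell}})$, a faithful system of elementary abelian subgroups on which to compute. By Quillen's detection theorem the restriction map to the maximal elementary abelians is an $F$-isomorphism, and for the classes at hand I would check that restriction to the rank-$(k+\ell)$ elementary abelian sitting in $\si_{2^{k+\ell}}$ via its regular representation is injective in the relevant internal degree, so that coefficients can be read off honestly rather than merely modulo nilpotents.

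Next I would identify these restrictions explicitly. The basic generator $\gamma_\ell$ has degree $2^\ell - 1$, which is exactly the degree of the top Dickson invariant $c_{\ell,0} = \prod_{0 \neq v} v \in \F_2[x_1,\ldots,x_\ell]^{GL_\ell(\F_2)}$, the product of all nonzero linear forms; the content of this step is that $\gamma_\ell$ restricts precisely to $c_{\ell,0}$, and that the regular-representation structure sends $\gamma_{\ell[2^k]}$ to the corresponding Dickson-type class in rank $k+\ell$. Granting this, the Steenrod computation becomes elementary: since $\Sq(v) = v(1+v)$ for a degree-one class, the total square of the product of linear forms factors as $\Sq(c_{\ell,0}) = c_{\ell,0}\cdot\prod_{0\neq v}(1+v) = c_{\ell,0}\sum_{i=0}^{\ell} c_{\ell,i}$, using the defining generating function $\prod_v (X+v) = \sum_i c_{\ell,i}X^{2^i}$ for the Dickson invariants evaluated at $X=1$. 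Thus on each such elementary abelian $\Sq^i \gamma_\ell$ is the degree-$(2^\ell-1+i)$ part of a sum of products $c_{\ell,0}c_{\ell,j}$ of at most two Dickson generators.

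The final step is to translate this polynomial answer back into the skyline basis and extract the three stated constraints. A product of at most two Dickson generators is visibly of algebraic degree at most two, which I expect to be the source of the ``height one or two'' condition; the bound on the effective scale should come from tracking which $\gamma_{\ell'[2^{k'}]}$ can restrict nontrivially to a class supported on the chosen elementary abelian, forcing $\ell' \ge \ell$, and the refinement that height two occurs only when the effective scale equals $\ell$ should fall out of the observation that a genuinely two-generator term can only be hit when both Dickson factors live at the bottom scale. That each surviving monomial appears with coefficient exactly one is forced once detection is known to be injective in the relevant degree. I expect the main obstacle to be precisely this last bookkeeping: proving that the full-width monomials predicted by the Dickson computation are exactly those of the asserted total degree, height, and effective scale, with no spurious terms and all coefficients equal to one. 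This demands a simultaneous analysis of the restriction map on all full-width monomials of the given degree, and it is here that the combinatorics of the skyline presentation, rather than the Steenrod calculation itself, does the real work. An alternative route worth keeping in mind is the dual one: since $H^*(Q_0S^0)$ is dual to the Dyer-Lashof-generated homology, the action of $\Sq^i$ is dual to the Nishida relations, so one could instead derive the formula by dualizing the known action of the Steenrod operations on Dyer-Lashof monomials.
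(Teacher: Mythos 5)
This statement is not proved in the paper at all: it is quoted verbatim as Theorem 8.3 of \cite{GSS}, so there is no internal proof to compare against. Judged on its own terms, your proposal follows the general philosophy that GSS themselves use (detection on elementary abelian subgroups plus the classical computation of Steenrod squares on Dickson invariants), but as written it has a genuine gap at its central step. You propose to detect everything by restricting to the single transitive elementary abelian $V_{k+\ell}\subset\si_{2^{k+\ell}}$ and assert that this restriction is injective in the relevant degree. That is false: Quillen's theorem gives an $F$-isomorphism onto the inverse limit over \emph{all} elementary abelian subgroups, and restriction to the one transitive $V_{k+\ell}$ has a large kernel even in low degrees (already $H^2(B\si_4)$ surjects onto a smaller space of $GL_2$-invariants). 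More to the point, the theorem's answer necessarily contains full-width monomials that are nontrivial transfer products --- e.g.\ $\gamma_{3[1]}\tr\gamma_{2[1]}\gamma_{1[2]}\tr\gamma_{2[1]}$ in $\Sq^3\gamma_{2[4]}$ --- i.e.\ classes induced from proper Young subgroups. Their restriction to the transitive $V_{k+\ell}$ is computed by a double coset formula and lands in images of transfers from proper subgroups; it is not the clean product of Dickson generators your calculation produces, and in general these summands are simply invisible there. To separate and certify all the candidate monomials you must restrict to the whole family $V_{n_1}\times\cdots\times V_{n_r}$ indexed by partitions of $2^{k+\ell}$ into powers of two, on which transfer products restrict to (sums of) tensor products of Dickson classes. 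That partition-indexed analysis is precisely the ``bookkeeping'' you defer to the last paragraph, and it is where the actual content of the theorem lives.

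Two smaller issues: your Dickson computation $\Sq(c_{\ell,0})=c_{\ell,0}\sum_i Q_{\ell,i}$ only treats the case $k=0$, since $\gamma_{\ell[2^k]}$ restricts to the generator $Q_{k+\ell,k}$ of degree $2^k(2^\ell-1)$, not the top class, and the total square of a non-top Dickson generator is a more involved formula (cf.\ H\uppercase{u}ng's work cited in the paper). And the claim that ``each surviving monomial appears with coefficient exactly one'' does not follow from injectivity of a single restriction; it requires matching coefficients across the whole detecting family. The alternative route you mention --- dualizing the Nishida relations --- is viable in principle but is exactly the computation the cohomological approach is designed to avoid, as the paper notes in its discussion of Wellington's difficulties.
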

	
	For example, Figure \ref{F:steenrod} illtustrates the three summands of $\Sq^3(\g_{2[4})$. 
	\[ \Sq^3(\gamma_{2[4]}) = \gamma_{4[1]} + \gamma_{3[1]} \tr \gamma_{2[1]}\gamma_{1[2]} \tr \gamma_{2[1]} + \gamma_{2[1]}^2 \tr \gamma_{2[1]} \tr \gamma_{2[2]} . \]

\begin{figure}[h]
\begin{center}
  \begin{tikzpicture}[line cap=round,line join=round,x=1.0cm,y=1.0cm, scale=0.35]

\node[align=center, scale=1] at (-13,0.75) {$\Sq^3$ (};
\draw [line width=1pt, color=black] (-11,0) -- (-3,0) -- (-3,1.5) -- (-11,1.5) -- cycle;
\draw [line width=1pt, dash pattern = on 3pt off 3pt, color=black] (-9,0) -- (-9,1.5);
\draw [line width=1pt, dash pattern = on 3pt off 3pt, color=black] (-7,0) -- (-7,1.5);
\draw [line width=1pt, dash pattern = on 3pt off 3pt, color=black] (-5,0) -- (-5,1.5);
\node[align=center, scale=1] at (-1.5,0.75) {) = };

\node[align=center, scale=0.7] at (-7,-1) {$\gamma_{2[4]}$};
    
\draw [line width=1pt, color=black] (0,0) -- (8,0) -- (8,1.875) -- (0,1.875) -- cycle;

\node[align=center, scale=0.7] at (4,-1) {$\gamma_{4[1]}$};
\node[align=center, scale=1] at (9,0.75) { + };

\draw [line width=1pt, color=black] (10,0) -- (14,0) -- (14,1.75) -- (10,1.75) -- cycle;
\draw [line width=1pt, color=black] (14,0) -- (16,0) -- (16,2.5) -- (14,2.5) -- cycle;
\draw [line width=1pt, color=black] (14,1.5) -- (16,1.5);
\draw [line width=1pt, dash pattern = on 3pt off 3pt, color=black] (15,1.5) -- (15,2.5);
\draw [line width=1pt, color=black] (16,0) -- (18,0) -- (18,1.5) -- (16,1.5) -- cycle;

\node[align=center, scale=0.7] at (14,-1) {$\gamma_{3[1]} \tr \gamma_{2[1]}\gamma_{1[2]} \tr \gamma_{2[1]}$};
\node[align=center, scale=1] at (19,0.75) { + };

\draw [line width=1pt, color=black] (20,0) -- (22,0) -- (22,3) -- (20,3) -- cycle;
\draw [line width=1pt, color=black] (20,1.5) -- (22,1.5);
\draw [line width=1pt, color=black] (22,0) -- (24,0) -- (24,1.5) -- (22,1.5) -- cycle;
\draw [line width=1pt, color=black] (24,0) -- (28,0) -- (28,1.5) -- (24,1.5) -- cycle;
\draw [line width=1pt, dash pattern = on 3pt off 3pt, color=black] (26,0) -- (26,1.5);

\node[align=center, scale=0.7] at (24,-1) {$ \gamma_{2[1]}^2 \tr \gamma_{2[1]} \tr \gamma_{2[2]}$};

\end{tikzpicture}
\caption{Skyline diagrams for the three summands of $\Sq^3(\gamma_{2[4]})$
}
\label{F:steenrod}
\end{center}
\end{figure}

It is straightforward to use Cartan formulae to calculate Steenrod action on indecomposibles $\mathfrak{N}$.
This gives a refinement of Nakaoka's work, which only determined this module additively, and is a much
more accessible presentation than of the homology primitives \cite{CLM,Madsen}.  Indeed, much
of Wellington's work on the CWSS is devoted to calculations with these primitives which are simplified or become
immaterial through this cohomology approach.

\section{Width spectral sequence}\label{WidthSSSec}
	In this section, we  equate the Curtis-Wellington $E_2$ with an explicit $\Ext$ group in the category of unstable modules over the Steenrod algebra, at which point there is an immediate filtration to develop. Recall that work of \cite{6Author}  and others proves that for simply connected $X$ with $\pi_*(X)$ of finite type, there is an unstable analog to the Adams spectral sequence with $E_2 \cong \Ext^{s,t}_{\ua}(H^*(X), \F_2)$ converging to $\pi_*(X)$. 
	
	 There have been relatively few computations made of the unstable Adams spectral sequence, with some explicit calculations of Curtis and Mahowald and Miller's proof of the Sullivan Conjecture being spectacular exceptions. A main roadblock is that the $\Ext$ groups which occur, which we call $\Ext_{\ua}$ for the category of unstable algebras over the Steenrod algebra, are not $\Ext$ groups in the usual sense of derived homomorphisms in an abelian category. While Goerss established that they are a ``non-abelian" derived $\Hom$, in the sense of Quillen, in the category of simplicial algebras over $\A$, this has not to our knowledge been used in any way for calculations. 
	
	To make calculations, one hopes for equivalent $\Ext$ calculations in abelian categories. The category of unstable modules over the Steenrod algebra, $\cu$, is abelian and there is a free unstable algebra functor $\cu \to \ua$. However, the cohomology of a space is very rarely in the image of this functor, even if it is free. If the cohomology is free only as an algebra, there is still an alternate form of reduction. Let $A$ be an augmented algebra with $\overline{A}$ its augmentation ideal. We let $\ind A$ denote the algebra indecomposables $\overline{A}/(\overline{A} \cdot \overline{A})$. Note that $\Sigma^{-1} \ind A$ is naturally in $\cu$. The following was originally stated by Bousfield in \cite{Bousfield} and follows from the composite functor spectral sequence constructed by Miller \cite{Miller}. 
	
	\begin{proposition}\label{abelianE2}
		Let $P$ be an unstable algebra over $\A$ that is free as an algebra on $\ind P$.Then 
		\[ \Ext^{s,t}_{\ua}(P,k) \cong \Ext^{s,t-1}_{\cu}(\Sigma^{-1} {\ind} P,k) \]
	\end{proposition}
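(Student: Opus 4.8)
The plan is to realize $\Ext_{\ua}$ as a non-abelian derived functor and then to collapse a composite functor spectral sequence using the freeness hypothesis. Following Goerss and Quillen, $\Ext^{s,t}_{\ua}(P,k)$ is computed from the standard cotriple resolution $T_\bullet P \to P$, where $T = UV$ is the comonad on $\ua$ built from the free unstable algebra functor $U\colon \cu \to \ua$ and its right adjoint, the forgetful functor $V\colon \ua \to \cu$; concretely one applies the $\A$-algebra derivation functor $\mathrm{Der}_\A(-,k)$ levelwise and takes cohomotopy. First I would record the standard identification that the abelianization, or cotangent object, of $P$ in $\ua$ is exactly $\Sigma^{-1}\ind P$, so that a derivation into the square-zero coefficient $k$ is determined by its value on indecomposables and $\mathrm{Der}_\A(P,k) \cong \Hom_{\cu}(\Sigma^{-1}\ind P, k)$. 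This is the source of the explicit desuspension appearing in the statement.

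Next I would invoke Miller's composite functor spectral sequence. The assignment $P \mapsto \Ext^{*}_{\ua}(P,k)$ is the derived functor of the composite $\Hom_{\cu}(-,k)\circ Q$, where $Q = \Sigma^{-1}\ind \colon \ua \to \cu$ is the linear indecomposables functor. Deriving each factor separately produces a Grothendieck-type spectral sequence
\[ E_2^{p,q} = \Ext^p_{\cu}\big(L_q Q\,(P),\, k\big) \Longrightarrow \Ext^{p+q}_{\ua}(P,k), \]
where $L_q Q\,(P)$ denotes the $q$-th non-abelian left derived functor of indecomposables, computed as $\pi_q$ of $Q(T_\bullet P)$. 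The finite-type hypotheses in force guarantee convergence.

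The heart of the argument is to show that when $P$ is free as an algebra on $\ind P$, these derived indecomposables are concentrated in homological degree zero: $L_0 Q\,(P) = \Sigma^{-1}\ind P$ and $L_q Q\,(P) = 0$ for $q > 0$. Equivalently, the augmented simplicial unstable module $Q(T_\bullet P) \to \Sigma^{-1}\ind P$ is acyclic. I would establish this by comparing the cotriple resolution with an explicit Koszul-type resolution adapted to the polynomial structure: for an object free as a commutative algebra the Andr\'e--Quillen homology over $\F_2$ vanishes above degree zero, and one then checks that passing to unstable $\A$-algebras, i.e. imposing the instability relation $\Sq^{|x|}x = x^2$, contributes no higher homology, since that relation is decomposable and is already absorbed into the module structure on $\Sigma^{-1}\ind P$. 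Granting this, the spectral sequence collapses onto the line $q=0$, yielding $\Ext^{s}_{\ua}(P,k) \cong \Ext^{s}_{\cu}(\Sigma^{-1}\ind P, k)$, and the internal-degree bookkeeping (the desuspension together with the grading convention for the unstable Adams $E_2$) produces the stated shift $t \mapsto t-1$.

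I expect the vanishing of the higher derived indecomposables to be the main obstacle, precisely because $P$ is assumed free only as a commutative algebra and not as a free unstable $\A$-algebra. The subtle point is to verify that the instability and restriction relations do not generate syzygies contributing to $L_q Q$ for $q>0$; this is exactly the content of Bousfield's reduction and is where the special structure of $\cu$ and of $U$ must genuinely be used, rather than formal homological algebra alone. A secondary, purely bookkeeping, obstacle is reconciling the $\Sigma^{-1}$ already present in the cotangent object with the grading normalization so as to produce the final $t-1$.
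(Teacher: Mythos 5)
Your proposal follows essentially the same route the paper takes: the paper does not prove this proposition but attributes it to Bousfield and notes it ``follows from the composite functor spectral sequence constructed by Miller,'' which is exactly the argument you outline (identifying the cotangent object with $\Sigma^{-1}\ind P$, running the composite functor spectral sequence, and collapsing it via vanishing of higher derived indecomposables when $P$ is free as an algebra). Your identification of the vanishing of $L_qQ$ for $q>0$ as the genuine content, rather than formal homological algebra, is the correct reading of where Bousfield's and Miller's work is actually used.
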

	The unstable Asams spectral sequence, for $X = Q_0S^0$ calculates unstable homotopy groups, and we call this the Curtis-Wellington spectral sequence for stable homotopy groups. Nakaoka calculated \cite{Nakaoka71, Nakaoka72} that $H^*(Q_0S^0, \F_2)$ is free on its indecomposables, $\ind H^*(Q_0S^0, \F_2)$, which we will refer to as the Nakaoka module, $\mathfrak{N}$. Thus, Proposition \ref{abelianE2} applies to give the following reduction to a calculation in  the abelian category $\cu$
	
	\begin{proposition}
	The $E_2$ term of the Curtis-Wellington spectral sequence is isomorphic to $\Ext^{s,t}_{\cu}(\Sigma^{-1}\mathfrak{N})$.
	\end{proposition}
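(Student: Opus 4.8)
The plan is to assemble two ingredients already in hand: the identification of the Curtis-Wellington $E_2$ as an $\Ext$ group in the category $\ua$, and the reduction of that group to the abelian category $\cu$ afforded by Proposition~\ref{abelianE2}. By construction the Curtis-Wellington spectral sequence is the unstable Adams spectral sequence for $X = Q_0 S^0$, so its $E_2$ term is $\Ext^{s,t}_{\ua}(H^*(Q_0S^0, \F_2), \F_2)$; the entire content of the statement is therefore to rewrite this $\ua$-$\Ext$ as an $\Ext$ over $\cu$ of the desuspended Nakaoka module.

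First I would verify the hypothesis of Proposition~\ref{abelianE2}, namely that $P := H^*(Q_0S^0, \F_2)$ is free as an algebra on its indecomposables. This is exactly Nakaoka's theorem, recalled here as Theorem~\ref{cohomBSinfty} (via the isomorphism $H^*(Q_0S^0,\F_2) \cong H^*(BS_\infty, \F_2)$ of Barratt-Priddy-Quillen): the cohomology is a polynomial algebra, hence free as an algebra, and its module of indecomposables $\ind P$ is by definition the Nakaoka module $\mathfrak{N}$. It is worth emphasizing that Proposition~\ref{abelianE2} requires only freeness \emph{as an algebra}, not as an unstable algebra over $\A$; the latter would fail, but polynomiality is all that is needed.

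With the hypothesis checked, I would apply Proposition~\ref{abelianE2} directly with $P = H^*(Q_0S^0,\F_2)$ and $k = \F_2$. Since $\ind P = \mathfrak{N}$, the proposition yields
\[ \Ext^{s,t}_{\ua}(H^*(Q_0S^0,\F_2), \F_2) \cong \Ext^{s,t-1}_{\cu}(\Sigma^{-1}\mathfrak{N}, \F_2), \]
and combining this with the identification of the left-hand side as the $E_2$ term gives the claim, where the internal degree is normalized so that $t$ indexes the grading of $\Sigma^{-1}\mathfrak{N}$ already incorporating the desuspension shift.

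The step requiring the most care is not the formal manipulation but checking that the unstable Adams spectral sequence, and with it Proposition~\ref{abelianE2}, genuinely applies to $Q_0S^0$. The cited existence result is stated for simply connected spaces of finite type, whereas $Q_0 S^0$ has $\pi_1 = \pi_1^s = \Z/2 \neq 0$. Because $Q_0S^0$ is an infinite loop space it is nilpotent, with $\pi_1$ acting trivially on all homotopy groups, so the appropriate nilpotent (or component-of-an-infinite-loop-space) version of the construction still produces a spectral sequence with the stated $E_2$; I would cite the relevant generalization and confirm the finite-type condition on $H^*(Q_0S^0,\F_2)$, which follows from the degrees of the generators $\gamma_{\ell[n]}$. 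Once this is in place, the remainder of the argument is the routine assembly above.
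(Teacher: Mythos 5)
Your proposal is correct and takes essentially the same route as the paper, which simply invokes Nakaoka's result that $H^*(Q_0S^0,\F_2)$ is polynomial (hence free on its indecomposables $\mathfrak{N}$) and then applies Proposition~\ref{abelianE2} to identify the unstable Adams $E_2$ with $\Ext_{\cu}(\Sigma^{-1}\mathfrak{N})$. Your added attention to the failure of simple connectivity of $Q_0S^0$ and the need for the nilpotent version of the unstable Adams spectral sequence is a point the paper passes over silently, but it does not change the argument.
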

	
	Our graphical skyline diagram presentation, in particular of the indecomposables as stated in Theorem \ref{cohomBSinfty}, points immediately to a filtration of the Nakaoka module.  
	\begin{definition}\label{D:widthFiltration}
	 Let $F_n$ be the submodule of $\mathfrak{N}$ of elements of width less than or equal to $2^{n-1}$.	\end{definition}
	 
	  This is a submodule as unstable Steenrod modules by the formula from Theorem \ref{SteenrodAction}. Moreover, $F_n/F_{n-1}$ will be spanned by single columns of width exactly $2^{n-1}$ with at least one block type appearing an odd number of times.

Let $V_n$ be the transitive elementary abelian 2-subgroup of $S_{2^n}$.  Because restriction to a subgroup
maps to invariants of an action by the normalizer of the subgroup (see \cite{AdemMilgram}) in this case
the restriction map is received by rings of Dickson invariants.  But Corollary 7.6 of \cite{GSS}, follows an
argument of Milgram to show that restriction of $\g_{\ell, 2^k}$ with $\ell + k = n$ to  $V_n$ is the Dickson class $d_{k,l}$.  Thus single-column diagrams go to corresponding products of Dickson classes, and these single column skyline diagrams are isomorphic to a quotient of $D_n$ as a module over the Steenrod algebra. In particular, we have the following.

\begin{proposition} The quotient $F_n/F_{n-1}$ is isomorphic to $D_n^o$, where $D_n^o$ is the quotient of $D_n$ by all perfect squares.
\end{proposition}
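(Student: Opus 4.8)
The plan is to build an explicit isomorphism in $\cu$ by first matching bases and then transporting the Steenrod action across restriction to $V_n$. By Theorem~\ref{cohomBSinfty} together with Definition~\ref{D:widthFiltration}, a basis of $F_n/F_{n-1}$ is the set of single columns of width exactly $2^{n-1}$ in which some block type occurs an odd number of times. Every block in such a column has width $2^{n-1}$, which forces $k+\ell=n$ for each constituent block $\g_{\ell[2^k]}$, so the column is a cup-product $\prod_{\ell=1}^{n}\g_{\ell[2^{n-\ell}]}^{a_\ell}$ with at least one $a_\ell$ odd. On the other side, $D_n$ is polynomial on the $n$ Dickson generators $d_{n-\ell,\ell}$ in degrees $2^n-2^{n-\ell}$; in characteristic two a monomial is a perfect square exactly when all its exponents are even, and the squares form a linear subspace since the Frobenius is additive, so $D_n^o$ has basis the monomials $\prod_\ell d_{n-\ell,\ell}^{a_\ell}$ with some $a_\ell$ odd. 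Because $\deg\g_{\ell[2^{n-\ell}]}=2^n-2^{n-\ell}=\deg d_{n-\ell,\ell}$, the assignment $\prod_\ell\g_{\ell[2^{n-\ell}]}^{a_\ell}\mapsto\prod_\ell d_{n-\ell,\ell}^{a_\ell}$ is a degree-preserving bijection of bases, giving a graded vector-space isomorphism $\Phi\colon F_n/F_{n-1}\to D_n^o$.

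Next I would realize $\Phi$ through the restriction map and check that its target is a legitimate object of $\cu$. By Corollary 7.6 of \cite{GSS}, restriction to the transitive subgroup $V_n\subset\si_{2^n}$ is an $\A$-algebra map $r\colon H^*(B\si_{2^n})\to D_n$ sending $\g_{\ell[2^{n-\ell}]}$ to $d_{n-\ell,\ell}$; since $r$ is multiplicative it carries the column $\prod_\ell\g_{\ell[2^{n-\ell}]}^{a_\ell}$ to $\prod_\ell d_{n-\ell,\ell}^{a_\ell}$, so composing $r$ with the projection $D_n\to D_n^o$ recovers $\Phi$ on representatives. The standard identities $\Sq^{2a}(f^2)=(\Sq^a f)^2$ and $\Sq^{2a+1}(f^2)=0$ show that the subspace of squares is an $\A$-submodule, so $D_n^o$ inherits an unstable module structure and the $\A$-algebra isomorphism of $D_n$ with the polynomial subalgebra generated by the $\g_{\ell[2^{n-\ell}]}$ descends to it.

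It remains to prove that $\Phi$ is $\A$-linear, which is the heart of the matter. Here the shape of the Steenrod action is favorable: for a width-$2^{n-1}$ column $c$ on the component $\si_{2^n}$, the Cartan formula and Theorem~\ref{SteenrodAction} show that $\Sq^i c$ is a sum of full-width monomials, all again on the component $\si_{2^n}$ (Steenrod operations preserve the component) and hence all of width $2^{n-1}$. Passing to the subquotient $F_n/F_{n-1}$ of $\mathfrak N$ discards the decomposable summands and keeps exactly the single-column summands in which some block occurs an odd number of times; the all-even single columns are squares and die, and the genuine transfer products are decomposable and die. On the Dickson side $\Sq^i[r(c)]=[r(\Sq^i c)]$ by $\A$-linearity of $r$ and reduction modulo squares, and $r$ sends each single-column summand to the matching Dickson monomial, keeping the odd ones and killing the even ones after reduction. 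Thus the identity $\Phi\,\Sq^i=\Sq^i\,\Phi$ reduces to the single claim that $r$ annihilates the transfer-product summands.

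I expect that last claim to be the main obstacle. It is the assertion that a nontrivial transfer product restricts trivially to the transitive elementary abelian subgroup $V_n$, which I would establish by a double-coset (Mackey) analysis as in \cite{AdemMilgram}: a transfer product is induced from a proper Young subgroup $\si_a\times\si_b$, and because $V_n$ acts transitively it is subconjugate to no such subgroup of full rank, so the relevant restricted-induced classes come from proper subgroups of $V_n$ and therefore vanish in $D_n^o$. This is precisely the geometric reason the Dickson algebra appears at all, and the bookkeeping that the passage to indecomposables of width $2^{n-1}$ and the passage to $D_n^o$ via $r$ kill the \emph{same} summands of $\Sq^i c$ is what I would verify most carefully. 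Once this is in hand, $\Phi$ is an isomorphism in $\cu$, establishing $F_n/F_{n-1}\cong D_n^o$.
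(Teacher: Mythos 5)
Your argument follows the paper's own route: both identify $F_n/F_{n-1}$ with $D_n^o$ by restricting to the transitive elementary abelian subgroup $V_n$, invoking Corollary 7.6 of \cite{GSS} to match width-$2^{n-1}$ single columns with Dickson monomials and the ``some exponent odd'' condition with the complement of the perfect squares. The ``main obstacle'' you flag --- that restriction annihilates the transfer-product (and lower-width) summands, which the paper leaves implicit --- does close exactly as you suggest, and in fact more cheaply than a full Mackey bookkeeping: the double-coset terms are transfers from proper subgroups of $V_n$, and with $\F_2$ coefficients every such transfer already vanishes in $H^*(BV_n;\F_2)$ by the Gysin sequence (multiplication by a nonzero linear form is injective on a polynomial ring), so your map is $\A$-linear as claimed.
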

This filtration allows us to consider only full width terms in the image of the Steenrod action, substantially simplifying calculations. Assembling the long exact sequences in cohomology associated to the short exact sequences 
	\[ 0 \to F_{n-1} \to F_n \to  D_n^o \to 0 \]
	from the filtration described above produces a tri-graded spectral sequence converging to the $E_2$ term of the Curtis-Wellington spectral sequence for stable homotopy.
	
	\begin{theorem}\label{WidthSS}
	The spectral sequence associated to the width filtration has 
	\[ E_1^{s, t;n } = \Ext_{\cu}^{s,t}( D_n^o, \F_2) \]
	and $d_r: \Ext^{s,t}( D_n^o) \to \Ext^{s+1,t}( D_{n+r}^o).$ It  converges to 
$\Ext^{s,t}_{\cu}(\Sigma^{-1} \mathfrak{N})$, and thus the $E_2$ of the Curtis-Wellington spectral sequence.
	\end{theorem}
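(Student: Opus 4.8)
The plan is to realize the stated spectral sequence as the one associated to the exact couple of the filtered object $\Sigma^{-1}\mathfrak{N}$, and then to identify the abutment using Proposition \ref{abelianE2}. The essential algebraic input has already been supplied: each $F_n$ is a sub-$\cu$-module of $\mathfrak{N}$ and $F_n/F_{n-1} \cong D_n^o$. Since $\Sigma^{-1}$ is exact and merely shifts internal degree, desuspending produces, for every $n$, a short exact sequence in $\cu$
\[ 0 \to \Sigma^{-1}F_{n-1} \to \Sigma^{-1}F_n \to \Sigma^{-1}D_n^o \to 0, \]
with $\Sigma^{-1}F_0 = 0$ and $\bigcup_n F_n = \mathfrak{N}$. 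These are the building blocks of the construction.

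First I would apply the contravariant functor $\Ext_{\cu}(-,\F_2)$ to obtain, for each $n$, a long exact sequence
\[ \cdots \to \Ext^s_{\cu}(\Sigma^{-1}D_n^o) \xrightarrow{k} \Ext^s_{\cu}(\Sigma^{-1}F_n) \xrightarrow{i} \Ext^s_{\cu}(\Sigma^{-1}F_{n-1}) \xrightarrow{j} \Ext^{s+1}_{\cu}(\Sigma^{-1}D_n^o) \to \cdots, \]
and assemble these over all $n$ into an exact couple with $A^{s,n} = \Ext^s_{\cu}(\Sigma^{-1}F_n)$ and $E^{s,n} = \Ext^s_{\cu}(\Sigma^{-1}D_n^o)$. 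Here $i$ lowers $n$ by one, $k$ preserves $n$, and the connecting map $j$ raises both $s$ and $n$ by one; every map preserves the internal degree $t$ because it comes from a long exact $\Ext$-sequence of graded modules. The $E_1$ page is then $\Ext_{\cu}^{s,t}$ of the successive subquotients $\Sigma^{-1}D_n^o$, which is the claimed $E_1$ up to the standard suspension reindexing of Proposition \ref{abelianE2}. The $r$-th differential of the derived couple is $d_r = j \circ i^{-(r-1)} \circ k$, and tracking the degree shifts above gives precisely $d_r \colon \Ext^{s,t}(D_n^o) \to \Ext^{s+1,t}(D_{n+r}^o)$.

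The hard part will be convergence, since $\Ext_{\cu}(-,\F_2)$ is contravariant and need not commute with the colimit $\mathfrak{N} = \varinjlim_n F_n$. I would resolve this with a finiteness observation: a generator $\gamma_{\ell[2^k]}$ contributing to width $2^{n-1}$ has $k+\ell = n$ and internal degree $2^{n-\ell}(2^\ell-1) = 2^n - 2^{n-\ell}$, minimized at $\ell = 1$ to give $2^{n-1}$, so $D_n^o$ (hence $\Sigma^{-1}D_n^o$) is concentrated in internal degrees bounded below by roughly $2^{n-1}$. Consequently, in each fixed internal degree $t$ only finitely many subquotients are nonzero and the filtration $\{F_n\}$ stabilizes; the exact couple is therefore finite in every bidegree $(s,t)$, so the relevant $\varprojlim$ is the stable value, the corresponding $\varprojlim^1$ vanishes, and the spectral sequence converges strongly to $\Ext_{\cu}^{s,t}(\Sigma^{-1}\mathfrak{N})$. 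Finally I would invoke the identification of the Curtis--Wellington $E_2$-term with $\Ext_{\cu}^{s,t}(\Sigma^{-1}\mathfrak{N})$ established above (via Proposition \ref{abelianE2} and Nakaoka's freeness of $H^*(Q_0S^0,\F_2)$ on $\mathfrak{N}$) to conclude that the abutment is the $E_2$ of the Curtis--Wellington spectral sequence, completing the proof.
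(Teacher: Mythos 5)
Your proposal is correct and takes essentially the same route as the paper, which simply assembles the long exact $\Ext_{\cu}$-sequences of the short exact sequences $0 \to F_{n-1} \to F_n \to D_n^o \to 0$ into an exact couple and reads off the $E_1$-term, the differentials, and the abutment. Your explicit convergence argument --- that $D_n^o$ is concentrated in internal degrees at least $2^{n-1}$, so the filtration is finite in each degree and the $\varprojlim^1$ issue disappears --- is a detail the paper leaves implicit, and is a worthwhile addition.
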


	Using the well-known Steenrod structure on Dickson algebras, as presented for example in \cite{Hung}, we have been able to make hand calculations out to the 17 stem. Hood Chatham \cite{Hood} kindly produced an $\Ext$-chart illustrating the first page of this spectral sequence, out to the the forty-five stem. We share a clip here in Figure~\ref{ExtChart} and the full chart in the Appendix. Along the zero-line in this width spectral sequence we see the Steenrod indecomposables of the Dickson algebras, which have been of interest, for example in work of Hung and Peterson \cite{HungPeterson}, and are far from understood. This zero-line receives the Hurewicz map for $Q_0S^0$, as studied by Lannes and Zarati \cite{LannesZarati}.
		
	Wellington made similar computations for $\Ext(H_* Q_0S^0)$ at the prime 2, including charts out to the 17 stem. Comparing Wellington's results to ours, they mainly agree, but our calculations reveal an error in the 11 and 12 stem in the $\Ext$ chart. The original chart had a class in bidegree (12,4) and a $d_2$ differential to the class in degree $(11,6)$. Instead, our hand calculations show that there is a class in degree $(12,3)$, which appears in the computer calculations in degree $(11,3)$ after the desuspension. Since we know that the CWSS must converge to stable homotopy, we know that there must be a $d_3$ differential instead of the $d_2$ differential. 
	\pagebreak
	
\begin{center}
	\begin{figure} [h]
	\includegraphics[scale = 0.5]{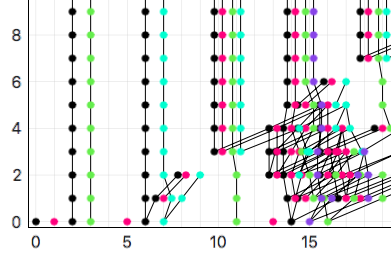}
	\caption{The $E_1$ page of the width spectral sequence, with width filtration encoded by color: 
black corresponds to $D_1$, red to $D_2$, green to $D_3$, teal to $D_4$, and purple to $D_5$.}
	\label{ExtChart}
	\end{figure}
	\end{center}	
	 
\DeclareSseqGroup\tower {} {
    \class(0,0)
    \foreach \y in {1,...,13} {
        \class(0,\y)
	\structline
	}
}

\begin{sseqdata}[name = Wellington results, Adams grading, xscale = 0.6, yscale = 0.4, y range = {0}{8}]
\class(1,0) 
\class(2,0) 
\tower(3,0) 
\tower(4,0) 

\foreach \y in {0,...,8} {
	\d[blue]3(4,\y)
}

\class(6,0) 
\tower(7,0)
\tower(8,0)
\foreach \y in {0,...,8} {
	\d[red]4(8,\y)
}
\class(8,1) 
\class(8,1) 
\class(9,1) 
\class(9,2) 
\class(9,2) 
\class(10,2) 
\tower(11,3) 

\tower(11,3) 

\tower(12,0) 
\foreach \y in {0,...,8} {
	\d[blue]3(12,\y)
}
\tower(12,3) 
\foreach \y in {3,...,8} {
	\d[blue]3(12,\y, 2, 2)
}
\class(14,0)
\class(14,2)
\class(14,2)
\class(14,3)
\class(14,3)
\class(14,4)
\class(14,4)

\tower(15,0)
\d[green]2(15,0,1,1)
\d[green]2(15,1,1,2)
\d[green]2(15,2,1,2)

\tower(15,1)
\d[green]2(15,1,2,1)
\d[green]2(15,2,2,1)
\class(15,2)
\class(15,3)
\class(15,3)  
\class(15,3) 
\class(15,4) 

\tower(16,0) 
\d[green]2(16,0,1,3)
\d[green]2(16,1,1,5)
\d[green]2(16,2,1,3)
\foreach \y in {3,...,8} {
	\d[purple]5(16,\y)
}

\tower(16,1) 
\foreach \y in {1,...,8} {
	\d[green]2(16,\y,2,2)
}
\class(16,1) 
\class(16,1) 
\d[green]2(16,1,3,3)

\class(16,2)
\class(16,3) 
\class(16,4)
\class(16,4)
\class(16,5)
\class(16,5)

\class(17,0)
\d[green]2(17,0,1,3)
\class(17,1)
\d[blue]3(17,1,1,4)
\class(17,1)
\d[green]2(17,1,2,3)
\class(17,2)
\class(17,2)
\d[green]2(17,2,2,3)
\class(17,2)
\d[blue]3(17,2,3,3)
\class(17,3)
\class(17,3)
\class(17,3)
\class(17,4)
\class(17,4)
\class(17,4)
\class(17,5)
\class(17,5)
\class(17,5)
\class(17,6)
\class(17,6)

\class(18,1)
\d[green]2(18,1,1,1)
\class(18,1)
\d[green]2(18,1,2,3)
\class(18,2)
\d[green]2(18,2,1,1)
\class(18,2)
\d[green]2(18,2,2,2)
\class(18,2)
\d[green]2(18,2,3,3)
\class(18,3)
\d[green]2(18,3,1,1)
\class(18,3)
\d[green]2(18,3,2,2)
\class(18,3)
\d[blue]3(18,3,3,1)
	
\end{sseqdata}
\printpage[name =  Wellington results, title style = { align = center, text width = 5.5in }, title = {Part of Table 13 by Wellington \cite{Wellington} (with correction in the 12 stem), depicting the $E_2$ page and differentials of the CWSS.} ] \\

\printpage[name =  Wellington results,  title style = { align = center, text width = 5.5in }, title = {$E_{\infty}$ page of Curtis-Wellington spectral sequence up through the 17 stem.}, page = 6]

	The differentials of the spectral sequence of Theorem \ref{WidthSS}, namely $$d_r: \Ext^{s,t}(D_n^o) \to \Ext^{s+1,t}(D_{n+r}^o),$$  fix topological degree ($t$), 
increase co-bar length by 1 ($s$), and increase filtration by $r$. In the charts, they will be moving one unit left, one unit up, and in our representation of the third (width) grading by color move between different colors so that if the source is in $D_{n}^o$, the target is in $D_{n+r}^o$. Then we can see by hand through at least the 16-stem that there are no possible differentials in the width filtration spectral sequence.

	\section{$h_0$ towers} \label{TowersSec}
	
	One of the immediate differences between the Adams spectral sequence and Curtis-Wellington spectral 
	sequence is the presence of $h_0$ towers.  We classify these in the width spectral sequence.
	
	We begin by defining a partial action of $\Ext_{\A}(\F_2, \F_2)$.
	In the Yoneda approach to $\Ext$, an element of $\Ext^{s,t}_{\A}(\F_2, \F_2)$ is an extension of length $s$ from $\F_2$ to $\Sigma^t \F_2$, namely 
\[ 0 \to \Sigma^t \F_2 \to E_1 \to \cdots \to E_s \to \F_2 \to 0, \]
where the $E_i$ are  $\A$ modules.  An element of $\Ext^{p, q}_{\cu}(\Sigma^{-1} D_n^o, \F_2)$ is an extension of length $p$ from $\Sigma^{-1} D_n^o$ to $\Sigma^q \F_2$ 
\[ 0 \to \Sigma^q \F_2 \to F_1 \to \cdots \to F_{p} \to \Sigma^{-1} D_n^o \to 0 \]
where the $F_i$ are unstable $\A$ modules and the maps $F_i \to F_{i+1}$ have 
excess less than or equal to the degree of $F_{i+1}$.

\begin{definition}
Define a partial action of $\Ext^{s,t}_{\A}(\F_2, \F_2)$ on $\Ext^{p, q}_{\cu}(\Sigma^{-1} D_n^o, \F_2)$, defined when   $t-s+1 \le q$,
 by suspending the stable extension $q$ times and concatenating it on the left with the unstable extension to give an extension which
 defines an element of $\Ext^{s+p, t+q}_{\cu}(\Sigma^{-1} D_n^o, \F_2)$.
\end{definition} 

\[ 0 \to \Sigma^{t+q} \F_2 \to \Sigma^{q}E_1 \to \cdots \to \Sigma^{q} E_s \to \Sigma^{q} \F_2 \to F_1 \to \cdots \to F_{p} \to \Sigma^{-1} D_n^o \to 0.\]

	We next recall the $\Lambda$-algebra \cite{6Author, BousfieldCurtis}, which gives an explicit though computationally involved way to compute some $\Ext$ groups over the Steenrod algebra. The $\Lambda$ algebra is the graded associative differential algebra with unit over $\F_2$ with 
	\begin{enumerate}
	\item a generator $\lambda_i$ of degree $i$ for each $i \ge 0$
	\item for each $i$, $k \ge 0$ a relation 
	\[ \lambda_i\lambda_{2i+1+k} = \Sigma_{j\ge 0} \binom{k-1-j}{j} \lambda_{i+k-j}\lambda_{2i+1+j} \]
	\item a differential $\partial$ given by
	\[ \partial(\lambda_i) = \Sigma_{j\ge 1} \Sigma_{j\ge 1} \binom{i-j}{j} \lambda_{i-j}\lambda_{j-1}. \]
	\end{enumerate}
	Note that $\Lambda = \bigoplus_{s\ge 0} \Lambda^s$ where $\Lambda^s$ is genreated by monomials $\lambda_I$ of length $s$. 
	
	We to define complexes using the $\Lambda$ algebra, we need our action to be on the right.
	Let $\mathcal{U_R}$ denote the category of unstable right $\A$ modules and continue to denote $\cu$ the category of unstable left $\A$ modules. For $M$ of finite type, $ \Ext^{s,t}_{\mathcal{U_R}}(\F_2, M) \cong \Ext^{s,t}_{\cu}(M^*, \F_2) .$

	The $\Lambda$ algebra gives one method to approach calculation of $\Ext$ groups, in particular, from \cite{BousfieldCurtis}, 
	\[ \Ext^{s,t}_{\mathcal{U_R}}(\F_2, M) \cong H^s(V(M))_{t-s} \]
	where $V(M)$ is the chain complex 
	\[ M \to M \hat\otimes \Lambda^1 \to M \hat\otimes \Lambda^2 \to \cdots  \]

	In \cite{Bousfield}, Bousfield defines the following tower complex as a quotient of the chain complex $V(M)$, as follows.
	\begin{definition} \label{def:towers} Let
	\[ T^s(M) = \begin{cases} 
		M \otimes (\lambda_0)^{s} & s = 0,1 \\
		M  \otimes (\lambda_0)^s \oplus \Sigma_{k>0} M_{2k} \otimes \lambda_{2k-1} (\lambda_0)^{s-1} & s > 1,\\ 
	\end{cases} \]
	with 
	\[ \delta(x \otimes \lambda_{2k-1} (\lambda_0)^{s-1}) = 0, \]
	and 
	\[ \delta(x \otimes (\lambda_0)^s) = \begin{cases}
		x \cdot \Sq^1 \otimes (\lambda_0)^{s+1} + x\cdot \Sq^{2k} \otimes \lambda_{2k-1}(\lambda_0)^s & s>0, x \in M_{4k} \\
		x \cdot \Sq^1 \otimes (\lambda_0)^{s+1} & \text{ otherwise}. \\ 
	\end{cases} \]
	\end{definition}

	Remark 2.4 of \cite{Bousfield} notes that the towers in $H^*(T(M))$ correspond with those in $H^*(V(M))$ and thus also with the towers in $\Ext^{s,t}_{U_R}(\F_2, M)$. Applying these tower detectors to $\Ext^{s,t}_{\cu}(\Sigma^{-1}D_n^o, \F_2)$, we conclude the following two theorems. 
	\begin{theorem} \label{D1 towers}
		There are infinite towers in $\Ext^{s,t}(\Sigma^{-1}D_1^o, \F_2)$ in degrees $4a - 2$ for $a$ a positive integer. 
	\end{theorem}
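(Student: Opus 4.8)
The plan is to read the towers directly off Bousfield's tower complex $T(M)$ of Definition~\ref{def:towers}, applied to the right $\A$-module $M = (\Sigma^{-1}D_1^o)^*$. Via the duality $\Ext^{s,t}_{\mathcal{U_R}}(\F_2,M) \cong \Ext^{s,t}_{\cu}(M^*,\F_2)$ (with $M^* = \Sigma^{-1}D_1^o$ since everything is of finite type) together with Remark 2.4 of \cite{Bousfield}, the infinite $\lambda_0$-towers of $H^*(T(M))$ are exactly the infinite towers we want in $\Ext_{\cu}^{s,t}(\Sigma^{-1}D_1^o,\F_2)$, so it suffices to compute them.

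First I would make the module explicit. Since $Gl_1(\F_2)$ is trivial, $D_1 = \F_2[x_1]$ with $|x_1| = 1$, so $D_1^o$, the quotient by perfect squares, has $\F_2$-basis the odd powers $x_1^{2k+1}$, $k \ge 0$. Hence $\Sigma^{-1}D_1^o$ is one-dimensional in each even degree $0,2,4,\ldots$ and zero elsewhere, and dually so is $M$; write $v_d$ for the dual basis element in degree $d$. Only two pieces of the $\A$-action enter $T(M)$. Because $M$ is concentrated in even degrees and $\Sq^1$ shifts parity, $v_d\cdot\Sq^1 = 0$ for all $d$. And a one-line Cartan computation $\Sq^{2k}x_1^{2k+1} = \binom{2k+1}{2k}x_1^{4k+1} = x_1^{4k+1}$ dualizes to $v_{4k}\cdot\Sq^{2k} = v_{2k} \ne 0$.

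With these in hand the differential of $T(M)$ is transparent. Since $\Sq^1$ acts as zero, every pure tower class $v_d\otimes(\lambda_0)^s$ is a cycle, and none is a boundary (a boundary landing on a $(\lambda_0)^s$-term would require a nonzero $\Sq^1$). The only nonzero differentials emanate from the degree-$4k$ part: for $s>0$ we get $\delta(v_{4k}\otimes(\lambda_0)^s) = v_{4k}\cdot\Sq^{2k}\otimes\lambda_{2k-1}(\lambda_0)^s = v_{2k}\otimes\lambda_{2k-1}(\lambda_0)^s \ne 0$, which truncates the tower based at $v_{4k}$ (and makes the corresponding $\lambda_{2k-1}$-classes boundaries). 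Thus an infinite $\lambda_0$-tower survives in $H^*(T(M))$ precisely from those $v_d$ with $d$ even and $d\not\equiv 0\pmod 4$, i.e. $d\equiv 2\pmod 4$. The tower based at $v_d$ sits in stem $t-s = d$, so these are exactly the stems $4a-2$, $a\ge 1$; the remaining surviving class $v_0$ accounts for the familiar bottom $h_0$-tower in stem $0$, which is why the indexing starts at $a=1$.

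I expect the main difficulty to be bookkeeping rather than any deep input: correctly transporting through the left-to-right dualization and the $\Sigma^{-1}$ shift, and in particular checking $\binom{2k+1}{2k}\equiv 1\pmod 2$ so that the $\Sq^{2k}$-differential is genuinely nonzero for every $k>0$ — this is precisely what kills the $4k$-towers and selects $d\equiv 2\pmod 4$. One should also invoke Remark 2.4 of \cite{Bousfield} to be sure that passing to the quotient complex $T(M)$ loses no towers, so that the count in $H^*(T(M))$ is exactly the count of towers in $\Ext_{\cu}^{s,t}(\Sigma^{-1}D_1^o,\F_2)$.
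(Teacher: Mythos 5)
Your proposal is correct and follows essentially the same route as the paper: dualize to $M=(\Sigma^{-1}D_1^o)^*$, note that $\Sq^1$ acts trivially while $\Sq^{2k}$ sends the degree-$4k$ class to the degree-$2k$ class, and read off from Bousfield's complex $T(M)$ that the surviving infinite $\lambda_0$-towers are based exactly in degrees $\equiv 2 \pmod 4$ (the paper's $x_{4k-1}\otimes(\lambda_0)^s$ in degree $4k-2$). The one sentence to tighten is the claim that \emph{every} $v_d\otimes(\lambda_0)^s$ is a cycle --- as you yourself observe in the very next clause, $v_{4k}\otimes(\lambda_0)^s$ is not a cycle because of the $\Sq^{2k}$ term, so that sentence should only assert that the $\Sq^1$-component of $\delta$ vanishes.
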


	\begin{theorem}\label{Dn towers}
		Let $n$ be an integer greater than or equal to 2. There are towers in \\* $\Ext^{s,t}(\Sigma^{-1}D_n^o, \F_2)$ corresponding to all integer solutions of
		\[ (2^{n-2}-2^{n-3})a_1 + \cdots + (2^{n-2}-1)a_{n-2} + (2^{n-1}-1)b_{n-1} = k \]
		where at least one of $a_1, \ldots, a_{n-2}$ are odd and $n \ge 3$. And also towers corresponding to all integer solutions of
		\[ (2^{n-1}-2^{n-2})b_1 + \cdots + (2^{n-1}-1)b_{n-1} + (2^{n}-1)c_{n} + (2^n-1) = k. \]
	\end{theorem}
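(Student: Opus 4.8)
The plan is to compute the homology of Bousfield's tower complex $T(M)$ of Definition~\ref{def:towers}, whose towers detect exactly the infinite $h_0$-towers in $\Ext$. Because that complex is built from a \emph{right} $\A$-module, I would first pass to the graded dual $M=(\Sigma^{-1}D_n^o)^*$, using the isomorphism $\Ext^{s,t}_{\mathcal{U_R}}(\F_2,M)\cong\Ext^{s,t}_{\cu}(\Sigma^{-1}D_n^o,\F_2)$ recalled above, so that the right action $x\cdot\Sq^i$ is dual to the Steenrod action on $\Sigma^{-1}D_n^o$. The splitting of $T^s(M)$ into the summand $M\otimes(\lambda_0)^s$ and the summand $\bigoplus_{k>0}M_{2k}\otimes\lambda_{2k-1}(\lambda_0)^{s-1}$ is exactly what produces the two families of towers in the statement, one supported on each summand; the $n=1$ prototype (Theorem~\ref{D1 towers}), where only the $(\lambda_0)$-summand contributes, is the model for the argument.

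Next I would read the survival conditions straight off the differential $\delta$. The single map $\Sq^{2k}\colon M_{4k}\to M_{2k}$ governs both summands: its cokernel detects the $\lambda_{2k-1}$-towers (living at stem $4k-1$), while its kernel, intersected with the $\Sq^1$-cycles, feeds the $(\lambda_0)$-towers at stem $4k$. Away from degrees divisible by $4$ the only constraint on a $(\lambda_0)$-tower is that its class be a nonzero element of the $\Sq^1$-Margolis homology (which is self-dual, so I may compute it directly on $\Sigma^{-1}D_n^o$). Thus the whole computation reduces to two pieces of the Steenrod structure on $D_n^o$: the action of $\Sq^1$, and the second-from-top square, which after desuspension is $\Sq^{|m|-1}$ applied to an odd-degree monomial $m$.

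The simplification that makes the answer clean is that top squares are squaring operations and hence vanish identically in $D_n^o$; I would exploit this, together with the explicit Dickson formulas (Theorem~\ref{SteenrodAction} and the presentation of \cite{Hung}), to evaluate $\Sq^1$ and the second-from-top squares on the monomial basis of $D_n^o$. I expect the surviving monomials to organize recursively: the $\Sq^1$-Margolis homology should collapse the top Dickson variable and leave a summand built from the generators of $D_{n-2}$ (degrees $2^{n-2}-2^\ell$) together with the bottom class of $D_{n-1}$ (degree $2^{n-1}-1$), while the second-from-top-square analysis should leave a summand built from the generators of $D_{n-1}$ (degrees $2^{n-1}-2^\ell$) together with a forced factor of the bottom class of $D_n$ (degree $2^n-1$, the source of the additive constant $+(2^n-1)$). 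Reading off these generator degrees produces precisely the two linear forms in the statement, and the oddness conditions — explicit in the first family, automatic in the second — encode that a monomial represent a nonzero class in $D_n^o$ rather than in its subalgebra of squares, explaining also why the first family is empty until $n\ge 3$.

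The hard part will be the second-from-top-square bookkeeping and the attendant matching of coefficients: establishing that the kernel and cokernel of $\Sq^{2k}$ on the monomial basis are controlled by the generator degrees of $D_{n-1}$ and $D_{n-2}$, rather than by some less tractable set, is exactly the place where the Adem and Cartan relations in the Dickson algebra must be managed. I would organize this as an induction on $n$, identifying the relevant Margolis homology and middle-square cokernel with (suitably shifted and squared) lower Dickson algebras, checking $n=2$ by hand (where only the second family survives) and $n=3$ against the chart of Figure~\ref{ExtChart}, and finally confirming that the resulting tower locations coincide with those Wellington found in the $E_2$-term of the CWSS.
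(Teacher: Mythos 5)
Your overall framework is the same as the paper's: dualize to $M=(\Sigma^{-1}D_n^o)^*$, run Bousfield's tower complex $T(M)$, observe that only $H^2$ is needed, and reduce everything to the right actions of $\Sq^1$ and of $\Sq^{2k}$ on degree-$4k$ classes. But your structural prediction of where the two families live is wrong, and it would derail the computation. In the paper's proof \emph{both} families are carried by the $M\otimes(\lambda_0)^s$ summand: the second linear form comes from classes $x_{a_1,\dots,a_n}\otimes(\lambda_0)^2$ with $a_{n-1},a_n$ odd and all other $a_i$ even (the forced odd $a_n$ is the source of the constant $2^n-1$), and the first from classes with $a_n=0$, $a_{n-1}$ even, in monomial degree divisible by $4$. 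The summand $\bigoplus_k M_{2k}\otimes\lambda_{2k-1}(\lambda_0)^{s-1}$ contributes \emph{nothing}: every $x_{a_1,\dots,a_n}$ of monomial degree $2k+1$ equals $\Sq^{2k}$ applied to the $\Sq^1$-annihilated class $x_{2a_1,\dots,2a_{n-2},\,2a_{n-1}+1,\,2a_n-1}$, so the cokernel you propose to read the $\lambda_{2k-1}$-towers from is zero --- exactly as in the $n=1$ prototype you cite, where that summand also dies. Consequently your plan extracts only one family where the theorem has two, and attributes the surviving degrees to the wrong residues mod $4$ (the survivors sit in $M$-degrees $4k-2$ and $4k-1$, not $4k-1$ and $4k$).

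Two further points. First, ``nonzero in $\Sq^1$-Margolis homology'' is not the correct survival criterion for the $(\lambda_0)$-towers even away from degrees divisible by $4$: a class in $M$-degree $4k-1$ can only be bounded by a chain $y\otimes\lambda_0$ with $y$ in degree $4k$, and $\delta(y\otimes\lambda_0)$ then also has a $\lambda_{2k-1}\lambda_0$ component which must be cancelled by an auxiliary correction term; the paper's treatment of its type~(a) cycles in degrees $\equiv 0 \pmod 4$ consists precisely of exhibiting such corrections, and without them some $\Sq^1$-boundaries would be misclassified. Second, the heart of the argument --- which you defer to a hoped-for induction identifying Margolis homology and the middle-square cokernel with lower Dickson algebras --- is done in the paper with no induction at all: one writes closed formulas for $x_{a_1,\dots,a_n}\cdot\Sq^1$ and for $x_{a_1,\dots,a_n}\cdot\Sq^{2k}$ on the dual monomial basis (three nonzero cases for the latter, indexed by which $a_j$ are odd) and then runs a four-way classification of cycles followed by an explicit boundary check in each case. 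That direct computation is what actually produces the oddness conditions and the two linear forms; as written, your proposal does not yet contain it.
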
 
	
	 Wellington, in  \cite{Wellington} also used these tower detecting complexes, but of course with his homology approach. Recall that the homology of $Q_0S^0$ is free under the product induced by loop sum on classes $Q^I$ where $I$ is admissible. Wellington proves there are towers in dimensions $4k-1$ and $4k$ generated by $Q^I$ either in degree $4k$ with excess 0 and some odd index, or $Q^I$ in degree $4k-1$ with final index odd and all others even. Our calculations agree with Wellington's in that the towers in $\Ext^{s,t}_{\cu}(\Sigma^{-1}D_n^o, \F_2)$ for each $n$ correspond with his generated by $Q^I$ with $\ell(I) = n$.  Indeed, for $n = 4k-1$, we have towers corresponding to each integer solution to  
	\[ (2^{n-1}-2^{n-2})b_1 + \cdots + (2^{n-1}-1)b_{n-1} + (2^{n}-1)c_{n} + (2^n-1) = k. \]
	Each of these corresponds to the tower generated by $Q^I$ with $I = (s_1, s_2, \ldots, s_n)$ where $s_n = 2k + 1 + 2c_n$ and the other $s_i$ can be computed inductively from right to left with the formula
	\[ s_i = \frac{1}{2^{(n-1)-i}}\left(k+1+a_n+ \sum_{j = i}^{n-1} 2^{(n-1)-j}b_j \right) \text{ for } 2 \le i \le n-1, \]
	and finally
	\[ s_1 = 4k-1 -\sum_{j=2}^n s_j . \]
	For $n = 4k$, we found towers corresponding to each integer solution of 
	\[ (2^{n-2}-2^{n-3})a_1 + \cdots + (2^{n-2}-1)a_{n-2} + (2^{n-1}-1)b_{n-1} = k. \]
	Each of these corresponds to the tower generated by $Q^I$ with $I = (s_1, s_2, \ldots, s_n)$ where we once again find each term in the index working from right to left. First, $s_n = 2k$ and $s_{n-1} = k+ b_{n-1}$, then inductively we compute
	\[ s_i = \frac{1}{2^{(n-1)-i}}\left(k + b_{n-1} + \sum_{j=i}^{n-2} 2^{(n-2)-j}a_j \right) \text{ for } 2\le i \le n-1, \]
	and finally
	\[s_1 = 4k-\sum_{j=2}^n s_j. \]
	
	This agreement between our towers in the width spectral sequence and Wellington's imply there are no differentials  in the width filtration spectral sequence
	with $h_0$ inverted. Between the fact that these results indicate there are no differentials between towers and that some differentials can be eliminated by hand calculations in low degrees, we wonder whether there are any differentials in the width spectral sequence at all, a purely algebraic question.

	\begin{proof} [Proof of Theorem \ref{D1 towers}]
		Let $d_1$ be the generator of $D_1$. Then elements of $D_1^o$ are $d_1^{2i+1}$ in degree $2i$ after desuspension. Take $\{x_1, x_3, \cdots, x_{2i+1}, \cdots\}$ as a basis for the linear dual, $(\Sigma^{-1} D_1^o)^*$ where $x_{2i+1}$ is dual to $d_1^{2i+1}$, so each $x_{2i+1}$ is in degree $2i$ in the desuspension.   Consider $M = (\Sigma^{-1} D_1^o)^*$ as an unstable right $\A$ module by defining the linear map $x_i \cdot \Sq^k$ as 
		\[ (x_i \cdot \Sq^k)(y) = x_i(\Sq^ky), \text{ for } y \in D_1^o. \]
		
		We can use Bousfield's tower detector to determine where there are towers in $\Ext^{s,t}_{\mathcal{U_R}}(\F_2, M)$. As defined in as defined in Definition \ref{def:towers}, $T^s(M)$ is constructed so that in degree 2 and above the next degree is constructed from the previous by multiplying by $\lambda_0$ on the right and the differential from degree one onward is the same as the previous degree but with an extra factor of $\lambda_0$ on the right. Thus, it is sufficient to compute $H^2(T(M))$ to determine where the towers are. 
		
		We can see in Definition \ref{def:towers} that the differential $\delta$ only involves $\Sq^1$ and $\Sq^{2k}$ and thus in this case is determined by the fact that 
		\[ x_i \cdot \Sq^1 = 0 \text{ and } x_{4k+1} \cdot \Sq^{2k} = x_{2k+1}. \]

		Elements of $H^2(T(M))$ come in three forms. First, all $x_{4k+1} \otimes (\lm_0)^2$ are not cycles since $\delta(x_{4k+1} \otimes (\lm_0)^2) = x_{2k+1} \otimes \lm_{2k-1}(\lm_0)^2$. Second, all $x_{2k+1} \otimes \lm_{2k-1}\lm_0$ are cycles, but also boundaries hit by $x_{4k+1} \otimes \lm_0$. Finally, all $x_{4k-1}\otimes (\lm_0)^2$ are cycles and can not be boundaries since the image of $T^1(M)$ is only elements of the form $x_{2k+1} \otimes \lm_{2k-1} \lm_0$. Thus, we get a tower for $x_{4k-1} \otimes (\lm_0)^s$ in degree $4k-2$ for each positive integer $k$. 
	\end{proof}
	
	\begin{proof} [Proof of Theorem \ref{Dn towers}]
		We know that $D_n$ is generated by $n$ elements in degrees $(2^n-2^i)$ for $0 \le i \le n-1$. Let these generators be represented by $d_{2^n-2^i}$. Then an arbitrary basis element of $D_n^o$ is of the form $d_{2^n-2^{n-1}}^{a_1} \cdots d_{2^n-2^{0}}^{a_n}$ with at least one $a_i$ odd. Let $x_{a_1, \ldots, a_n} \in (D_n^{o})^*$ denote the linear dual of $d_{2^n-2^{n-1}}^{a_1} \cdots d_{2^n-2^{0}}^{a_n}$ in degree $\Sigma_{i = 1}^n a_i(2^n-2^{n-i})$. 

		Working now in $(\Sigma^{-1}D_n^o)^*$, $x_{a_1, \ldots, a_n} \in (D_n^{o})^*$ will now be in degree  $\Sigma_{i = 1}^n a_i(2^n-2^{n-i}) - 1$. Let $M = (\Sigma^{-1}D_n^o)^*$ and consider the tower detecting complex $T(M)$. As described n the proof of Theorem \ref{D1 towers} it is sufficient to calculate $H^2(T(M))$ to determine the location of the towers. 

		Since the differential $\delta$ as defined in Definition \ref{def:towers} only uses $\Sq^1$ and $\Sq^{2k}$,we need only understand the right action of $\Sq^1$ on an arbitrary $x_{a_1, \ldots, a_n}$ and the right action of $\Sq^{2k}$ on $x_{a_1, \ldots, a_n}$ with $\Sigma_{i = 1}^n a_i(2^n-2^{n-i}) = 4k+1$. These are given by the formulas
		\[ x_{a_1, \ldots, a_n} \cdot \Sq^1 = \begin{cases}
			x_{a_1, \ldots, a_{n-2}, a_{n-1}+1, a_{n} -1} & \text{ if $a_{n-1}$ even, $a_n \ge 1$, at least one $a_i$ odd} \\ 
			0& \text{ else}
		\end{cases} \]
		and if $\Sigma_{i = 1}^n a_i(2^n-2^{n-i}) = 4k+1$
		\[ x_{a_1, \ldots, a_n} \cdot \Sq^{2k} = \begin{cases}
			x_{\frac{a_1+2}{2}, \frac{a_2}{2}, \ldots, \frac{a_{n-1}}{2}, \frac{a_{n}-1}{2}} & \text{ if $a_{n}$ odd, $a_i$ even}\\ 
			x_{\frac{a_1}{2},\ldots, \frac{a_{j-1}}{2}, \frac{a_j-1}{2}, \frac{a_{j+1}+2}{2}, \frac{a_{j+2}}{2}  \ldots, \frac{a_{n-1}}{2}, \frac{a_{n}-1}{2}} & \text{ if $a_{j}, a_n$ odd, $a_i$ even for $i \ne j, n$}\\ 
			x_{\frac{a_1}{2},\ldots, \frac{a_{n-2}}{2},  \frac{a_{n-1}-1}{2}, \frac{a_{n}+1}{2}} & \text{ if $a_{n-1}, a_n$ odd, $a_i$ even for $1\le i < n-1$}\\ 
			0& \text{ else},
		\end{cases} \]
		where throughout $i$ and $j$ are between $1$ and $n-1$ inclusively.

		With these formulas in hand, we analyze $H^2(T(M))$. Our strategy will be to first characterize all cycles in $H^2(T(M))$ and then go through each type of cycle to determine which are boundaries. All of those that are not boundaries will correspond to our tower generators. 
		There are four types of cycles:
		\begin{enumerate}[label = (\alph*)]
		\item $x_{a_1, \ldots, a_n} \otimes (\lm_0)^2$ with $\Sigma_{i = 1}^n a_i(2^n-2^{n-i}) \ne 4k+1$ and $a_{n-1}$ odd 
		\item $x_{a_1, \ldots, a_n} \otimes (\lm_0)^2$ with $a_{n-1}$ even and $a_{n} = 0$ 
		\item $x_{a_1, \ldots, a_n} \otimes (\lm_0)^2$  with $\Sigma_{i = 1}^n a_i(2^n-2^{n-i}) = 4k+1$ and $a_n, a_{n-1}$, and some other $a_i$ for $1\le i < n-1$ odd
		\item $x_{a_1, \ldots, a_n} \otimes \lm_{2k-1}\lm_0$ with $\Sigma_{i = 1}^n a_i(2^n-2^{n-i}) = 2k+1$
		\end{enumerate}
		
		To see that these are all of the cycles, we look at our formulas for the action of $\Sq^1$ and $\Sq^{2k}$. As long as the degree of $x_{a_i, \ldots, a_n}$ is not a multiple of four, the differential on $x_{a_i, \ldots, a_n} \otimes (\lm_0)^2$ only involves $\Sq^1$. So, if $\Sigma_{i = 1}^n a_i(2^n-2^{n-i}) - 1 \ne 4k$, $x_{a_i, \ldots, a_n} \otimes (\lm_0)^2$ is a cycle when $x_{a_i, \ldots, a_n} \cdot \Sq^1 = 0$. This means either $a_{n-1}$ is odd and we get cycles of type (a), $a_n = 0$ and $a_{n-1}$ is even and we get cycles of type (b), or all $a_i$ are even, but then $x_{ai, \ldots, a_n}$ is not an element of $(D_1^o)^*$.
		
		Now if $x_{a_i, \ldots, a_n}$ is in degree $4k$, the differential on $x_{a_i, \ldots, a_n} \otimes (\lm_0)^2$ involves both $\Sq^1$ and $\Sq^{2k}$. Then $\Sigma_{i = 1}^n a_i(2^n-2^{n-i}) -1 = 4k$ which means that $a_n > 0$ and $a_{n-1}$ is even. For both $\Sq^1$ and $\Sq^{2k}$ to act trivially, $a_{n-1}$ must be odd and some other $a_i$ for $1\le i < n-1$ is also odd to give the cycles of type (c). 
		
		Finally, we have the elements of the form $x_{a_i, \ldots, a_n} \otimes \lambda_{2k-1}(\lambda_0)^{s-1}$ which are all cycles by definition and give us the cycles in class (d).

		Now, beginning with cycles of class (a), we want to determine which are also boundaries. For cycles in (a), $x_{a_1, \ldots, a_n} \otimes (\lm_0)^2$ with $\Sigma_{i = 1}^n a_i(2^n-2^{n-i}) \ne 4k+1$ and $a_{n-1}$ odd, we will split into three cases based on the value of $\Sigma_{i = 1}^n a_i(2^n-2^{n-i})$ mod 4. 
		
		 If $\Sigma_{i = 1}^n a_i(2^n-2^{n-i}) = 4k -1$ and $a_{n-1}$ is odd, then $a_n$ is odd and
\[ \delta(x_{a_1, \ldots, a_{n-2}, a_{n-1}-1, a_{n} + 1} \otimes \lm_0) = x_{a_1, \ldots, a_{n-2}, a_{n-1}, a_n} \otimes (\lm_0)^2 \]
as long as one of $a_1, \ldots, a_{n-2}$ are odd. Thus, we see that the only cycles not hit by boundaries are $x_{a_1, \ldots, a_{n-2}, a_{n-1}, a_n} \otimes (\lm_0)^2$ with $a_{n-1}, a_n$ odd and $a_i$ even for $1\le i \le n-2$.

		If $\Sigma_{i = 1}^n a_i(2^n-2^{n-i}) = 4k-2$ and $a_{n-1}$ is odd , then $a_n$ is even and 
		\[ \delta(x_{a_1, \ldots, a_{n-2}, a_{n-1}-1, a_{n} + 1} \otimes \lm_0) = x_{a_1, \ldots, a_{n-2}, a_{n-1}, a_n} \otimes (\lm_0)^2 \]
		and we see that all these cycles are also boundaries. 

		If $\Sigma_{i = 1}^n a_i(2^n-2^{n-i}) = 4k$ and $a_{n-1}$ is odd with $a_n \ge 2$, then 
$x_{a_1, \ldots, a_{n-2}, a_{n-1}, a_n} \otimes (\lm_0)^2  $ is the boundary of

		\[ \begin{cases} 
			\delta(x_{a_1, \ldots, a_{n-2}, a_{n-1}-1, a_{n} + 1} \otimes \lm_0 \\
\quad + x_{a_1+2, a_2, \ldots, a_{n-2}, a_{n-1}, a_{n}-1} \otimes \lm_0 ) & \text{ $a_{n-1}$ odd, all other $a_i$ even} \\
			\delta(x_{a_1, \ldots, a_{n-2}, a_{n-1}-1, a_{n} + 1} \otimes \lm_0 \\
\quad +x_{a_1, \ldots, a_{j-1}, a_{j}-1, a_{j+1} + 2, a_{j+2}, \ldots, a_{n-2}, a_{n-1}, a_n+1} \otimes \lm_0)  & \text{ $a_j$, $a_{n-1}$ odd, all other $a_i$ even} \\
			\delta(x_{a_1, \ldots, a_{n-2}, a_{n-1}-1, a_{n} + 1} \otimes \lm_0 ) & \text{  $a_{n-1}$ odd, at least 2 other $a_i$ odd} \\
		\end{cases}\]
		and we see that all these cycles are also boundaries. This now covers all cases for our class (a) cycles. 

		Turning to our class (b) cycles, $x_{a_1, \ldots, a_n} \otimes (\lm_0)^2$ with $a_{n-1}$ even and $a_{n} = 0$ we see that none of these are boundaries. Indeed, they would need to be in the image of some $x_{b_1, \ldots, b_n} \otimes (\lm_0)$ with $x_{b_1, \ldots, b_n}\cdot \Sq^1 = x_{a_1, \ldots, a_n}$. However, $\Sq^1$ changes the parity of each of the last two indices and is only nonzero if $b_{n-1}$ is even, but then its image $a_{n-1}$ must be odd, a contradiction. 

		Next, all class (c) cycles $x_{a_1, \ldots, a_n} \otimes (\lm_0)^2$  with $\Sigma_{i = 1}^n a_i(2^n-2^{n-i}) = 4k+1$ and $a_n, a_{n-1}$, and some other $a_i$ for $1\le i < n-1$ odd are boundaries hit by 
		\[ \delta(x_{a_1, \ldots, a_{n-2}, a_{n-1}-1, a_{n} + 1} \otimes \lm_0) = x_{a_1, \ldots, a_{n-2}, a_{n-1}, a_n} \otimes (\lm_0)^2. \] 
		Thus, all class (c) cycles are boundaries as well.

		Finally, for class (d), if $\Sigma_{i = 1}^n a_i(2^n-2^{n-i}) = 2k+1$
		\[ \delta(x_{2a_1, \ldots, 2a_{n-2}, 2a_{n-1}+1, 2a_{n}-1} \otimes \lm_0) = x_{a_1, \ldots, a_n} \otimes \lm_{2k-1}\lm_0. \]
		So these too are all boundaries. 

		This leaves us with only those classes of type (a) corresponding to $x_{a_1, \ldots, a_{n-2}, a_{n-1}, a_n} \otimes (\lm_0)^2$ with $a_{n-1}, a_n$ odd, $a_i$ even for $1\le i \le n-2$ and all cycles from class (b) with $x_{a_1, \ldots, a_n} \otimes (\lm_0)^2$ with $a_{n-1}$ even and $a_{n} = 0$ such that $\Sigma_{i = 1}^n a_i(2^n-2^{n-i}) = 4k$ in $H^2(T(M))$. These correspond to all integer solutions respectively of the equations:

		\[ (2^{n-1}-2^{n-2})b_1 + \cdots + (2^{n-1}-2)b_{n-2} +(2^{n-1}-1)b_{n-1} + (2^{n}-1)c_{n} + (2^n-1) = k. \]
		and 
		\[ (2^{n-2}-2^{n-3})a_1 + \cdots + (2^{n-2}-1)a_{n-2} + (2^{n-1}-1)b_{n-1} = k \]
		with at least one $a_i$ for $1 \le i < n-2$ odd.
	\end{proof}
	
	While these  calculations immediately give towers in the width filtration spectral sequence, we do not perform the analysis to determine which
	survive to the $E_2$ of the CWSS, instead citing agreement with Wellington's towers in the $E_2$ of the CWSS.  His argument is substantially more involved
	than what was required above, so we would like to find a self-contained argument at some point.
	
	It would be interesting to investigate the elements of homotopy which correspond to these towers, which must of course all support or receive differentials.  
	Would the resulting finite towers at $E_\infty$ be exceptional in any way?
	
\section{First analysis of J map} \label{JSec}
We start with a fun observation.  The Bousfield result \cite{Bousfield} equating the $E_2$ of the unstable Adams spectral sequence with an $\Ext$ in unstable
modules can be applied to $\Ext_{\ua}^{s,t}(H^*(BO), \F_2)$, as of course  $H^*(BO)$ is polynomial on the Stiefel-Whitney classes.  We get 
\[ \Ext^{s.t}_{\ua}(H^*(BO), \F_2) \cong \Ext^{s, t-1}_{\cu}(\Sigma^{-1} \ind H^*(BO), \F_2). \]

Up to decomposables, $\Sq^i(w_j) = {j-1 \choose i} w_{j+i}$, where $w_j$ is the $j$th Stiefel Whitney class \cite{Kochman, Stong, Wu}. Preliminary calculations for $\Ext_{\cu}$ lead to the observation that its $\Ext$ chart looked like $\Ext_{\cu}(\Sigma^{-1} D_1^o)$, but shifted to the right. This then lead us to the following isomorphism between familiar modules, which we found surprising. Moreover, this hints that the lowest degree in the width filtration yields the image of J, an idea that we expand on more later in this section. 

\DeclareSseqGroup\tower {} {
    \class(0,0)
    \foreach \y in {1,...,8} {
        \class(0,\y)
	\structline
	}
}

\begin{sseqdata}[name = BO, Adams grading, xscale = 0.6, yscale = 0.5, x range = {1}{17},y range = {0}{4}]
\class(1,0) 
\tower(3,0) 

\tower(7,0)

\class(8,1) 
\structline(7,0)(8,1)
\class(9,2) 
\structline(8,1)(9,2)

\tower(11,3) 

\class(14,2)

\class(14,3)
\structline(14,2)(14,3)
\class(14,4)
\structline(14,3)(14,4)
\structline(11,3)(14,4)

\tower(15,0)

\class(15,3)
\structline(14,2)(15,3,2)

\class(16,1)
\structline(15,0)(16,1)

\class(16,4)
\structline(15,3,2)(16,4)


\class(17,2)
\structline(16,1)(17,2)
\class(17,3)

\structline(14,2)(17,3)

\class(17,4)
\structline(14,3)(17,4)
\structline(17,3)(17,4)

\class(17,5)
\structline(14,4)(17,5)
\structline(16,4)(17,5)
\structline(17,4)(17,5)

\class(18,1)
\structline(15,0)(18,1)
\class(18,2)
\structline(15,1)(18,2)
\class(18,3)
\structline(15,2)(18,3)
\structline(17,2)(18,3)


\end{sseqdata}
\begin{figure} [h]
\printpage[name =  BO, title style = { align = center, text width = 5.5in }, title = {Calculations of $\Ext_{\cu}(BO,\F_2)$} ] \\
\end{figure}

\begin{proposition}\label{shiftedD1}
$\Ext_{\cu}^{s,t}(\Sigma^{-1}  D_1^o) \cong \Ext_{\cu}^{s,{t+1}}(\Sigma^{-1} \ind H^*(BO))$
\end{proposition}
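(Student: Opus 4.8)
The plan is to make both sides completely explicit, recognize them as the cohomologies of two familiar spaces, and then relate the two through the Frobenius doubling functor together with a single fundamental exact sequence. The identification step is routine but is the foundation. Since $D_1 = \F_2[d_1]$ with $|d_1| = 1$, passing to $D_1^o$ keeps the odd powers $d_1^{2i+1}$, and $\Sq^{2j}d_1^{2i+1} = \binom{i}{j}d_1^{2(i+j)+1}$ by Lucas' theorem; after desuspending this identifies $\Sigma^{-1}D_1^o$ with $H^*(\mathbb{C}P^\infty;\F_2) = \F_2[z]$, $|z|=2$, $\Sq^{2j}z^i = \binom{i}{j}z^{i+j}$. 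Dually, the formula $\Sq^i w_j = \binom{j-1}{i}w_{i+j}$ on $\ind H^*(BO)$, after the reindexing $w_{m+1}\leftrightarrow x^m$ and a desuspension, identifies $\Sigma^{-1}\ind H^*(BO)$ with $H^*(\R P^\infty;\F_2) = \F_2[x]$, $|x|=1$. Writing $N = \F_2[x]$ and $\Phi$ for the doubling functor, the point is that $\Sigma^{-1}D_1^o \cong \Phi N$ while $\Sigma^{-1}\ind H^*(BO)\cong N$, so the proposition becomes the assertion $\Ext_{\cu}^{s,t}(\Phi N,\F_2)\cong \Ext_{\cu}^{s,t+1}(N,\F_2)$.

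Next I would write down the fundamental short exact sequence $0 \to \Phi N \xrightarrow{\lambda} N \xrightarrow{\pi} \Sigma\Phi N \to 0$, where $\lambda$ is the top square $z^i\mapsto x^{2i}$ (the inclusion of squares, i.e. the map induced by $\R P^\infty\to\mathbb{C}P^\infty$) and $\pi$ is the projection of $\F_2[x]$ onto its odd powers. One checks directly that both are unstable $\A$-module maps and that the cokernel of $\lambda$ is exactly $\Sigma\Phi N$ (equivalently $D_1^o$); this is the algebraic Gysin sequence of the circle bundle $\R P^\infty\to\mathbb{C}P^\infty$, and an instance of the general sequence $0\to\Phi M\to M\to\Sigma\Omega M\to 0$ in the special case $\Omega N\cong\Phi N$, $\Omega_1 N = 0$ that singles out $N=\F_2[x]$.

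Finally I would feed this sequence into the long exact sequence for unstable $\Ext_{\cu}(-,\F_2)$ and read off that the cokernel term $\Sigma\Phi N$ contributes precisely the one-unit shift in internal degree relating $\Ext(\Phi N)$ to $\Ext(N)$. The hard part will be controlling the connecting homomorphisms and the maps $\lambda^\ast,\pi^\ast$: by itself the long exact sequence only yields a recursion, so to extract the clean isomorphism I expect to have to identify the Frobenius (squaring) operation it encodes and show the sequence degenerates. With the paper's tools the most reliable route is to pass to the $\Lambda$-algebra models $V(H_\ast\mathbb{C}P^\infty)$ and $V(H_\ast\R P^\infty)$ and verify that the module doubling $N\rightsquigarrow\Phi N$ is matched by the $\Lambda$-algebra doubling $\lambda_i\mapsto\lambda_{2i+1}$, which raises internal degree by one and induces the asserted isomorphism on cohomology in positive degrees. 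I would close by flagging the one caveat this analysis exposes — the degree-zero unit class of $N$ has no partner on the $\Phi N$ side — which is exactly why the chart carries no stem-zero class and why the isomorphism is the one visible in positive stems. As a consistency check, the tower locations line up: Theorem~\ref{D1 towers} places the towers for $\Sigma^{-1}D_1^o$ at stems $4a-2$, one less than the towers at stems $4a-1$ appearing in the $BO$ chart.
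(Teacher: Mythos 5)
Your setup is sound and in fact coincides with the paper's own: identifying $\Sigma^{-1}\ind H^*(BO)$ with $N=\F_2[x]$ via $w_{m+1}\leftrightarrow x^m$ (using $\Sq^i w_j=\binom{j-1}{i}w_{i+j}$), identifying $\Sigma^{-1}D_1^o$ with $\Phi N$, and writing the short exact sequence $0\to\Phi N\xrightarrow{\lambda} N\to\Sigma\Phi N\to 0$ is exactly the content of the paper's verification that $\lambda_M$ is injective with cokernel $\Sigma\Sigma^{-1}D_1^o$ for $M=\Sigma^{-1}\ind H^*(BO)$. The gap is in the final step, which is where all the work lives. You correctly concede that the long exact sequence in $\Ext_{\cu}$ only yields a recursion, but your replacement --- passing to the $\Lambda$-algebra and invoking the doubling $\lambda_i\mapsto\lambda_{2i+1}$ --- is neither carried out nor correctly calibrated: that doubling sends a length-$s$ monomial of internal degree $t=s+\sum i_j$ to one of internal degree $s+\sum(2i_j+1)=2t$, so it does not ``raise internal degree by one,'' and no argument is given that it induces an isomorphism between the cohomologies of $V(H_*\R P^\infty)$ and $V(H_*\mathbb{C}P^\infty)$ in the stated bidegrees. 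Your closing caveat is also off: $\Phi N$ does have a degree-zero class (the desuspension of $d_1$), $\lambda$ matches the two bottom classes, and the cokernel simply begins in degree $1$, so no special pleading about stem zero is needed.

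The mechanism the paper uses to extract the shift is short and avoids entirely the connecting-map analysis you were worried about: since $\lambda_N$ is injective, $\Omega_1 N=0$, so applying the left adjoint $\Omega$ of $\Sigma$ to a free resolution $P_\bullet\to N$ yields a free resolution $\Omega P_\bullet\to\Omega N\cong\Phi N$; the adjunction then gives
\[ \Hom_{\cu}^t(\Omega P_\bullet,\F_2)\cong\Hom_{\cu}^t(P_\bullet,\Sigma\F_2)\cong\Hom_{\cu}^{t+1}(P_\bullet,\F_2), \]
and taking homology produces $\Ext_{\cu}^{s,t}(\Phi N,\F_2)\cong\Ext_{\cu}^{s,t+1}(N,\F_2)$ in one stroke (this is Proposition \ref{injectiveLambdaExtIso}). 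To complete your argument you should either reproduce this derived-adjunction step or actually prove the $\Lambda$-algebra comparison you gesture at; as written, the asserted isomorphism is not established.
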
 

Before proving this proposition, we recall notation from \cite{Schwartz}. Define $\Phi: \cu \to \cu$ on $M \in \cu$ at the prime 2 to by 
\[ (\Phi M)^n \cong M^{n/2} \quad \Sq^i \Phi x = \Phi \Sq^{i/2} x \]
where $M^{n/2}$ is trivial if $n/2$ is not an integer. Let $\lambda_M: \Phi M \to M$ by $\Phi x \mapsto \Sq_0 x$. Define the functor $\Omega$ and its first (and only nontrivial) left derived function $\Omega_1$ by 
\[ \ker \lambda_M = \Sigma \Omega_1 M \quad \text{ and } \quad\coker\lambda_M = \Sigma \Omega M . \]

\begin{proposition}\label{injectiveLambdaExtIso}
If $M \in \cu$ and $\lambda_{M}$ is injective, then $\Ext_{\cu}^{s,t}(\Omega M, \F_2) \cong \Ext^{s,t+1}(M, \F_2)$. 
\end{proposition}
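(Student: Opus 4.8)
The plan is to derive the defining adjunction of $\Omega$ to the level of $\Ext$ groups, arranging that the hypothesis on $\lambda_M$ is exactly what makes all derived-functor correction terms vanish. Recall from \cite{Schwartz} that $\Omega$ is the left adjoint of the suspension functor $\Sigma \colon \cu \to \cu$: for all $M, N \in \cu$ there is a natural isomorphism $\Hom_{\cu}(\Omega M, N) \cong \Hom_{\cu}(M, \Sigma N)$. Taking $N = \Sigma^{t}\F_2$ and recording internal degrees, this reads $\Hom_{\cu}(\Omega M, \Sigma^{t}\F_2) \cong \Hom_{\cu}(M, \Sigma^{t+1}\F_2)$, which is already the $s=0$ case of the proposition (and shows the shift is $+1$, since $\Sigma$ raises internal degree by one). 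The content is to promote this to all homological degrees $s$.

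First I would pick a projective resolution $P_\bullet \to M$ in $\cu$ and apply $\Omega$. Two observations make $\Omega P_\bullet \to \Omega M$ into a projective resolution. Since the right adjoint $\Sigma$ is exact, its left adjoint $\Omega$ sends projectives to projectives, so each $\Omega P_s$ is projective. And because $P_\bullet$ is a projective resolution, $H_i(\Omega P_\bullet) = L_i\Omega(M)$; as $\Omega_1$ is the only nonvanishing left derived functor of $\Omega$, the sole possible obstruction is $H_1(\Omega P_\bullet) = \Omega_1 M$. Here the hypothesis enters: $\lambda_M$ injective means $\Sigma\Omega_1 M = \ker \lambda_M = 0$, hence $\Omega_1 M = 0$, and $\Omega P_\bullet$ is exact in positive degrees.

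To conclude, I would compute $\Ext$ from these resolutions and invoke naturality. Applying $\Hom_{\cu}(-,\Sigma^{t}\F_2)$ to $\Omega P_\bullet$ and $\Hom_{\cu}(-,\Sigma^{t+1}\F_2)$ to $P_\bullet$ yields, term by term and by the degree-shifted adjunction above, isomorphic cochain complexes; naturality of the adjunction isomorphism guarantees these commute with the differentials, so the complexes are isomorphic. Passing to cohomology gives $\Ext^{s,t}_{\cu}(\Omega M, \F_2) \cong \Ext^{s,t+1}_{\cu}(M, \F_2)$, as desired.

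The hard part is the middle step, namely verifying that $\Omega P_\bullet$ really is a resolution, and this is precisely where the hypothesis is indispensable: without $\lambda_M$ injective one has $\Omega_1 M \neq 0$, applying $\Omega$ to $P_\bullet$ introduces homology in degree one, and one is left with a hyperderived (Grothendieck) spectral sequence mixing $\Ext(M)$ and a contribution from $\Omega_1 M$ rather than a clean isomorphism. A secondary point to check carefully is the internal-degree bookkeeping so that the shift comes out as exactly $+1$; this can be cross-checked against the short exact sequence $0 \to \Phi M \to M \to \Sigma\Omega M \to 0$ induced by the injective $\lambda_M$, which identifies $\Sigma\Omega M = \coker\lambda_M$ as the expected quotient of $M$ and so fixes the sign of the shift.
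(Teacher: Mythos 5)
Your argument is correct and is essentially the paper's own proof: both conclude from $\ker\lambda_M\cong\Sigma\Omega_1 M=0$ that $\Omega P_\bullet\to\Omega M$ remains a free resolution, and then shift degrees via the adjunction $\Hom_{\cu}(\Omega P_s,\F_2)\cong\Hom_{\cu}(P_s,\Sigma\F_2)$. Your version just makes explicit two points the paper leaves implicit, namely that $\Omega$ preserves projectives because $\Sigma$ is exact and that $H_i(\Omega P_\bullet)=L_i\Omega(M)$.
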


\begin{proof}
We know that 
\[ 0 \to \ker(\lambda_M) \to \Phi(M) \to M \to \coker(\lambda_M) \to 0 \] 
is an exact sequence where $\ker(\lambda_M) \cong \Sigma\Omega_1 M$ and $\coker(\lambda_M) \cong \Sigma\Omega M$. Since $\lambda_{M}$ is injective, $\ker(\lambda_M) = 0$, so $\Omega_i M = 0$ for all $i > 0$.  Let $P_{\bullet} \to M$ be a free resolution of $M$. Then $\Omega P_{\bullet} \to \Omega M$ is a free resolution of $\Omega M$. Thus, $\Ext_{\cu}(\Omega M , \F_2) \cong H_s(\Hom^t(\Omega P_{\bullet}, \F_2))$. Since $\Omega$ is left adjoint to $\Sigma$, we get
\[ H_s(\Hom^t(\Omega P_{\bullet}, \F_2)) \cong H_s(\Hom^t( P_{\bullet}, \Sigma \F_2)) \cong H_s(\Hom^{t+1}( P_{\bullet}, \F_2)) \cong \Ext^{s, t+1}_{\cu}(M, \F_2) \] 
\end{proof}

\begin{proof}(of Proposition \ref{shiftedD1}) 
The result will follow from Proposition \ref{injectiveLambdaExtIso} if we can show that $\lambda_M$ is injective for $M = \Sigma^{-1} \ind H^*(BO)$ and $\coker \lambda_M \cong \Sigma \Sigma^{-1} D_1^o$. 

For any $i > 0$, $w_i \in M$, $w_i$ has degree $i-1$ and 
\[ \lambda_M(w_i) = \Sq_0(w_i) = \Sq^{i-1}(w_i) = \binom{i-1}{i-1}w_{i+i-1} = w_{2i-1} \]
so $\lambda_M$ is clearly injective. We can also see that the image of $\lambda_M$ will be $w_{2i-1}$ for $i \ge 1$. Then $\coker \lambda_M$ will be $w_{2i} + \im(\lambda_M)$ in degrees $2i-1$. The map $w_{2i} + \im(\lambda_M) \mapsto d_1^{2i-1}$ then defines an isomorphism with $D_1^o \cong \Sigma\Sigma^{-1}D_1^o$, so we conclude that $\Sigma\Sigma^{-1}D_1^o \cong \coker \lambda_M$. Thus, applying Proposition \ref{injectiveLambdaExtIso}, $\Ext^{s,t}(\Sigma^{-1}  D_1^o) \cong \Ext^{s,{t+1}}(\Sigma^{-1} \ind H^*(BO))$.
\end{proof}

The suggestive calculation above for $BO$ could tell us about the delooping of the image of $J$, but our other calculations deal with $QS^0$ and not its delooping and are thus better suited to the image of $J$ map itself.  
For $QS^0$ itself, first calculations are consistent with the following.

\begin{conjecture}
The algebraic map on $\Ext$ induced by the image of $J$ on cohomology is the same as the map on $\Ext$ induced by reduction to $D_1^o$. 
\end{conjecture}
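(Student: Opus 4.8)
The plan is to reduce the conjecture to a single $\Ext$-triviality statement, after pinning down the two maps being compared at the level of unstable $\A$-modules. Write $J\colon SO\to Q_1S^0$ for the $J$-map; by Proposition~\ref{abelianE2} it induces a map on unstable Adams $E_2$-terms relating $\Ext_{\cu}(\Sigma^{-1}\ind H^*(SO))$ and $\Ext_{\cu}(\Sigma^{-1}\mathfrak{N})$, the latter being the $E_2$ of the CWSS. The first task is to identify the source with the width-one quotient. I claim $\ind H^*(SO;\F_2)\cong D_1^o$ as unstable $\A$-modules: both are one-dimensional in each odd degree and zero elsewhere, and if $e_{2i+1}\in H^{2i+1}(SO)$ denotes the generator transgressing to $w_{2i+2}\in H^*(BSO)$, then Kudo transgression together with the Wu formula give $\Sq^a e_{2i+1}\equiv\binom{2i+1}{a}e_{2i+1+a}$ modulo decomposables, which is exactly the action $\Sq^a d_1^{2i+1}=\binom{2i+1}{a}d_1^{2i+1+a}$ on $D_1^o$. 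This is the undelooped analogue of Proposition~\ref{shiftedD1}, now with no degree shift, so it can be proved directly or through the $\lambda$-$\Omega$ machinery of Proposition~\ref{injectiveLambdaExtIso}. Under this identification the ``reduction to $D_1^o$'' is the restriction $\iota^*\colon\Ext_{\cu}(\Sigma^{-1}\mathfrak{N})\to\Ext_{\cu}(\Sigma^{-1}D_1^o)$ along $D_1^o=F_1\hookrightarrow\mathfrak{N}$, and the conjecture becomes the statement that the $J$-induced map $\bar J^{\,*}\colon\Ext_{\cu}(\Sigma^{-1}D_1^o)\to\Ext_{\cu}(\Sigma^{-1}\mathfrak{N})$ is the canonical section of $\iota^*$ predicted by reduction.

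Second, I would compute $J$ on cohomology indecomposables, namely the $\A$-module map $\bar J\colon\mathfrak{N}\to\ind H^*(SO)\cong D_1^o$. Dualizing the Barratt--Priddy--Quillen and Kudo--Araki--Dyer--Lashof description of the image of $J$ inside $H_*(Q_1S^0;\F_2)$ (the single-operation classes, cf.\ \cite{CLM} and compared against Wellington's homological charts \cite{Wellington}) lets one read $\bar J$ off each skyline generator. I expect two features. On the width-one generators $\gamma_1^{2i+1}$ the map $\bar J$ should be the identity of $D_1^o$ (any $\A$-endomorphism of $D_1^o$ is $0$ or the identity, and $J$ is an isomorphism onto the Kahn--Priddy/width-one part in these degrees), so that $\iota^*\circ\bar J^{\,*}=\mathrm{id}$ and $\bar J^{\,*}$ is \emph{a} section of $\iota^*$. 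But $\bar J$ should also be nonzero on certain single columns of width $\ge 2$ --- this is precisely the observed failure of $J^*$ to be the honest width-one reduction on cohomology. Because the Steenrod action on $\mathfrak{N}$ never raises width (Theorem~\ref{SteenrodAction}), one cannot split $\bar J$ by width as a map of $\A$-modules; instead, the long exact sequence of $0\to F_1\to\mathfrak{N}\to\mathfrak{N}/F_1\to 0$ shows that the difference between $\bar J^{\,*}$ and the reduction's section lands in the image of $\Ext_{\cu}(\Sigma^{-1}\mathfrak{N}/F_1)\to\Ext_{\cu}(\Sigma^{-1}\mathfrak{N})$, i.e.\ in the higher-width part. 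The conjecture is equivalent to the vanishing of this difference.

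The heart of the proof is therefore to show that this higher-width correction is $\Ext$-trivial. In the lowest degrees it vanishes automatically: the correction lives in filtration $\ge 2$, and $\Sigma^{-1}\mathfrak{N}/F_1$ has its bottom class (from the width-two generator $\gamma_{1[2]}$) in positive degree, so on $\Ext^0$ and in a range the comparison is forced --- this matches the author's low-degree evidence. To propagate upward in $s$ I would exploit two special structures. First, $D_1^o$ is $\Sq_0$-trivial, since $\Sq_0 d_1^{2i+1}=(d_1^{2i+1})^2=0$ in $D_1^o$; thus $\lambda_{D_1^o}=0$ and $D_1^o\cong\Sigma\Omega D_1^o$, so $\Ext_{\cu}(\Sigma^{-1}D_1^o)$ can be computed entirely through the functor $\Omega$ of Proposition~\ref{injectiveLambdaExtIso}, and maps into such a module are correspondingly constrained. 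Second, $J$ is an infinite loop map, so $\bar J$ is compatible with the Dyer--Lashof/Frobenius structure on $H^*(Q_1S^0)$; this should force the higher-width correction to land in the part of $D_1^o$ annihilated by the $\Omega$-reduction, giving $\Ext$-triviality after a minimal-resolution bootstrap.

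The main obstacle is exactly this last step. The correction is a genuinely nonzero map of unstable modules, so no formal argument makes it vanish on $\Ext$; one must combine the precise Dyer--Lashof combinatorics of the image of $J$ with the width filtration of Theorem~\ref{WidthSS} to prove the triviality, explaining why the width-$\ge 2$ classes that $\bar J$ detects --- nonzero in $\ind H^*(SO)$ --- never contribute to $\Ext_{\cu}$. Upgrading the author's finite calculations to all degrees is where the real work lies. A cleaner route, worth pursuing in parallel, would be to produce a homotopy-theoretic factorization of $J$ through a width-one object, for instance a suitable Kahn--Priddy map out of $B\Z/2$ (whose cohomology is exactly the single-column, width-one part); the reduction would then follow by naturality of the width spectral sequence, bypassing the module-level discrepancy altogether.
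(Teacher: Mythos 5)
The statement you are addressing is stated in the paper as a \emph{conjecture}: the author offers only evidence (explicit computation of $J^*$ on cohomology indecomposables through degree $15$, plus a compatible choice of presentation of $\mathfrak{N}$ in low degrees), not a proof. Your framing matches the paper's quite closely: you identify $\ind H^*(SO)$ with $D_1^o$ (the undelooped analogue of Proposition~\ref{shiftedD1}), you compute that $\bar J\colon\mathfrak{N}\to D_1^o$ is \emph{not} the width-one reduction on the nose because some width-$\ge 2$ columns (e.g.\ $\gamma_{2[1]}$ and $\gamma_{1[2]}^i\gamma_{2[1]}$) map nontrivially, and you reduce the conjecture to showing that this higher-width discrepancy dies on $\Ext$. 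That is exactly the paper's point of view. But this reduction is a restatement of the conjecture, not progress on it: the $\Ext$-triviality of the correction term \emph{is} the conjecture, and your proposal leaves it open (as you acknowledge). So there is a genuine gap, and it coincides with the gap the paper itself leaves.

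Beyond that, the two tools you propose for closing the gap do not work as stated. First, the $\lambda$/$\Omega$ argument: you note $\Sq_0 d_1^{2i+1}=(d_1^{2i+1})^2=0$ in $D_1^o$, so $\lambda_{D_1^o}=0$; but $\Phi D_1^o$ is nonzero (it is nonzero in degrees $\equiv 2 \bmod 4$), so $\lambda_{D_1^o}$ is \emph{not injective}, $\Omega_1 D_1^o\neq 0$, and Proposition~\ref{injectiveLambdaExtIso} does not apply. Moreover the module that actually appears in the $\Ext$ groups is $\Sigma^{-1}D_1^o$, for which $\Sq_0$ on $d_1^{2i+1}$ (now in degree $2i$) is $\Sq^{2i}d_1^{2i+1}=d_1^{4i+1}\neq 0$, so $\lambda_{\Sigma^{-1}D_1^o}$ is injective rather than zero --- this is precisely what makes $\Sigma^{-1}D_1^o$ appear as $\Omega$ of $\Sigma^{-1}\ind H^*(BO)$ in the paper, and it is the opposite of the constraint you want. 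Second, the appeal to ``$J$ is an infinite loop map'' is delicate: $J\colon SO\to Q_1S^0$ is an infinite loop map only for the \emph{multiplicative} ($\odot$/composition) structure on $Q_1S^0$, not the additive one that underlies the Kudo--Araki--Dyer--Lashof basis and the skyline presentation, so compatibility with the relevant Dyer--Lashof operations is not automatic. Your closing suggestion --- factoring through a Kahn--Priddy-type width-one object and invoking naturality of the width spectral sequence --- is the most promising route, but it is a proposal for future work, not an argument.
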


Evidence for this conjecture comes from hand calculations made out through degree 15, which we provide below.  
We will show that the image of $J$ on cohomology is itself not reduction to $D_1^o$, but it
seems to induce the same map on $\Ext$ as that reduction.  We further speculate that 
the induced map on spectral sequence $E_2$ terms is algebraic, given by the map induced on $\Ext$ induced by the map on cohomology.   Such a
phenomenon is rare (not occurring for example for maps between spheres) and we can present no evidence, only hope as that might then point to a new vantage
point from which to understand the chromatic filtration.

To make our calculations of this induced map on $\Ext$ we turn our attention to the basis for the homology of $SO$ presented by Hatcher \cite{Hatcher}, 
\[ H_*(SO, \Z) \cong E[e^1, \ldots, e^n, \ldots] \]
and the basis for $Q_1S^0$ in Milgram \cite{Milgram}, 
\[ H_*(Q_1S^0, \Z_2) \cong E(g_1, \ldots, g_i, \ldots) \otimes P(g_I) \]
where $I = (i_1, \ldots, i_m)$ and runs over all sequences of integers $0 \le i_1 \le \cdots \le i_m$ and $i_1 = 0$ implies $m = 2$ and $i_2 > 0$. 
Under the standard equivalence between different components of $Q S^0$, this corresponds to the Kudo-Araki-Dyer-Lashof basis by 
\begin{align*}
g_i \otimes 1 & \mapsto q_i(\iota)*\overline{\iota} \\
 1\otimes g_{0,i} &\mapsto q_i^2(\iota)*\overline{\iota}^3\\
 1 \otimes g_I &\mapsto q_I(\iota)*\overline{\iota}^{2^m-1}\\
 \end{align*}
  Then, the homology map induced by the $J$ homomorphism is $J_*: H_*(SO) \to H_*(Q_1S^0)$, defined by $ e_i \mapsto g_i \otimes 1 = q_i(\iota)*\overline{\iota}$. 

We can now use this and what we know of the pairings between homology and cohomology for both $SO$ and $Q_1S^0$ to see what we can understand about the induced map $J^*: \Sigma^{-1} \ind H*(Q_1S^0) \to \Sigma^{-1} \ind H^*(SO)$. We use the basis from Hatcher \cite{Hatcher}
\[ H^*(SO, \F_2) \cong \bigotimes_{i \text{ odd}} \F_2[\beta_i] \] 
where $\beta_i$ is the linear dual to $e^i$, and the skyline basis from \cite{GSS}. To work out the pairing between homology and cohomology for $SO$, we inductively use the Hopf algebra structure. We present pairing matrices in degree 11 and degree 15 to give an idea of what they look like in the Appendix. 

To work out the induced cohomology map on indecomposables, we go degree by degree through odd degrees only since indecomposables in $H^*(SO)$ are only in odd degrees.

In degree 1, the map is clear and $\g_{1[1]} \mapsto \beta_{1}$. \\

In degree 3, we have $e^3$ and $e^2 \wedge e^1$ in $H_*(SO)$. We then calculate 
\[ e^3 \mapsto q_3(\iota)*\bi \]
and
\[ e^2 \wedge e^1 \mapsto (q_{2}(\iota)*\bi) \circ (q_1(\iota)*\bi) = q_{1,1}(\iota)*\bi^3 + q_2(\iota)*q_1(\iota)*\bi^3 \] 
Thus, 
\[ J^*(q_3^*) = J^*(\g_{1[1]}^3) = (e^3)^* = \beta_3 \]
\[ J^*(q_{1,1}^*) = J^*(\g_{2[1]}) = (e^2\wedge e^1)^* = \beta_3 + \beta_1^3 \]
and 
\[ J^*((q_2*q_1)^*) = J^*(\g_{1[1]}^2\odot \g_{1[1]}) =(e^2\wedge e^1)^* = \beta_3 + \beta_1^3. \]

We only need to understand the map on indecomposables, which is just
\[ \g_{1[1]}^3 \mapsto \beta_3 \text{ and } \g_{2[1]} \mapsto \beta_3. \]  
As $\g_{2[1]}$ maps non-trivially on indecomposables, the map $J^*$ itself is not reduction to $D_1^o$.  But
We will see these give the same map on $\Ext$.

Indecomposables in $H^*(Q_1S^0)$ of odd degree only pair nontrivially with classes in $H_*(Q_1S^0)$ that are not nontrivial products. Thus, like we saw in this case with $q_2 *q_1$, any nontrivial $*$ products will be linear dual to only decomposables. From now on, we will ignore all products of 2 or more terms in the image of $J_*$. 

We performed calculations like these for each odd degree up to 15. Here we present only the degree 15 calculation as the others follow the same routine. In degree 15, homology classes can be products of up to five different $e^i$. However, using the $H_*(SO) / H^*(SO)$ pairing matrices, we will calculate that all products of three, four, or five $e^i$ are dual to only decomposables. The induced map on homology for products of one or two elements of total degree 15 is
\begin{align*}
 e^{15} &\mapsto q_{15} \\
e^{14} \wedge e^1 &\mapsto q_{1,7}+ \text{ products }\\
e^{13} \wedge e^2 &\mapsto q_{1,7}+ \text{ products }\\
e^{12} \wedge e^3 &\mapsto q_{1,7}+q_{3,6}+ \text{ products }\\
e^{11} \wedge e^4 &\mapsto q_{1,7}+ \text{ products }\\
e^{10} \wedge e^5 &\mapsto q_{1,7}+q_{3,6} + q_{5,5} + \text{ products }\\
e^{9} \wedge e^6 &\mapsto q_{1,7}+q_{5,5}+ \text{ products }\\
e^{8} \wedge e^7 &\mapsto q_{1,7} + \text{ products }
\end{align*}
From the $H_*/H^*$ pairing on $SO$, we have
\begin{align*}
(e^{15})^* &= \beta_{15} \\
(e^{14} \wedge e^1 )^* &= \beta_7^2\beta_1 + \beta_{15} \\
(e^{13} \wedge e^2 )^* &= \beta_{13}\beta_1^2 + \beta_{15} \\
(e^{12} \wedge e^3 )^* &= \beta_3^5 + \beta_{15} \\
(e^{11} \wedge e^4 )^* &= \beta_{11}\beta_1^4 + \beta_{15} \\
(e^{10} \wedge e^5 )^* &= \beta_5^3 + \beta_{15} \\
(e^{9} \wedge e^6 )^* &= \beta_9\beta_3^2 + \beta_{15} \\
(e^{8} \wedge e^7 )^* &= \beta_1^8\beta_7 + \beta_{15} 
\end{align*}
This combined with the homology maps give:
\begin{align*}
(q_{15})^* = \g_1^{15} &\mapsto \beta_{15} \\
(q_{1,7})^* = \g_{1[2]}^6\g_{2[1]}+\g_{2[1]}^5  &\mapsto (e^{14} \wedge e^1 )^*+(e^{13} \wedge e^2 )^*+(e^{12} \wedge e^3 )^*+(e^{11} \wedge e^4 )^*+(e^{10} \wedge e^5 )^* \\ &\quad\quad+(e^{9} \wedge e^6 )^* +(e^{8} \wedge e^7 )^*\\
& = \beta_7^2\beta_1+\beta_{13}\beta_1^2 + \beta_3^5 + \beta_{11}\beta_1^4 + \beta_5^3 + \beta_9 \beta_3^2 + \beta_1^8\beta_7 +\beta_{15} \\
(q_{3,6})^* = \g_{1[2]}^3\g_{2[1]}^3+\g_{2[1]}^5 &\mapsto (e^{12} \wedge e^3 )^* +  (e^{10} \wedge e^5 )^*  =\beta_3^5 +  \beta_5^3  \\
(q_{5,5})^* = \g_{2[1]}^5  &\mapsto (e^{10} \wedge e^5 )^* +(e^{9} \wedge e^6 )^* = \beta_5^3 +\beta_9\beta_3^2  
\end{align*}
So, up to decomposables: 
\[ \g_1^{15} \mapsto \beta_{15}, \quad \g_{1[2]}^6\g_{2[1]} \mapsto \beta_{15}, \quad \g_{1[2]}^3\g_{2[1]}^3 \mapsto 0, \quad \g_{2[1]}^5\mapsto 0, \quad \g_{1[4]}^2\g_{3[1]} \mapsto 0, \quad \g_{4[1]} \mapsto 0. \]

In general, it appears that the induced map on cohomology is given by the following conjecture. 
\begin{conjecture}\label{conj:cohomMap}
The map on cohomology indecomposables induced by the $J$ homomorphism is:
\[ \g_{1[1]}^{2k+1} \mapsto \beta_{2k+1} \]
\[ \g_{1[2]}^{i}\g_{2[1]} \mapsto \beta_{2i+3} \]
and all other indecomposables map to 0. 
\end{conjecture}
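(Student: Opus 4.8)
The plan is to prove Conjecture~\ref{conj:cohomMap} by reducing the computation of $J^*$ on indecomposables to a family of homology pairings, and then evaluating those pairings through the Giusti--Salvatore--Sinha duality. Since $H^*(SO,\F_2)=\bigotimes_{i\text{ odd}}\F_2[\beta_i]$ with $\beta_m$ dual to $e^m$, the monomials in the $\beta_i$ and the $*$-monomials in the $e^i$ form dual bases (the Poincar\'e series agree by Euler's partition identity); in particular the single generator $e^m$ is dual to $\beta_m$, while every $*$-product of two or more $e^i$ is dual to a decomposable. Hence the indecomposable part of $J^*(\alpha)$ is read off only against the $e^m$, giving the closed formula
\[ J^*_{\mathrm{ind}}(\alpha)=\sum_{m\text{ odd}}\langle\alpha,\,J_*(e^m)\rangle\,\beta_m=\sum_{m\text{ odd}}\langle\alpha,\,q_m(\iota)*\bi\rangle\,\beta_m, \]
using $J_*(e^m)=q_m(\iota)*\bi$. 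Thus Conjecture~\ref{conj:cohomMap} is equivalent to the purely algebraic statement that, for $\alpha$ a single-column (hence indecomposable, by Theorem~\ref{cohomBSinfty}) skyline class and $m$ odd, $\langle\alpha,\,q_m(\iota)*\bi\rangle$ equals $1$ exactly when $\alpha=\g_{1[1]}^m$ or $\alpha=\g_{1[2]}^{(m-3)/2}\g_{2[1]}$ and vanishes otherwise. This bypasses the $*$-product expansions used in the degree-by-degree calculations, since only the single classes $q_m(\iota)*\bi$ are needed.

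To evaluate these pairings I would use that the loop-sum product $*$ is dual to the Hopf-ring coproduct $\Delta$ of Theorem~\ref{GSSHopfRing}, so that
\[ \langle\alpha,\,q_m(\iota)*\bi\rangle=\langle\Delta\alpha,\,q_m(\iota)\otimes\bi\rangle. \]
The class $q_m(\iota)$ is the image of the degree-$m$ generator of $H_*(B\si_2)$ under the Dyer--Lashof structure map, so the factor pairing against it detects the restriction of $\alpha$ to a $\si_2$, i.e.\ the coefficient of $\g_{1[1]}^m$ in $H^*(B\si_2)=\F_2[\g_{1[1]}]$. For $\alpha=\g_{1[1]}^m$ this is immediate and produces the diagonal term $\beta_m$. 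The width-two term $\g_{1[2]}^{(m-3)/2}\g_{2[1]}$ must come instead from the component-shifting factor $\bi$: translating by $*\bi$ is dual to a nontrivial piece of $\Delta$, whose effect I would pin down using the coproduct formula $\Delta\g_{\ell[n]}=\sum_{i+j=n}\g_{\ell[i]}\otimes\g_{\ell[j]}$ together with the restriction-to-$V_n$ identification of Corollary~7.6 of \cite{GSS} (namely $\g_{\ell[2^k]}|_{V_n}=d_{k,\ell}$), which converts the relevant coproduct components into pairings against Dickson classes. The aim is to show these two families are the only single columns that survive.

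The main obstacle, and the reason the statement is still conjectural, is the uniform control of the width-two correction together with the vanishing on all higher widths. The subtlety is precisely that $J^*$ is \emph{not} the reduction to $D_1^o$ on cohomology: the off-diagonal pairing with $\g_{1[2]}^{(m-3)/2}\g_{2[1]}$ is genuinely nonzero, so no crude width/length filtration count can close the argument. What is needed is a closed-form description of how the group-completion class $\bi$ expands against the Hopf-ring coproduct, strong enough to prove both that the unique off-diagonal survivor is this single width-two column and that the pairing of $q_m(\iota)*\bi$ with every single column of width $\ge 4$ (and with the remaining width-two columns) vanishes. Our evidence is the agreement of the explicit pairing computations through degree $15$; converting that pattern into an induction on $m$ --- presumably organized by the interaction of the Nishida relations with $\Delta$ --- is the step I expect to be hardest, and carrying it out would establish the conjecture.
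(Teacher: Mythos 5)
First, note that the paper does not prove this statement --- it is offered as a conjecture supported only by explicit calculations through degree $15$ --- so no proof attempt can be checked against a proof in the paper; what can be checked is whether your proposed reduction is sound. It is not: it fails at the very first step. Your formula $J^*_{\mathrm{ind}}(\alpha)=\sum_{m\text{ odd}}\langle\alpha, J_*(e^m)\rangle\,\beta_m$ rests on the claim that the exterior monomials in the $e^i$ and the polynomial monomials in the $\beta_i$ are dual bases, so that every $*$-product of two or more $e^i$ is dual to a decomposable. The paper's pairing matrices show this is false: for instance $(e^2\wedge e^1)^*=\beta_3+\beta_1^3$ and $(e^{14}\wedge e^1)^*=\beta_7^2\beta_1+\beta_{15}$, each containing an indecomposable summand. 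Equivalently, $Q H^*(SO)$ is dual to the \emph{primitives} of $H_*(SO)$, and $e^m$ is not primitive; the primitive in degree $m$ is a Newton-polynomial expression involving the products $e^{a_1}\wedge\cdots\wedge e^{a_k}$, so $J_*$ of those products cannot be discarded. Concretely, your formula already gives the wrong answer in degree $3$: since $\g_{2[1]}=q_{1,1}^*$ pairs to $0$ with $q_3(\iota)*\bi=J_*(e^3)$, your formula predicts $J^*_{\mathrm{ind}}(\g_{2[1]})=0$, contradicting both the paper's direct computation ($J^*(\g_{2[1]})=(e^2\wedge e^1)^*=\beta_3+\beta_1^3$, hence $\beta_3$ on indecomposables) and the $i=0$ case of the very conjecture you are trying to prove.

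The attempted repair --- attributing the width-two terms $\g_{1[2]}^i\g_{2[1]}\mapsto\beta_{2i+3}$ to the coproduct behavior of the component-shifting class $\bi$ --- does not help, because $q_m(\iota)*\bi$ is a single monomial basis element and a single-column class pairs with it nontrivially only when it equals $\g_{1[1]}^m$. The off-diagonal family in the conjecture arises precisely from the two-fold products $e^a\wedge e^b$ whose dual basis elements contain $\beta_{2i+3}$ as a summand (this is what the degree-$11$ and degree-$15$ tables in the Appendix encode), i.e., exactly from the terms your reduction throws away. Any viable approach must either work with the genuine primitives of $H_*(SO)$ or, as the paper does, invert the full triangular pairing matrix degree by degree; the honest difficulty you identify at the end (uniform control in all degrees) is real, but it sits on top of a reduction that is already incorrect.
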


As mentioned above,  at the level of modules the induced map $J^*$  is not just reduction to $D_1^o$. However, it appears that we do see this at the level of resolutions. 

\begin{conjecture}
There exists a presentation of the Nakaoka module so that all generators other than $\g_{1[1]}^{2^k-1}$ go to 0 under the map induced by the $J$ map. 
\end{conjecture}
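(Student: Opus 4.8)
The plan is to deduce the conjecture as a soft homological consequence of two concrete facts about the induced map $\ind J^*\colon\mathfrak{N}\to\ind H^*(SO)$, where we identify $\mathfrak{N}\cong\ind H^*(Q_1S^0)$ under the standard equivalence of components. First I would pin down the target as an unstable $\A$-module. Since $H^*(SO)=\bigotimes_{i\text{ odd}}\F_2[\beta_i]$, its indecomposables are $\ind H^*(SO)=\F_2\{\beta_{2k+1}\}$, concentrated in odd degrees just as $D_1^o=\F_2\{d_1^{2k+1}\}$ is. Controlling the reduced Steenrod action through the transgression that carries the $\beta_i$ to Stiefel--Whitney classes, one checks that $\ind H^*(SO)\cong D_1^o$ via $\beta_{2k+1}\leftrightarrow d_1^{2k+1}$ --- the odd-degree analogue of the $BO$ comparison in Proposition \ref{shiftedD1}. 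Then the same computation with Lucas' theorem that singles out $d_1^{2^k-1}$ as the minimal $\A$-generators of $D_1^o$ shows that the minimal $\A$-generators of $\ind H^*(SO)$ are exactly the classes $\beta_{2^k-1}$.

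Second, I would isolate the single input from Conjecture \ref{conj:cohomMap} that the argument actually needs: that $J^*(\g_{1[1]}^{2k+1})=\beta_{2k+1}$ for every $k$. This follows by dualizing the homology formula $e^i\mapsto q_i(\iota)*\bi$, so that the column class $\g_{1[1]}^{2k+1}=(q_{2k+1})^*$ pairs with $(e^{2k+1})^*=\beta_{2k+1}$. In particular $\ind J^*$ is surjective onto $\ind H^*(SO)$, and the images $\ind J^*(\g_{1[1]}^{2^k-1})=\beta_{2^k-1}$ are precisely a minimal $\A$-generating set of the target.

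With these inputs I would set $K=\ker(\ind J^*)$, giving a short exact sequence $0\to K\to\mathfrak{N}\xrightarrow{\ind J^*}\ind H^*(SO)\to 0$ in $\cu$, choose any $\A$-generating set $\{g_i\}$ of $K$, and form the set $\{\g_{1[1]}^{2^k-1}:k\ge 1\}\cup\{g_i\}$. This generates $\mathfrak{N}$ by the standard fact that generators of a submodule together with lifts of generators of the quotient generate the total object: for $m\in\mathfrak{N}$, $\ind J^*(m)$ is an $\A$-combination of the $\beta_{2^k-1}$, so subtracting the corresponding combination of the $\g_{1[1]}^{2^k-1}$ moves $m$ into $K$, where it is an $\A$-combination of the $g_i$; this argument is valid verbatim in the abelian category $\cu$. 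By construction every generator other than the $\g_{1[1]}^{2^k-1}$ lies in $K$ and is killed by $J^*$, while $\g_{1[1]}^{2^k-1}\mapsto\beta_{2^k-1}$, which is exactly the claimed behavior.

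The hard part will be making the input $J^*(\g_{1[1]}^{2k+1})=\beta_{2k+1}$ rigorous for all $k$ rather than only in the range of the hand calculations; this needs a uniform handling of $J_*$ together with the Kudo--Araki--Dyer--Lashof/skyline duality, so as to certify that the single column $\g_{1[1]}^{2k+1}$ pairs only with $e^{2k+1}$ and not with the products of $e^i$ that must be discarded as decomposables. A secondary and more structural difficulty is that the soft argument produces only \emph{some} such presentation; to feed the $\Ext$ comparison of Conjecture \ref{imJ conjecture intro} one wants a \emph{minimal} presentation, which requires controlling the indecomposables of $K$ inside $\mathfrak{N}/\A^+\mathfrak{N}$ --- essentially an explicit hold on the kernel, and hence on the full Conjecture \ref{conj:cohomMap}. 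Thus existence of a presentation with the stated property is cheap once $\ind J^*$ is known on the $\g_{1[1]}$-tower, while the minimal presentation that would actually establish the $\Ext$ statement is where the genuine work remains.
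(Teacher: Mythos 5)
The statement you are proving is labeled a \emph{conjecture} in the paper, and the paper offers no proof of it: it only exhibits, through degree $5$, an explicit choice of generators of $\mathfrak{N}$ (namely $\g_{1[1]}$, $\g_{1[2]}$, $\g_{1[1]}^3$, $\g_{1[4]}$, with $\g_{1[1]}^3+\g_{2[1]}$ and $\g_{1[1]}^5+\g_{1[2]}\g_{2[1]}$ absorbed into the $\A$-span of $\g_{1[2]}$) and checks directly that the non-$\g_{1[1]}^{2^k-1}$ generators die. Your argument is therefore not comparable to a proof in the paper; what it is, is a clean formal reduction that the paper does not make explicit. The homological skeleton is sound: $\ind H^*(SO)\cong D_1^o$ as unstable modules (consistent with Proposition \ref{shiftedD1} via $\Omega$), Lucas' theorem identifies $\beta_{2^k-1}$ as the minimal $\A$-generators of the target, and the ``generators of the kernel plus lifts of generators of the quotient'' device legitimately manufactures a generating set in which everything outside the $\g_{1[1]}^{2^k-1}$ family lies in $\ker(\ind J^*)$. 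A genuinely nice feature of your route is that it needs \emph{only} surjectivity of $\ind J^*$ realized on the $\g_{1[1]}$-tower, not the full vanishing statement of Conjecture \ref{conj:cohomMap}.

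The gap is exactly where you place it, and it is not cosmetic: the input $\ind J^*(\g_{1[1]}^{2k+1})=\beta_{2k+1}$ for all $k$ is itself part of Conjecture \ref{conj:cohomMap}, which the paper supports only through degree $15$ by hand. To certify it you must show that $(q_{2k+1})^*$ pairs trivially with $J_*(e^{i_1}\wedge\cdots\wedge e^{i_m})$ for all $m\ge 2$, i.e.\ that the $\circ$-product $(q_{i_1}*\bi)\circ\cdots\circ(q_{i_m}*\bi)$ never produces a length-one $q_{2k+1}$ summand; this requires the mixed distributivity formulas for $\circ$ against $*$ in $H_*(QS^0)$ and is where the real work lives. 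Until that is done your argument proves only ``Conjecture \ref{conj:cohomMap} (first line) implies the presentation conjecture,'' which is a worthwhile observation but not a proof. You are also right that for the intended application to Conjecture \ref{imJ conjecture intro} one wants a \emph{minimal} presentation, which your soft construction does not supply without additional control of the indecomposables of $\ker(\ind J^*)$.
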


To see this in low degrees, choose generators $\g_{1[1]}$ in degree 1, $\g_{1[2]}$ in degree 2, $\g_{1[1]}^3$ in degree 3, and $\g_{1[4]}$ in degree 4. Then Figure \ref{F:moduleDiag} illustrates one choice of presentation for the first 5 degrees of the Nakaoka module as an unstable module over the Steenrod algebra. From the map we calculated, $\g_{1[1]}^{2k-1}$ all map to $\beta_{2k-1}$ while $\g_{1[2]}$ and $\g_{1[4]}$ map to 0. Then, we see that since both $\g_{1[1]}^3$ and $\g_{2[1]}$ map to $\beta_3$ and both $\g_{1[1]}^5$ and $\g_{1[2]}\g_{2[1]}$ map to $\beta_5$ that both those elements map to 0. 

\begin{figure}[h]
\begin{center}
\begin{tikzpicture}[line cap=round,line join=round,x=1.0cm,y=1.0cm, scale=0.8]

\filldraw (0,4) circle (1pt);
\node[align=center, scale=0.75] at (0.3,4) {$ \gamma_{1[1]}^5$};

\draw (0,4) ..controls (-0.5,3)..  (0,2);

\filldraw (0,2) circle (1pt);
\node[align=center, scale=0.75] at (0.3,2) {$ \gamma_{1[1]}^3$};

\draw (0,2) ..controls (-0.5,1)..  (0,0);

\filldraw (0,0) circle (1pt);
\node[align=center, scale=0.75] at (0.3,0) {$ \gamma_{1[1]}$};

\filldraw (2,4) circle (1pt);
\node[ align=center, scale=0.75] at (3.2,4) {$ \gamma_{1[1]}^5+\gamma_{1[2]}\g_{2[1]}$};

\draw (2,3) -- (2,4);

\filldraw (2,3) circle (1pt);
\node[align=center, scale=0.75] at (2.3,3) {$ \gamma_{1[4]}$};

\draw (2,2) ..controls (1.5,3)..  (2,4);

\filldraw (2,2) circle (1pt);
\node[align=center, scale=0.75] at (2.9,2) {$ \gamma_{1[1]}^3+\gamma_{2[1]}$};

\draw (2,1) -- (2,2);

\filldraw (2,1) circle (1pt);
\node[align=center, scale=0.75] at (2.3,1) {$ \gamma_{1[2]}$};

\end{tikzpicture}
\caption{Diagram of $\mathfrak{N}$ as an unstable $\A$ module. }
\label{F:moduleDiag}
\end{center}
\end{figure}
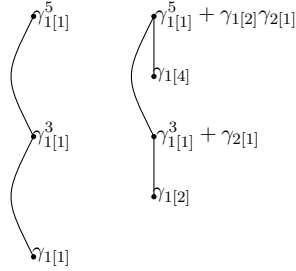

\newpage

\section{Appendix}\label{Appendix}

\subsection{The width spectral sequence.}  Through  degree 16, there are no possible differentials in the width spectral sequence so this  gives the $E_2$ of the CWSS
\begin{center}
\begin{figure}[bp]
\includegraphics[scale = 0.55]{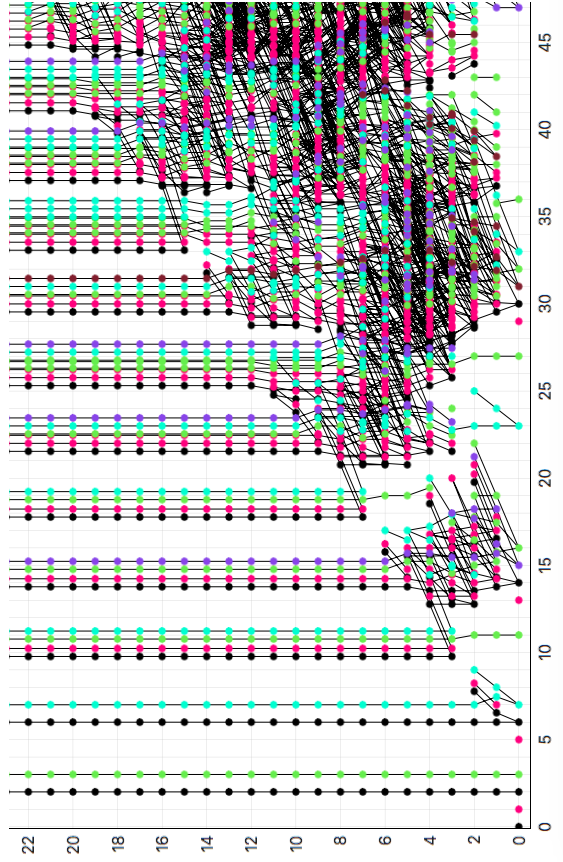}
\caption{$E_1$ page of the width spectral sequence. Black corresponds to $D_1$, red to $D_2$, green to $D_3$, teal to $D_4$, purple to $D_5$, and brown to $D_6$.}
\end{figure}
\label{F:FullChart}
\end{center}

 \subsection{Pairing Matrices for the homology and cohomology of $SO$} Here $e^{a_1,\ldots, a_n}$ corresponds to $e^{a_1} \wedge \cdots \wedge e^{a_n}$. We present only the matrices for degrees 11 and 15, both these and all others follow from the Hopf algebra structure. \\
 
Degree 11: \\

\begin{tabular}{c|c c c c c c c c c c c c }
& $e^{5 , 3 , 2 , 1}$ & $e^{8, 2 , 1}$ & $e^{7 , 3 , 1}$ & $e^{6 , 4 , 1}$ & $e^{6 , 3, 2}$ & $e^{5 , 4 , 2}$ &  $e^{10 , 1}$ & $e^{9, 2}$ & $e^{8 , 3}$ & $e^{7 , 4}$ & $e^{6 , 5}$ & $e^{11}$ \\
\hline
$\beta_5\beta_3\beta_1^3$ 	&1& 1 & 1 & 0 & 1 & 1 & 1 & 1 & 0 & 1 & 0 & 1 \\
$\beta_1^{11}$				& & 1& 0 & 0 & 0 & 0 & 1 & 1 & 1 & 0 & 0 & 1\\
$\beta_7\beta_3\beta_1$ 		&  &  & 1 & 0 & 0 & 0 & 1 & 0 & 1 & 1 & 0 & 1\\
$\beta_3^2\beta_1^5$ 		&  &  &  & 1 & 0 & 0  & 1 & 0 & 0 & 1 & 1 & 1 \\
$\beta_3^3\beta_1^2$ 		&  &  &  &  & 1 & 0 & 0 & 1 & 1 & 0 & 1 & 1 \\
$\beta_5\beta_1^6$ 			&  &  &  &  &  & 1 & 0 & 1 & 0 & 1 & 1 & 1 \\
$\beta_5^2 \beta_1$ 		&  &  &  &  &  &  & 1 & 0 & 0 & 0 & 0 & 1 \\
$\beta_9 \beta_1^2$ 		&  &  &  &  &  &  &  & 1 & 0 & 0 & 0 & 1 \\
$\beta_1^8 \beta_3$ 		&  &  &  &  &  &  &  &  & 1 & 0 & 0 & 1 \\
$\beta_7 \beta_1^4$ 		&  &  &  &  &  &  &  &  &  & 1 & 0 & 1 \\
$\beta_3^2 \beta_5$ 		&  &  &  &  &  &  &  &  &  &  & 1 & 1 \\
$\beta_{11}$ 				&  &  &  &  &  &  &  &  &  &  &  & 1 \\
\end{tabular}
\newpage

Degree 15: \\

In this degree the table is too large to fit on one page, so we break it into three separate charts. In the cases where an element of $H^*(SO)$ pairs trivially with all elements of $H_*(SO)$ in the table we do not include it in the matrix. \\

\begin{tabular}{c| c c c c c c c}
& $e^{5,4,3,2,1}$ & $e^{9,3,2,1}$ & $e^{8,4,2,1}$ & $e^{7,5,2,1}$ & $e^{7,4,3,1}$ & $e^{6,5,3,1}$ & $e^{6,4,3,2}$ \\
\hline
$\beta_1^7\beta_3\beta_5$ 	& 1&1&1&1&1&1&1\\
$\beta_1^3\beta_3\beta_9$ 	& &1&0&0&0&0&0\\
$\beta_1^{15}$ 				& &&1&0&0&0&0\\
$\beta_1^3\beta_5\beta_7$ 	& &&&1&0&0&0\\
$\beta_1^5\beta_3\beta_7$	& &&&&1&0&0\\
$\beta_1\beta_3^3\beta_5$ 	& &&&&&1&0\\
$\beta_1^6\beta_3^3$ 		& &&&&&&1\\
\end{tabular} \\

\begin{tabular}{c|c c c c c c c c c c c c}
& $e^{12 , 2 , 1}$ & $e^{11 , 3 , 1}$ & $e^{10 , 4 , 1}$ & $e^{10 , 3, 2}$ & $e^{9 , 5 , 1}$ & $e^{9 , 4 , 2}$ & $e^{8 , 6 , 1}$ & $e^{8 , 5 , 2}$& $e^{8,4,3}$& $e^{7,6,2}$& $e^{7,5,3}$ & $e^{6,5,4}$  \\
\hline
$\beta_1^7\beta_3\beta_5$ 	& 1&1&1&1&0&0&1&0&0&1&0&1\\
$\beta_1^3\beta_3\beta_9$ 	& 1&1&0&1&1&1&0&0&0&0&0&0\\
$\beta_1^{15}$ 				& 1&0&1&0&0&1&1&1&1&0&0&0\\
$\beta_1^3\beta_5\beta_7$ 	& 1&0&0&0&1&0&0&1&0&1&1&0\\
$\beta_1^5\beta_3\beta_7$	& 0&1&1&0&0&0&0&0&1&0&1&0\\
$\beta_1\beta_3^3\beta_5$ 	& 0&1&0&0&1&0&1&0&0&0&1&1\\
$\beta_1^6\beta_3^3$ 		& 0&0&0&1&0&1&0&0&1&1&0&1\\
$\beta_1^3\beta_3^4$ 		& 1&0&0&0&0&0&0&0&0&0&0&0\\
$\beta_1\beta_3\beta_{11}$ 	& &1&0&0&0&0&0&0&0&0&0&0\\
$\beta_1^5\beta_5^2$ 		& &&1&0&0&0&0&0&0&0&0&0\\
$\beta_1^2\beta_3\beta_{11}$ 	& &&&1&0&0&0&0&0&0&0&0\\
$\beta_1\beta_5\beta_9$ 		& &&&&1&0&0&0&0&0&0&0\\
$\beta_1^6\beta_9$ 			& &&&&&1&0&0&0&0&0&0\\
$\beta_1^{9}\beta_3^2$ 		& &&&&&&1&0&0&0&0&0\\
$\beta_1^{10}\beta_4$ 		& &&&&&&&1&0&0&0&0\\
$\beta_1^{12}\beta_3$ 		& &&&&&&&&1&0&0&0\\
$\beta_1^2\beta_3^2\beta_7$ 	& &&&&&&&&&1&0&0\\
$\beta_3\beta_5\beta_7$ 		& &&&&&&&&&&1&0\\
$\beta_1^4\beta_3^2\beta_5$ 	& &&&&&&&&&&&1\\
\end{tabular}

\begin{tabular}{c|c c c c c c c c}
& $e^{14 , 1}$ & $e^{13, 2}$ & $e^{12 , 3}$ & $e^{11 , 4}$ & $e^{10, 5}$ & $e^{9 , 6}$ &$e^{8,7}$ &$e^{15}$ \\
\hline
$\beta_1^7\beta_3\beta_5$ 	& 1&1&0&0&1&1&1&1\\
$\beta_1^3\beta_3\beta_9$ 	& 1&1&0&1&1&1&0&1\\
$\beta_1^{15}$ 				& 1&1&1&1&1&1&1&1\\
$\beta_1^3\beta_5\beta_7$ 	& 1&1&1&0&1&1&0&1\\
$\beta_1^5\beta_3\beta_7$	& 1&0&1&0&1&0&0&1\\
$\beta_1\beta_3^3\beta_5$ 	& 1&0&1&1&1&0&1&1\\
$\beta_1^6\beta_3^3$ 		& 0&1&1&1&1&0&1&1\\
$\beta_1^3\beta_3^4$ 		& 1&1&1&0&0&0&0&1\\
$\beta_1\beta_3\beta_{11}$ 	& 1&0&1&0&1&0&0&1\\
$\beta_1^5\beta_5^2$ 		& 1&0&0&1&1&0&0&1\\
$\beta_1^2\beta_3\beta_{11}$ 	& 0&1&1&0&1&0&0&1\\
$\beta_1\beta_5\beta_9$ 		& 1&0&0&0&1&1&0&1\\
$\beta_1^6\beta_9$ 			& 0&1&0&1&0&1&0&1\\
$\beta_1^{9}\beta_3^2$ 		& 1&0&0&0&0&1&1&1\\
$\beta_1^{10}\beta_4$ 		& 0&1&0&0&1&0&1&1\\
$\beta_1^{12}\beta_3$ 		& 0&0&1&1&0&0&1&1\\
$\beta_1^2\beta_3^2\beta_7$ 	& 0&1&0&0&0&1&1&1\\
$\beta_3\beta_5\beta_7$ 		& 0&0&1&0&1&0&1&1\\
$\beta_1^4\beta_3^2\beta_5$ 	& 0&0&0&1&1&1&0&1\\
$\beta_1\beta_7^2$ 			& 1&0&0&0&0&0&0&1\\
$\beta_1^2\beta_{13}$ 		& &1&0&0&0&0&0&1\\
$\beta_3^5$ 				& &&1&0&0&0&0&1\\
$\beta_1^4\beta_{11}$ 		& &&&1&0&0&0&1\\
$\beta_5^3$ 				& &&&&1&0&0&1\\
$\beta_3^2\beta_9$ 			& &&&&&1&0&1\\
$\beta_1^8\beta_7$ 			& &&&&&&1&1\\
$\beta_{15}$ 				& &&&&&&&1\\
\end{tabular}
\newpage

\renewcommand{\refname}{\normalfont\selectfont\normalsize References} 

\nocite{*}

\bibliographystyle{unsrtnat}
 \bibliography{CurtisSSBib}

\end{document}